\documentclass[a4paper,11pt, reqno, 
]{amsart}

\usepackage[english]{babel}
\usepackage[utf8]{inputenc}

\usepackage{graphicx}
\usepackage{amscd}
\usepackage{amsfonts}
\usepackage{amsaddr}
\usepackage{latexsym}
\usepackage{amsthm}
\usepackage{amssymb,amsmath}
\usepackage{enumerate}
\usepackage{verbatim}

\usepackage{import}

\usepackage[titletoc,title]{appendix}

\usepackage[usenames]{color}

 \usepackage[colorlinks=true]{hyperref}
\hypersetup{urlcolor=blue, citecolor=red}

\widowpenalty10000
\clubpenalty10000

\usepackage{subfigure}

\usepackage{pgfplots}
\tikzset{ font={\fontsize{9pt}{12}\selectfont}}

\title[Rat. maps with {F}atou comp. of arbitrarily large connectivity]{\bf Rational maps with {F}atou components of arbitrarily large connectivity}

\author[Jordi Canela]{Jordi Canela}
\thanks{The  author was partially supported by ANR project Lambda, ANR-13-BS01-002.}
\address{Institut de Mathématiques de Toulouse\\
Université Paul Sabatier,\\
118, route de Narbonne, F-31062 Toulouse, France}
\email{Jordi.Canela\_Sanchez@math.univ-toulouse.fr}

\newtheorem{teor}{Theorem} [section]
\newtheorem{thm}[teor]{Theorem}
\newtheorem{propo}[teor]{Proposition}

\newtheorem{lemma}[teor]{Lemma}
\newtheorem{co}[teor]{Corollary}

\newtheorem*{teoremB}{Theorem B}
\newtheorem*{teoremA}{Theorem A}

\theoremstyle{definition}
\newtheorem{defin}[teor]{Definition}
\newtheorem{defi}[teor]{Definition}
\newtheorem{rem}[teor]{Remark}

\newsavebox{\savepar}

\newcommand{\com}{\mathbb{C}}
\newcommand{\wcom}{\widehat{\mathbb{C}}}

\newcommand{\real}{\mathbb{R}}

\newcommand{\nat}{\mathbb{N}}

\newcommand{\dis}{\mathbb{D}}
\newcommand{\cercle}{\mathbb{S}^1}

\makeatletter
\def\blfootnote{\gdef\@thefnmark{}\@footnotetext}
\makeatother

\date{\today}


\setlength{\paperwidth}{210mm}
\setlength{\paperheight}{297mm}
\setlength{\hoffset}{-1in}
\setlength{\voffset}{-1.75in}
\setlength{\textheight}{220mm}
\setlength{\textwidth}{155mm}
\setlength{\oddsidemargin}{32mm}
\setlength{\evensidemargin}{32mm}
\setlength{\topmargin}{5cm}
\setlength{\headheight}{12pt}
\setlength{\headsep}{25pt}
\setlength{\parskip}{0.2cm}
\setlength{\footskip}{1.2cm} 


%
    {\end{list}}


\begin{document}
\begin{abstract}
 {\noindent \small We study the family of singular perturbations of Blaschke products \linebreak $B_{a,\lambda}(z)=z^3\frac{z-a}{1-\overline{a}z}+\frac{\lambda}{z^2}$. We analyse how the connectivity of the Fatou components varies as we move continuously the parameter $\lambda$. We prove that all possible escaping configurations of the critical point $c_-(a,\lambda)$ take place within the parameter space. In particular, we prove that there are maps $B_{a,\lambda}$ which have Fatou components of arbitrarily large finite connectivity within their dynamical planes.}
\end{abstract}

\maketitle
\blfootnote{\textit{Keywords:} holomorphic dynamics, Blaschke products,  McMullen-like Julia sets, singular perturbations, connectivity of Fatou components.\\
2010 \textit{Mathematics Subject Classification:}  Primary: 37F45; Secondary: 37F10, 37F50, 30D05.}



\section{Introduction}

Given a rational map $f:\wcom\rightarrow \wcom$, where $\wcom=\com\cup\{0\} $ denotes the Riemann Sphere, we consider the discrete dynamical system provided by the iterates of $f$. This dynamical system splits $\wcom$ into two totally invariant sets, the Fatou set $\mathcal{F}(f)$, which is defined  as the set of points $z\in\wcom$ such that the family $\{f^n, n\in\com\}$ is normal in some neighbourhood of $z$,  and its complement, the Julia set $\mathcal{J}(f)$. The dynamics of the points $z\in\mathcal{F}(f)$ is stable in the sense of normality whereas the dynamics of the points $z\in\mathcal{J}(f)$ presents a chaotic behaviour. The Fatou set $\mathcal{F}(f)$ is open and, hence, $\mathcal{J}(f)$ is closed. The connected components of $\mathcal{F}(f)$ are called Fatou components and are mapped under $f$ among themselves. A Fatou component $U$ is called periodic if there exists $q\in\nat$ with $f^q(U)=U$ and preperiodic if there exists $q\in\nat$ such that $f^q(U)$ is periodic. All Fatou components of rational maps are either periodic or preperiodic (see \cite{Su}). Moreover, any cycle of periodic Fatou components of a rational map has at least a critical point, i.e.\ a point $z\in\wcom$ such that $f'(z)=0$, related to it. For a more detailed introduction to the dynamics of rational maps we refer to \cite{Bear} and \cite{Mi1}. 

The connectivity of a domain $D\subset\wcom$ is given by the number of connected components of its boundary $\partial D$. It is well known that periodic Fatou components have connectivity 1, 2 or $\infty$ (see \cite{Bear}). However, preperiodic Fatou components may have finite connectivity greater that 2. Beardon \cite{Bear} introduced an example suggested by Shishikura of a family of rational maps with Fatou components of finite connectivity greater than 2. Baker, Kotus and L{\"u} \cite{BKL} proved that, given any $n\in\nat$, there exists rational and meromorphic transcendental maps with preperiodic Fatou components of connectivity $n$ by means of a quasiconformal surgery procedure. Later on, Qiao and Gao \cite{QJ}, and Stiemer \cite{Sti} provided explicit examples of families of rational maps with such dynamical properties. However, the examples presented in \cite{BKL}, \cite{QJ} and \cite{Sti} use an increasing number of critical points. Consequently, the degree of these rational maps grows with $n$. 

In \cite{Can1} we introduced a family   of singularly perturbed Blaschke products (see Equation~(\ref{eqblasperturbed})) whose maps have, under certain dynamical conditions,  Fatou components of arbitrarily large finite connectivity (see Theorem~\ref{thmavell}). We also provided numerical evidence showing that parameters satisfying these conditions actually exist. The main goal in this paper is to give a rigorous proof of this fact, that is, to show that this family of perturbations of Blaschke products contains examples of rational maps having  Fatou components of arbitrarily large finite connectivity (in the same dynamical plane).  To our knowledge, there are no previous examples known to show this phenomenon.

The study of singular perturbations in holomorphic dynamics was introduced by  McMullen \cite{McM1} to prove the existence of Julia sets with buried components, i.e.\ connected components of $\mathcal{J}(f)$ which do not lie in the boundary of any Fatou component, that are Jordan curves. He considered the singular  perturbations of $z^n$ given by  

\begin{equation}\label{perturbedpolyn}
Q_{\lambda, n, d}(z)=z^n+\frac{\lambda}{z^d},
\end{equation}

\noindent where $\lambda\in\com^*=\com\setminus\{0\}$ and $n,d\in\nat$. These maps have $z=\infty$ as a superattracting fixed point. Moreover, they have $n+d$ critical points which appear symmetrically around the pole $z=0$. McMullen showed that, if $1/n+1/d<1$ and $|\lambda|$ is small enough then the Julia set consists of a Cantor set of quasicircles. A quasicircle is a Jordan curve which is the image of the unit circle under a quasiconformal map (see e.g.\ \cite{BF}). Devaney, Look and Uminsky \cite{DLU} studied the Julia sets of the maps $Q_{\lambda,n,d}$ from a more general point of view. They showed that if the critical orbits converge to $\infty$ then $\mathcal{J}(Q_{\lambda,n,d})$ is either a Cantor set of points, or a Cantor set of quasicircles, or a Sierpinski curve (a homeomorphic image of a Sierpinski carpet). Afterwards, the family $Q_{\lambda,n,d}$ (see e.g.\  \cite{Mora}, \cite{QWY}, \cite{QRWY}) and singular perturbations of polynomials other than $z^d$ (see e.g.\ \cite{BDGR} and \cite{GMR}) have been the object of study of several papers.

In \cite{Can1} we investigate singular perturbations of the Blaschke products 
\begin{equation}\label{eqblas}
B_a(z)=z^3\frac {z-a}{1-\overline{a}z}
\end{equation}
where $a\in\dis^*=\dis\setminus\{0\}$. These Blaschke products have $z=0$ and $z=\infty$ as superattracting fixed points of local degree 3. Moreover,  their Julia set, which is the common boundary of the basins of attraction of $z=0$ and $z=\infty$, is given by $\mathcal{J}(B_a)=\cercle$. In this sense, the dynamics of  $B_a$ is very similar to the one of the polynomials $z^3$. However, the maps $B_a$ have two extra critical points $c_-(a)\in\dis$ and $c_+(a)\in\com\setminus\overline{\dis}$. We consider the singular perturbations given by

\begin{equation}\label{eqblasperturbed}
B_{a,\lambda}(z)=z^3\frac{z-a}{1-\overline{a}z}+\frac{\lambda}{z^2}
\end{equation}

\noindent where $a\in\dis^*$ and $\lambda\in\com^*$. These singularly perturbed maps have two critical points $c_-(a,\lambda)$ and $c_+(a,\lambda)$ which are analytic continuation of the ones of $B_a$. If $|\lambda|$ is small, the dynamics of $B_{a,\lambda}$ is similar to the one of $Q_{\lambda,3,2}$ in a neighbourhood of $z=0$ and we obtain a McMullen-like Julia set. 
However, the extra critical point $c_-$ allows the maps $B_{a,\lambda}$ to have richer dynamics than the maps $Q_{\lambda,3,2}$. 
The main theorem in \cite{Can1} describes the connectivity of the Fatou components of $B_{a,\lambda}$ in the case that $|\lambda|$ is small enough and $c_-(a,\lambda)$ belongs to the basin of attraction of $z=\infty$, $A_{a,\lambda}(\infty)$. We denote the immediate basin of attraction of $\infty$, i.e.\ the connected component of  $A_{a,\lambda}(\infty)$ which contains $\infty$, by  $A^*_{a,\lambda}(\infty)$. We denote by $\dis_{R}$ the disk centred at $0$ with radius $R$ and by $\dis_{R}^*$ the punctured disk $\dis_{R}\setminus\{0\}$.  
  
\begin{thm}[{\cite[Theorem A]{Can1}}]\label{thmavell}

Fix $a\in\dis^*$. There exists a constant $\mathcal{C}(a)$ such that if $\lambda\in\dis_{{C}(a)}^*$ and $c_-(a,\lambda)\in A_{a,\lambda}(\infty)$, then $c_-(a,\lambda)$ belongs to a connected component $\mathcal{U}_c$ of $A_{a,\lambda}(\infty)\setminus A_{a,\lambda}^*(\infty)$ and exactly one of the following holds.

\begin{enumerate}[a)]
\item The Fatou component $\mathcal{U}_c$ is simply connected. All Fatou components of $B_{a, \lambda}$ have connectivity 1 or 2. (see Figure~\ref{dynamfigureABC} (a) and (b)).
\item The Fatou component $\mathcal{U}_c$ is multiply connected and does not surround $z=0$. All Fatou components of $B_{a, \lambda}$ have connectivity 1, 2 or 3. (see Figure~\ref{dynamfigureABC} (c) and (d)).
\item The Fatou component $\mathcal{U}_c$ is multiply connected and surrounds $z=0$. All Fatou components of $B_{a, \lambda}$ have finite connectivity but there are components of arbitrarily large connectivity. (see Figure~\ref{dynamfigureABC} (e) and (f)).
\end{enumerate} 
\end{thm}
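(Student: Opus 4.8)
The plan is to reduce the statement to a Riemann--Hurwitz bookkeeping along orbits of Fatou components, once the escaping structure near $z=0$ has been set up. For $|\lambda|$ below a suitable $\mathcal{C}(a)$, the map $B_{a,\lambda}$ is a small perturbation of the Blaschke product $B_a$ outside a disk $\dis_{\varepsilon_0}$ --- where the dynamics of $B_a$ is that of $z^3$ on $\wcom\setminus\overline{\dis}$ and a Blaschke dynamics on $\dis$ --- while on $\dis_{\varepsilon_0}^*$ it is a small perturbation of $Q_{\lambda,3,2}$, so that, since $\tfrac13+\tfrac12<1$, we are in McMullen's regime. From this one extracts the usual structure: $A_{a,\lambda}^*(\infty)$ is a Jordan domain on which $B_{a,\lambda}$ is proper of degree $4$, containing exactly the critical points $\infty$ (multiplicity $2$) and $c_+(a,\lambda)$; there is a trap door $T\ni 0$, a Jordan domain, with $B_{a,\lambda}\colon T\to A_{a,\lambda}^*(\infty)$ proper of degree $2$, branched only at the double pole $0$; the preimage $B_{a,\lambda}^{-1}(T)$ consists of one annulus $\mathcal{A}$ around $0$ carrying the five extra critical points near $0$ (so $B_{a,\lambda}\colon\mathcal{A}\to T$ is proper of degree $5$) together with one Jordan domain near $z=a$. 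In particular those five critical points escape, and since $B_{a,\lambda}^{-1}\big(A_{a,\lambda}^*(\infty)\big)=A_{a,\lambda}^*(\infty)\cup T$, the hypothesis $c_-(a,\lambda)\in A_{a,\lambda}(\infty)$ forces \emph{all} critical points into $A_{a,\lambda}(\infty)$; hence $B_{a,\lambda}$ is hyperbolic, $\mathcal{F}(B_{a,\lambda})=A_{a,\lambda}(\infty)$, every Fatou component is an iterated preimage of $A_{a,\lambda}^*(\infty)$, and $c_-(a,\lambda)$ lies in a component $\mathcal{U}_c$ of $A_{a,\lambda}(\infty)\setminus A_{a,\lambda}^*(\infty)$ whose forward orbit $\mathcal{U}_c=V_0\to V_1\to\cdots\to V_N=T\to A_{a,\lambda}^*(\infty)$ consists of pairwise distinct components.

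For a Fatou component $U$ with first-landing orbit $U=U_0\to\cdots\to U_m=A_{a,\lambda}^*(\infty)$, writing $d_j$ for the degree of $B_{a,\lambda}\colon U_j\to U_{j+1}$ and $\delta_j$ for the number of critical points (with multiplicity) of $B_{a,\lambda}$ in $U_j$, Riemann--Hurwitz gives
\[
\operatorname{conn}(U_j)=d_j\operatorname{conn}(U_{j+1})-2d_j+2+\delta_j .
\]
Three consequences. (i) Every Fatou component has finite connectivity, since this recursion from $\operatorname{conn}\big(A_{a,\lambda}^*(\infty)\big)=1$ produces only finite integers; this already yields the finiteness claimed in~c). (ii) An unbranched preimage of a disk is a disk, and of an annulus is an annulus. (iii) As every critical point other than $c_-(a,\lambda)$ lies in $A_{a,\lambda}^*(\infty)$, in $T$, or in $\mathcal{A}$, and these components have connectivity $1$, $1$, $2$ respectively, the only component that can be the first along some orbit with connectivity $\geq3$ is $\mathcal{U}_c$; moreover $\operatorname{conn}(\mathcal{U}_c)\geq3$ forces $V_{N-1}=\mathcal{A}$ (the only branched preimage of $T$), and then the recursion gives $\operatorname{conn}(\mathcal{U}_c)=3$ exactly. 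Hence $\operatorname{conn}(\mathcal{U}_c)\in\{1,3\}$ --- the ``simply connected'' versus ``multiply connected'' alternative of the statement --- and every Fatou component that is not $\mathcal{U}_c$ or one of its iterated preimages has connectivity $1$ or $2$.

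It remains to bound the connectivity of the iterated preimages of $\mathcal{U}_c$. Such a component $W$ contains no critical point, so $B_{a,\lambda}^{\,k}\colon W\to\mathcal{U}_c$ is an unbranched cover of some degree $d$ and $\operatorname{conn}(W)=d\,(\operatorname{conn}(\mathcal{U}_c)-2)+2$. In case~a), $\operatorname{conn}(\mathcal{U}_c)=1$ and the entire orbit of $\mathcal{U}_c$ consists of disks, so all its preimages are disks and every Fatou component has connectivity $1$ or $2$. In cases~b) and~c), $\operatorname{conn}(\mathcal{U}_c)=3$ and $\operatorname{conn}(W)=d+2$, so one must control $d$; the double pole at $0$ is the only source of unbounded local degree, as pulling a component surrounding $0$ back to a component surrounding $0$ multiplies degree by $2$, while a pull-back not surrounding $0$ (equivalently, not enclosing the critical values that accumulate near $T$) is univalent. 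If neither $\mathcal{U}_c$ nor any $V_j$ surrounds $0$ (case~b), every iterated preimage of $\mathcal{U}_c$ maps to it with degree $1$, hence has connectivity $3$, and the maximal connectivity is $3$. If instead some forward iterate of $\mathcal{U}_c$ surrounds $0$ --- equivalently $\mathcal{U}_c$ does, as $\mathcal{A}$ surrounds $0$ and this propagates backward through the annuli around $T$ --- then there is a nested sequence $W_0=\mathcal{U}_c\supset W_1\supset W_2\supset\cdots$ of Fatou components, each surrounding $0$, with $B_{a,\lambda}\colon W_{k+1}\to W_k$ proper, unbranched, of degree $2$; then $\operatorname{conn}(W_{k+1})=2\operatorname{conn}(W_k)-2$ with $\operatorname{conn}(W_0)=3$, so $\operatorname{conn}(W_k)=2^{k}+2\to\infty$ while every component still has finite connectivity --- which is case~c).

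I expect the real obstacle to be the topology packed into the last paragraph: showing rigorously that $\operatorname{conn}(\mathcal{U}_c)\geq3$ is equivalent to ``$c_-(a,\lambda)$ eventually lands in $\mathcal{A}$ and then in $T$'', that ``$\mathcal{U}_c$ surrounds $0$'' is exactly the dichotomy between bounded and unbounded degrees of pull-backs, and --- in case~c) --- that the $W_k$ genuinely exist for all $k$, so that arbitrarily large connectivities are \emph{attained}, not merely not excluded. This requires a precise description of which Fatou components surround $0$, a careful analysis of $B_{a,\lambda}$ on the annuli around $T$ where the order-$2$ pole forces the degree-doubling, and control of McMullen's Cantor-of-circles combinatorics for $Q_{\lambda,3,2}$; the quantitative estimates underlying the trap door, the annulus $\mathcal{A}$, and the escape of the five critical points near $0$ are where the constant $\mathcal{C}(a)$ is fixed.
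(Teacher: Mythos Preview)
This theorem is not proved in the present paper: it is quoted verbatim as \cite[Theorem~A]{Can1}, and the paper only reproduces the part of the argument it needs later, namely Lemma~\ref{fatouinfinit} (together with the structural Theorem~\ref{thmcritzeros}). So there is no ``paper's own proof'' to compare against beyond that lemma, and the full argument---including the explicit construction of $\mathcal{C}(a)$, the trap door $T_0$, and the annulus $A_0$---lives in the earlier paper.

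That said, your outline is the right one and matches both Lemma~\ref{fatouinfinit} here and the strategy of \cite{Can1}: set up $A^*(\infty)$, $T_0$, $A_0$, $D_0$ via the McMullen mechanism; observe that $c_-$ is the only unaccounted-for critical point; and run Riemann--Hurwitz along orbits to read off connectivities. Your trichotomy $\operatorname{conn}(\mathcal{U}_c)\in\{1,3\}$ according to whether $c_-$ eventually lands in $D_0$ or in $A_0$ is exactly Lemma~\ref{fatouinfinit}.

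Two places need tightening. First, in case~b) your claim that every iterated pull-back of $\mathcal{U}_c$ has degree~$1$ is the delicate step: an unbranched cover of a pair of pants can have any degree, so one must use the topology of the plane. The argument in Lemma~\ref{fatouinfinit} (and in \cite{Can1}) goes through the filled region $\mathrm{Bdd}(U)$: if $U$ does not surround $0$ then $\mathrm{Bdd}(U)$ is a simply connected domain avoiding $0$ and all critical values except possibly $v_-$, whence its preimages are simply connected and none of them surrounds $0$; the degree control then follows. Your heuristic ``pull-back not surrounding $0$ is univalent'' is not literally the reason. Second, in case~c) the notation $W_0\supset W_1\supset\cdots$ is wrong (distinct Fatou components are disjoint); you mean the $W_k$ are nested annular regions around $0$. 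The degree-$2$ claim is correct for the \emph{inner} pull-back through $\mathcal{A}_{in}$ (cf.\ Proposition~\ref{preimcorbes}), and that is the sequence one uses to get $\operatorname{conn}(W_k)=2^k+2$; but note there are also outer pull-backs of degree $3$ or $4$, so ``the'' pull-back surrounding $0$ is not unique and one must specify which branch one follows.
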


\begin{figure}[p]
    \centering
    
    \subfigure[\scriptsize{$B_{0.5i,\lambda}$ for $\lambda=-1.9\times 10^{-6}+3.15\times 10^{-5}i$}]{
    \includegraphics[width=160pt]{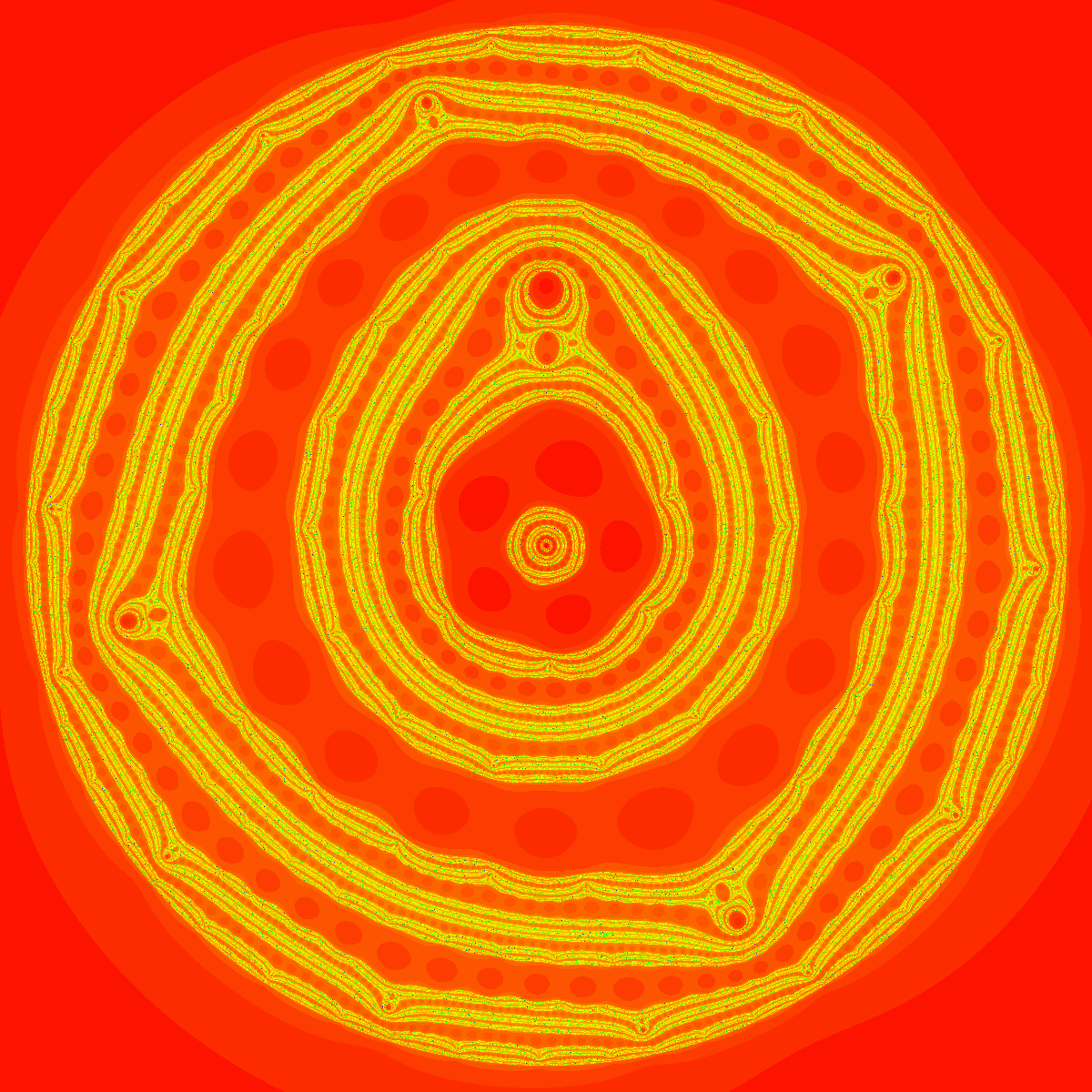}}
    \hspace{0.1in}
    \subfigure[\scriptsize{Zoom in Figure (a) } ]{
    \def\svgwidth{160pt}
    \subimport{figures/}{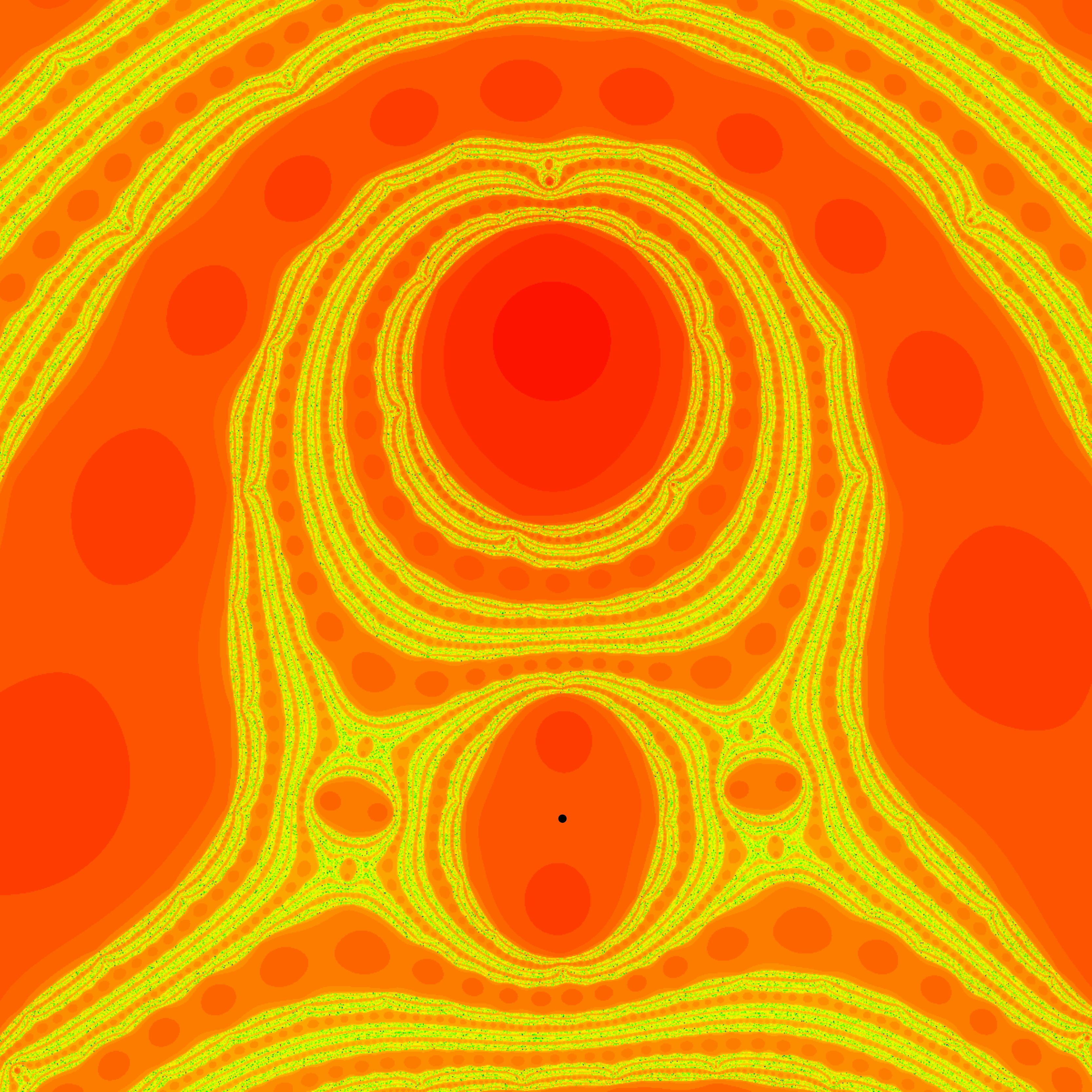_tex}}
    
        \subfigure[\scriptsize{$B_{0.5i,\lambda}$ for $\lambda=9.5\times 10^{-7}+3.05\times 10^{-5}i$}]{
    \includegraphics[width=160pt]{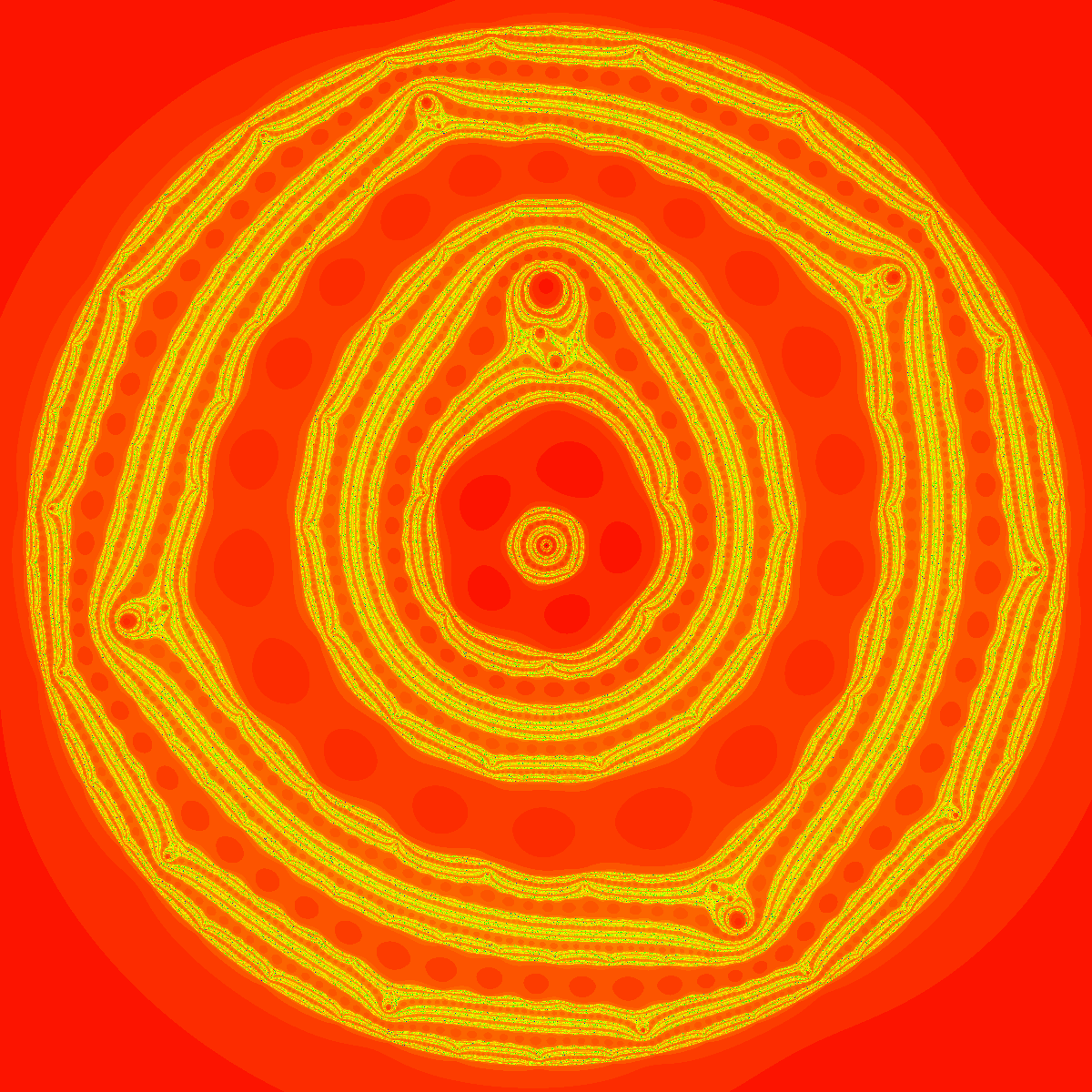}}
    \hspace{0.1in}
        \subfigure[\scriptsize{Zoom in Figure (b) } ]{
    \def\svgwidth{160pt}
	\subimport{figures/}{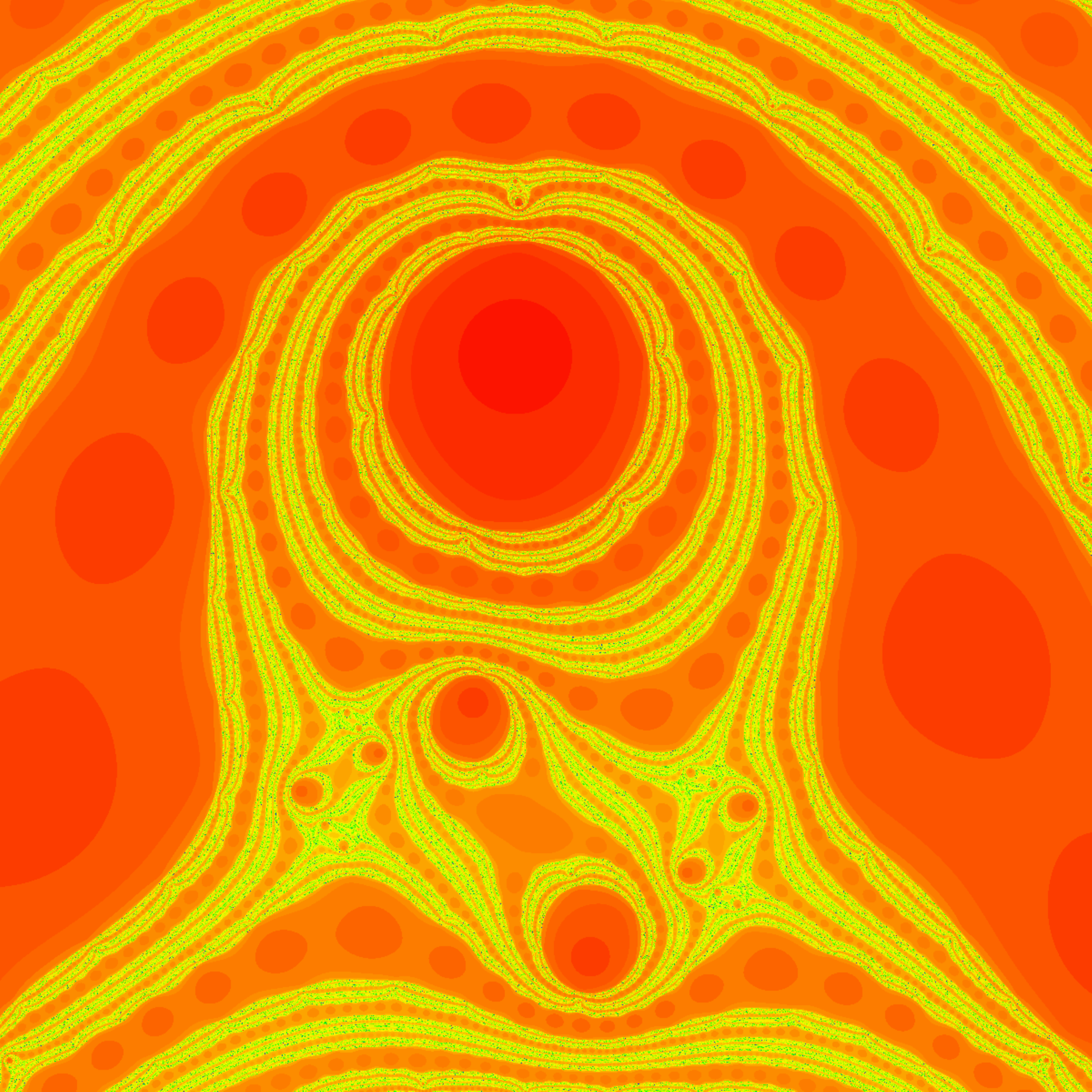_tex}}
	
        \subfigure[\scriptsize{$B_{0.5i,\lambda}$ for $\lambda=7.74\times 10^{-6}+9.9\times 10^{-6}$}]{
    \includegraphics[width=160pt]{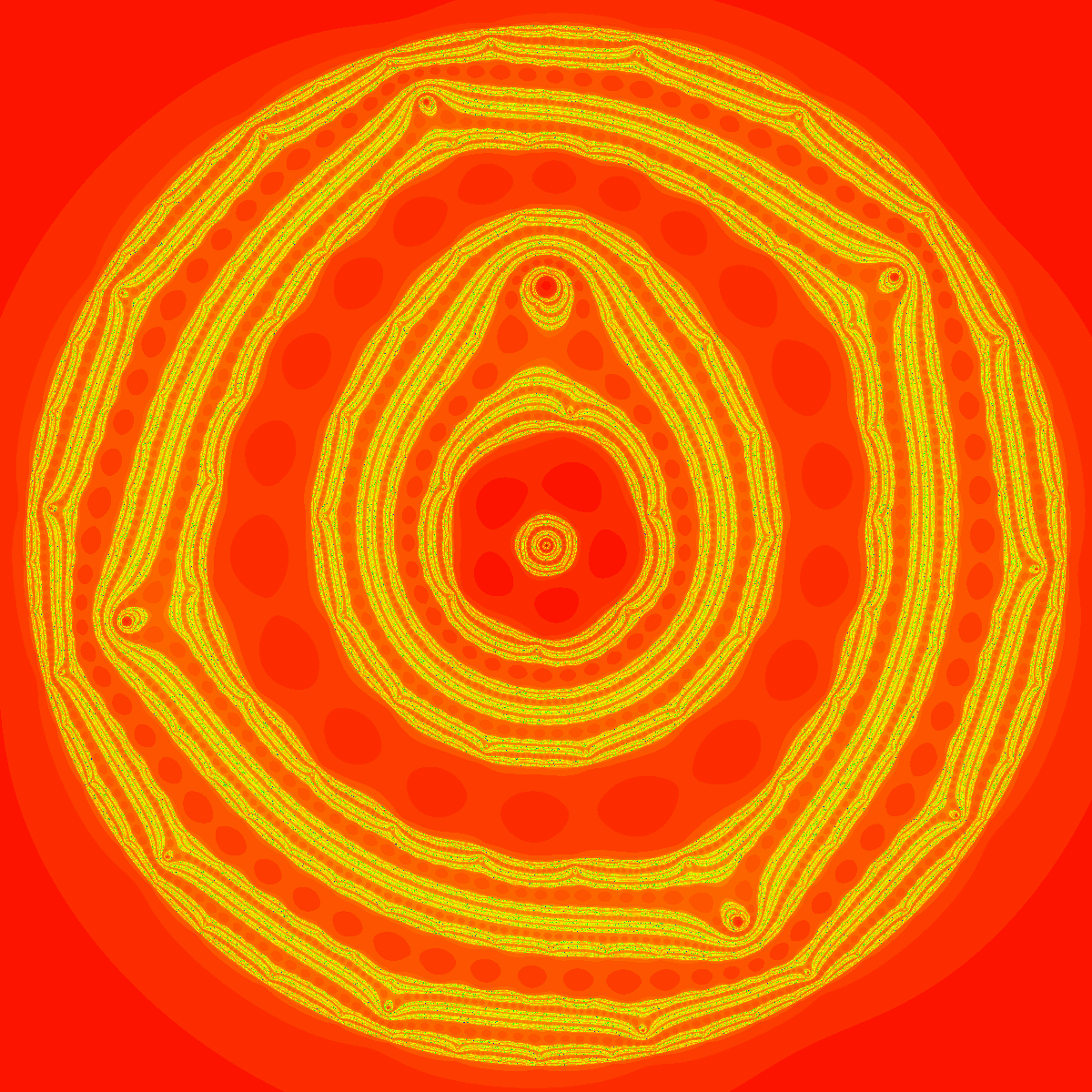}}
    \hspace{0.1in}
    \subfigure[\scriptsize{Zoom in Figure (c) } ]{
    \def\svgwidth{160pt}
    \subimport{figures/}{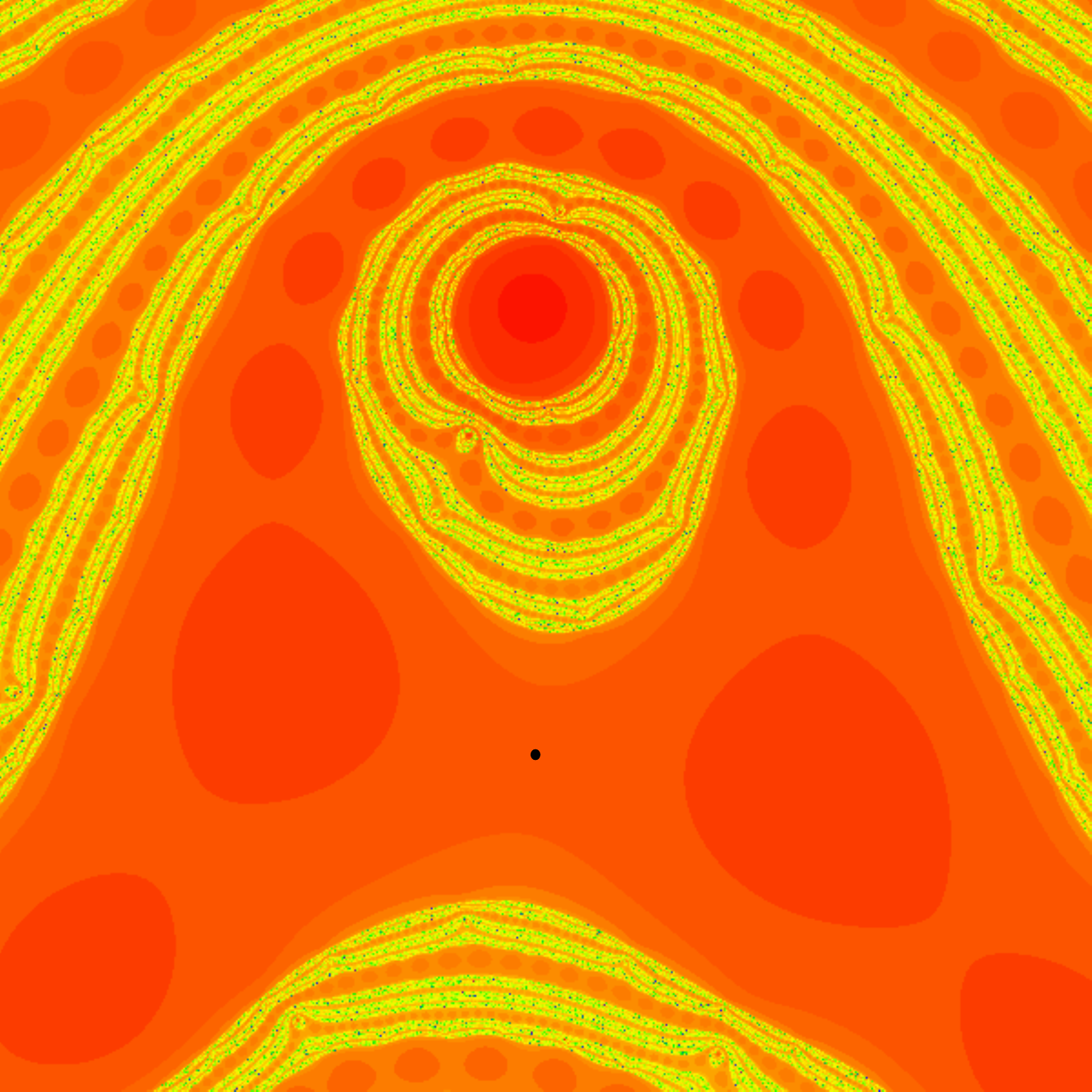_tex}}

	 \caption{\small Dynamical planes of 3 maps $B_{a,\lambda}$ with $a=0.5i$ for which the 3 statements of Theorem~\ref{thmavell} hold. We use a scaling from yellow to red to plot the basin of attraction of $z=\infty$. We may observe an approximation of the Julia set in yellow. }

    \label{dynamfigureABC}
\end{figure}

It follows from Theorem~\ref{thmavell} that there may be parameters for which the maps $B_{a,\lambda}$ have Fatou components of arbitrarily large finite connectivity (within a single dynamical plane).  The main goal of this paper is to prove the existence of parameters $(a,\lambda)\in\dis^*\times\com^*$ such that $|\lambda|<\mathcal{C}(a)$ and $c_-(a,\lambda)\in A_{a,\lambda}(\infty)$, realizing each of the three statements of Theorem~\ref{thmavell}. This is the content of Theorem~A.

\begin{teoremA}
There exists parameters $(a_i,\lambda_i)\in\dis^*\times\com^*$, $i=1,2,3$, such that $|\lambda_i|<\mathcal{C}(a_i)$,  $c_-(a_i,\lambda_i)\in A_{a_i,\lambda_i}(\infty)$, and each of the following hold.
\begin{enumerate}[a)]
\item All Fatou components of $B_{a_1, \lambda_1}$ have connectivity 1 or 2.
\item  All Fatou components of $B_{a_2, \lambda_2}$ have connectivity 1, 2 or 3.
\item  All Fatou components of $B_{a_3, \lambda_3}$ have finite connectivity but there are components of arbitrarily large connectivity. 
\end{enumerate} 

\end{teoremA}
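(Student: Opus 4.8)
Fix $a\in\dis^*$ (chosen suitably if necessary), so that $\bar a$ is a constant and $B_{a,\lambda}$ depends holomorphically on $\lambda$, and study the family $\lambda\mapsto B_{a,\lambda}$ on the punctured disk $\dis_{\mathcal{C}(a)}^*$. By Theorem~\ref{thmavell}, it suffices to produce $\lambda_1,\lambda_2,\lambda_3$ in $\dis_{\mathcal{C}(a)}^*$ for which $c_-$ lies in a Fatou component $\mathcal{U}_c\subset A_{a,\lambda_i}(\infty)\setminus A^*_{a,\lambda_i}(\infty)$ of, respectively, the first, second and third geometric type of that theorem (simply connected; multiply connected and not surrounding $0$; multiply connected and surrounding $0$). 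The starting point is the dynamical picture for small $|\lambda|$: since $B_{a,0}=B_a$ has $\mathcal{J}(B_a)=\cercle$ and $c_-(a)\in\dis=A_{B_a}(0)$, the orbit of $c_-(a)$ under $B_a$ converges to the superattracting point $0$; because $c_-(a,\lambda)\to c_-(a)$ and $B_{a,\lambda}\to B_a$ locally uniformly on $\wcom\setminus\{0\}$ as $\lambda\to 0$, for $|\lambda|$ small the orbit of $c_-(a,\lambda)$ shadows it and first enters the disk $\dis_{|\lambda|^{1/5}}$ after a number $N=N(\lambda)$ of steps, with $N(\lambda)\to\infty$. On $\dis_{|\lambda|^{1/5}}$ the map $B_{a,\lambda}$ is a small perturbation of $Q_{\lambda,3,2}(z)=z^3+\lambda/z^2$, so the local picture is the McMullen–like nest studied in \cite{Can1}: the trap door $T_{a,\lambda}\ni 0$ mapped $2$–to–$1$ onto $A^*_{a,\lambda}(\infty)$, together with its iterated preimages, which split $A_{a,\lambda}(\infty)$ near $0$ into ``gaps'' of the three geometric types above. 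By Theorem~\ref{thmavell}, which of (a), (b), (c) holds is governed by the geometric type of the gap $\mathcal{U}_c$ into which the orbit of $c_-(a,\lambda)$ eventually settles.

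\textbf{A parameter–space surjectivity.} To realise each of the three types I would parametrise the escape of $c_-$. Rescaling the dynamical plane near $0$ by $\lambda^{1/5}$, the nest occupies a model region $\mathcal{M}$ essentially independent of $\lambda$, and on a simply connected open piece $\Lambda\subset\dis_{\mathcal{C}(a)}^*$ on which $N(\lambda)$ is constant, equal to $k$, and $c_-(a,\lambda)$ stays simple, the rescaled first–landing map $\widehat\Phi(\lambda):=\lambda^{-1/5}\,B_{a,\lambda}^{\,k}\bigl(c_-(a,\lambda)\bigr)\in\mathcal{M}$ is holomorphic in $\lambda$. The crux is to show that $\widehat\Phi$ extends continuously to $\overline{\Lambda}$ and is a \emph{proper} holomorphic map onto a fixed disk $\dis_{r_0}\supset\mathcal{M}$, hence surjective of positive degree onto it; combined with a description of the combinatorics of the escape (which $\Lambda$ to pick) this yields a phase–parameter dictionary between the pieces $\Lambda$ and the corresponding pieces of $\mathcal{M}$. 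The mechanism behind the properness is that the first $k-1$ iterates act, at the relevant scales, as branched coverings of uniformly bounded distortion — using that $B_a$ is expanding on $\overline\dis\setminus\dis_{|\lambda|^{1/5}}$ away from the finite initial segment of the critical orbit, and that the term $\lambda/z^2$ is uniformly small there — while the final step, where $\lambda/z^2$ dominates, carries the strong $\lambda$–dependence that forces the landing point to sweep out all of $\mathcal{M}$. Granting this, and choosing the pieces $\Lambda$ far enough out (so $|\lambda|<\mathcal{C}(a)$), one picks $\lambda_i$ in the piece whose escape combinatorics, by the analysis of \cite{Can1}, makes $\mathcal{U}_c$ of type $i$, and sets $a_i=a$; Theorem~\ref{thmavell} then gives statements (a), (b), (c).

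\textbf{Main obstacle.} The hard part is this properness and degree count for $\widehat\Phi$: one must control a composition of $N(\lambda)\to\infty$ maps — a long ``hyperbolic'' segment, along which the accumulated derivative along the orbit tends to $0$, glued to a short ``singular'' final segment carrying the $\lambda$–sensitivity — sharply enough to conclude that $\widehat\Phi$ covers the whole model disk and does not escape to its boundary. A subsidiary task, building on \cite{Can1}, is the combinatorial bookkeeping: to isolate the escape patterns that put $\mathcal{U}_c$ in each of the three geometric classes, to confirm that all three are present among the preimages of $T_{a,\lambda}$ for the parameters at hand, and to match each to a nonempty piece $\Lambda$ of parameter space via the dictionary above. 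An alternative would be a quasiconformal surgery producing, from a hand–built model realising each prescribed configuration, a member of the family $B_{a,\lambda}$; but the parameter–plane route is the natural one given Theorem~\ref{thmavell}, and it automatically locates the examples near $\lambda=0$, where that theorem applies.
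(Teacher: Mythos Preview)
Your proposal outlines a reasonable strategy but leaves its central step unproven: you explicitly write ``Granting this'' for the properness and degree count of the rescaled first-landing map $\widehat\Phi$, and you correctly identify that controlling a composition of $N(\lambda)\to\infty$ iterates is the hard part. So as written this is a plan, not a proof.

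The paper's route is genuinely different and much more elementary; it avoids your obstacle entirely by replacing the proper-map/surjectivity argument with intermediate-value-type continuity arguments along paths in parameter space. For case~(c), the paper shows (Lemma~\ref{convst}) that as $\lambda\to 0$ the annulus $A_0(a,\lambda)$ and all its labelled preimages $A_\Delta(a,\lambda)$ shrink to $0$, while $c_-(a,\lambda)\to c_-(a)\neq 0$; hence for $|\lambda|=\rho$ the critical point lies inside $\mathrm{Bdd}(A_{s(a,\rho)})$, for $|\lambda|$ small it lies outside $A_\Delta$, and since the preimage sets $B_{a,\lambda}^{-p_\Delta}(A_0)$ vary continuously in $\lambda$ (Corollary~\ref{continuitatanells}), along any path joining these regimes $c_-$ must enter $A_\Delta$ (Theorem~\ref{thmb0}). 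No phase--parameter dictionary, no properness, no degree computation. For case~(a) the paper restricts to \emph{real} $a\in(0,1)$ and $\lambda>0$, analyses the one-dimensional real dynamics directly (monotonicity intervals, the repelling fixed point $x_1(a,\lambda)$ and the accumulation of real preimages of $z_0$ on it), and again uses a continuity argument in $\lambda$ to hit a preimage of $D_0$ (Theorem~\ref{thmcaseA}). Case~(b) is then obtained by showing that every case-(a) parameter is surrounded by case-(b) parameters, because doubly connected preimages of $A_0$ not surrounding $0$ accumulate on $\partial D$ (Theorem~\ref{thmcaseB}).

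What your approach would buy, if completed, is stronger: an actual holomorphic phase--parameter correspondence, hence information about the topology and multiplicity of the hyperbolic components (the paper only gets that they are multiply connected and conjectures they are annuli). What the paper's approach buys is that it works with soft tools --- continuity of preimage loci, the labelling/ordering of the $A_\Delta$ (Lemma~\ref{deforderwell}, Lemma~\ref{ordering}), and real one-dimensional dynamics --- and never needs to control the long composition you flag as the main obstacle.
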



The rational maps $B_{a,\lambda}$ have 7 free critical orbits which are not related by any kind of symmetry. Because of this, it is very difficult to investigate the parameter space of $B_{a,\lambda}$ from a global point of view. However,  if $\lambda\in\dis_{{C}(a)}^*$ then all critical points but, maybe, $c_-(a,\lambda)$ belong to the basin of attraction of $\infty$ (see Theorem~\ref{thmcritzeros}). Hence, fixed $a_0\in\dis^*$, we can draw the parameter plane of $B_{a_0,\lambda}$ for $|\lambda|$ small using the orbit of $c_-(a_0,\lambda)$ (see Figure~\ref{param05i}). These numerical experiments show annular hyperbolic components of parameters which surround $\lambda=0$ and accumulate on it (see Figure~\ref{param05i} (left)). These hyperbolic components correspond to parameters for which statement c) of Theorem A holds. In our next result, Theorem~B, we show that there are multiply connected hyperbolic components which surround $\lambda=0$ and accumulate on it. We conjecture that these hyperbolic components are annuli.

\begin{figure}[hbt!]
\centering
    \subfigure{
    \begin{tikzpicture}
    \begin{axis}[width=7.7cm,  axis equal image, scale only axis,  enlargelimits=false, axis on top]
      \addplot graphics[xmin=-0.000065,xmax=0.00008,ymin=-0.0000725,ymax=0.0000725] {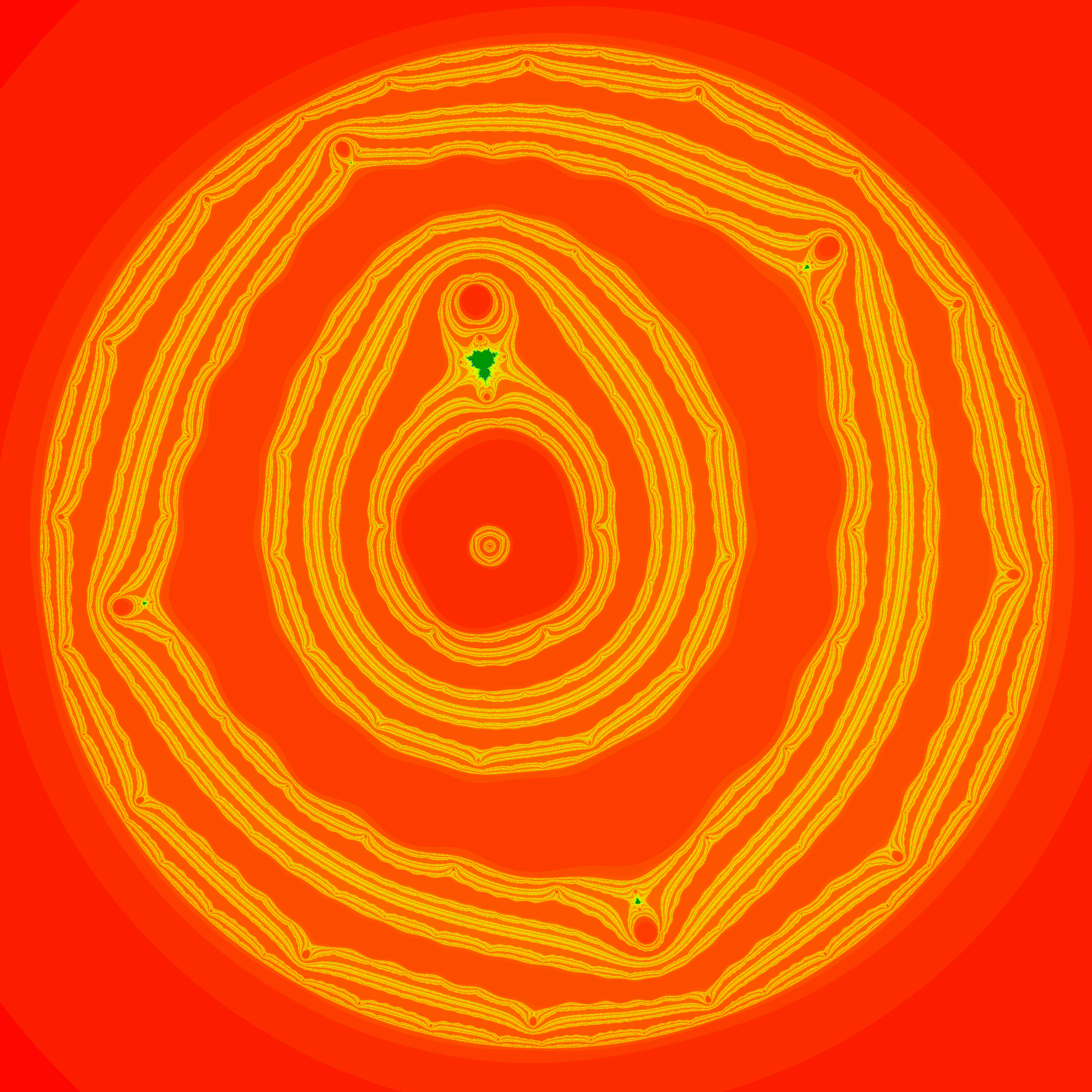};
    \end{axis}
  \end{tikzpicture}}
     \subfigure{
     \begin{tikzpicture}
     \begin{axis}[width=7.7cm,  axis equal image, scale only axis,  enlargelimits=false, axis on top]
      \addplot graphics[xmin=-0.000012,xmax=0.000008,ymin=0.000017,ymax=0.000037] {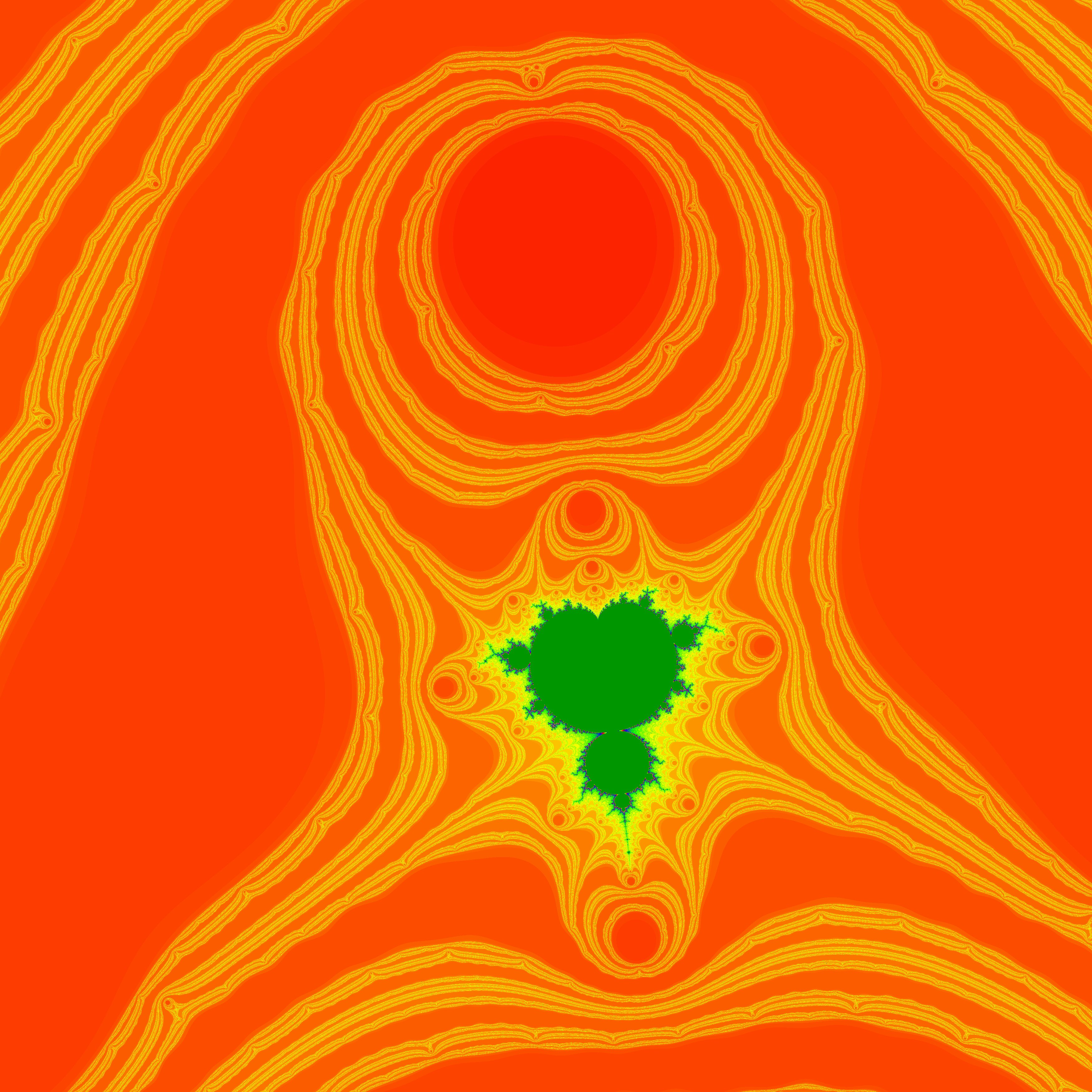};
    \end{axis}
  \end{tikzpicture}}
    \caption{\small{Parameter plane of $B_{a,\lambda}$ for $a=0.5i$ fixed. The $x$-axis corresponds to $\rm{Re}(\lambda)$ and the $y$-axis to $\rm{Im}(\lambda)$. The parameters for the left figure corresponds to $\rm{Re}(\lambda)\in (-6.5\times 10^{-5}, 8\times 10^{-5} )$ and $\rm{Im}(\lambda)\in (-7.25\times 10^{-5}, 7.25\times 10^{-5} )$. The right figure is a zoom in the left one. Colours are as follows. We use a scaling from yellow to red to draw the parameters for which $c_-\in A(\infty)$ and green for $c_-\notin A(\infty)$. }}
    \label{param05i}
\end{figure}

\begin{teoremB}
Fixed $a\in\dis^*$, there are multiply connected hyperbolic components which surround $\lambda=0$ and accumulate on it. The corresponding maps $B_{a,\lambda}$ have Fatou components of arbitrarily large finite connectivity.
\end{teoremB}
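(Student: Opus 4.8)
The strategy is to show that, near $\lambda=0$, the $\lambda$–parameter plane of $B_{a,\lambda}$ reproduces the ``McMullen‑like'' structure that the singular term $\lambda z^{-2}$ forces in the dynamical plane near $z=0$; the bridge between the two is that, for $\lambda$ small, the orbit of $c_-(a,\lambda)$ must shadow the orbit of $c_-(a)$ under $B_a$, which lies in $\dis=A_{B_a}(0)$ and therefore converges to $0$. I would first record, largely from \cite{Can1}, the relevant dynamical features valid for $0<|\lambda|<\mathcal C(a)$: a fixed disk complement $\wcom\setminus\dis_R$ lies in $A^*_{a,\lambda}(\infty)$; there is a trap door $T_\lambda$, a quasidisk around $0$ of size comparable to $|\lambda|^{1/2}$, with $B_{a,\lambda}\colon T_\lambda\to A^*_{a,\lambda}(\infty)$ proper; since $B_{a,\lambda}(z)=-az^3+O(z^4)+\lambda z^{-2}$ near $0$, the successive preimages of $T_\lambda$ include a nested family of annuli $\mathcal V_1(\lambda),\mathcal V_2(\lambda),\dots$ surrounding $z=0$ at radial scales $\asymp|\lambda|^{1/5},|\lambda|^{1/15},\dots$; and, crucially, the proof of Theorem~\ref{thmavell} describes statement (c) as the situation in which the Fatou component of $c_-(a,\lambda)$ encloses $z=0$, which happens exactly when the orbit of $c_-(a,\lambda)$ reaches $T_\lambda$ by crossing one of these $0$–surrounding annuli $\mathcal V_k$ along a branch that stays $0$–surrounding under the relevant pullbacks of $B_{a,\lambda}$.

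Next comes the shadowing. Writing $P_j:=B_a^{j}(c_-(a))$, one has $P_j\to 0$ with $P_{j+1}=-aP_j^{3}(1+o(1))$, so $|P_j|$ decays like a triple exponential; and $B_{a,\lambda}^{j}(c_-(a,\lambda))=P_j(1+o(1))$ uniformly in $\arg\lambda$ as long as the orbit stays at scale $\gg|\lambda|^{1/5}$ (the errors remain controlled because $B_a$ is strongly contracting along its orbit toward $0$). For each large $n$ I would then choose $r_n$ comparable to $|P_{n-2}|^{5}$, with the constant tuned so that $P_{n-2}$ sits in the middle of the radial range of $\mathcal V_1$; then $r_n\searrow 0$, and on the round circle $C_n=\{|\lambda|=r_n\}$ the orbit of $c_-(a,\lambda)$ shadows $P_j$ safely through time $n-3$, one more application of $B_{a,\lambda}$ gives $B_{a,\lambda}^{n-2}(c_-(a,\lambda))\in\mathcal V_1(\lambda)$ for \emph{every} $\lambda\in C_n$, hence $B_{a,\lambda}^{n-1}(c_-(a,\lambda))\in T_\lambda$ and $B_{a,\lambda}^{n}(c_-(a,\lambda))\in A^{*}_{a,\lambda}(\infty)$; moreover $B_{a,\lambda}^{j}(c_-(a,\lambda))\notin\overline{T_\lambda}$ for $j<n-1$. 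Thus $C_n$ lies in the open escaping set $\Omega:=\{0<|\lambda|<\mathcal C(a):c_-(a,\lambda)\in A_{a,\lambda}(\infty)\}$, which by Theorem~\ref{thmcritzeros} consists of hyperbolic parameters, and the combinatorics of the orbit of $c_-$ (in particular the first‑entrance time to $T_\lambda$) is constant along $C_n$.

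To conclude: let $\mathcal H_n$ be the component of the hyperbolicity locus containing $C_n$. Since $\mathcal H_n$ contains the Jordan curve $C_n$ winding once around $\lambda=0$, while $\lambda=0$ is not a point of the family, $\mathcal H_n$ surrounds $\lambda=0$ and is in particular multiply connected; stability of hyperbolic dynamics makes the first‑entrance time constant on $\mathcal H_n$, so the $\mathcal H_n$ are pairwise distinct, and since $r_n\to0$ they accumulate on $\lambda=0$. Finally, for $\lambda\in\mathcal H_n$ the orbit of $c_-(a,\lambda)$ reaches $T_\lambda$ through the $0$–surrounding annulus $\mathcal V_1$, so by the characterisation of statement (c) recalled above every such $B_{a,\lambda}$ falls under case (c) of Theorem~\ref{thmavell}, and therefore has Fatou components of arbitrarily large finite connectivity.

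I expect the main obstacles to be twofold. First, the quantitative, $\arg\lambda$–uniform shadowing together with the control of the orbit once it reaches scale $\asymp|\lambda|^{1/5}$: one must rule out that the orbit of $c_-$ is captured by an attracting or irrationally indifferent cycle near $z=0$ — the phenomenon responsible for the green ``pockets'' of Figure~\ref{param05i}, where $c_-\notin A(\infty)$ — which requires either choosing the radii $r_n$ so that $C_n$ avoids those localised pockets, or proving directly from $B_{a,\lambda}(z)=-az^3+O(z^4)+\lambda z^{-2}$ that, once inside a fixed small disk around $0$, the orbit reaches $T_\lambda\cup(\wcom\setminus\dis_R)$ within a bounded number of steps. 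Second, and more delicate, the verification that along $C_n$ the Fatou component of $c_-$ genuinely encloses $z=0$: one must check that the $0$–surrounding property propagates backwards through the entire orbit of $c_-$ — not just through the nested annuli $\mathcal V_k$ near $0$ but also through the $B_a$–like pullbacks further out, where the orbit could in principle pass close to the preimage of $0$ at $a$ — so the fine description of the dynamics from \cite{Can1} is needed here, and this is where the dependence on $a$ is most subtle. A complementary route to the $0$–surrounding loop inside $\Omega$ is an argument‑principle computation for the holomorphic family $\lambda\mapsto B_{a,\lambda}^{n}(c_-(a,\lambda))$ restricted to $C_n$, exhibiting a nonzero winding number.
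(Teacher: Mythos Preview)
Your route is genuinely different from the paper's, and the two obstacles you flag in your last paragraph are precisely the places where the paper's argument diverges from yours and avoids the difficulty.

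The paper never tries to exhibit an entire round circle $C_n$ in parameter space on which the escaping combinatorics of $c_-$ is constant. Instead it runs a soft separation argument on the dynamical side. After labelling all the $0$--surrounding iterated preimages $A_\Delta(a,\lambda)$ of $A_0$ (your $\mathcal V_1$) by their $\mathcal A_{in}/\mathcal A_{out}$ itineraries, it fixes a continuous family of Jordan curves $\gamma_\lambda^\Delta\subset A_\Delta(a,\lambda)$ surrounding $0$ and observes: (i) for $|\lambda|=\rho$ the point $c_-(a,\lambda)$ lies in the bounded component of $\com\setminus\gamma_\lambda^\Delta$ (this is the \emph{definition} of the index $s(a,\rho)$), while (ii) for $|\lambda|$ small enough it lies in the unbounded component, because the whole tower of annuli shrinks to $0$ faster than $c_-$ does (this qualitative fact, Lemma~\ref{convst}, is exactly your shadowing idea, but used only in the limit). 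Hence every path in $\dis_\rho^*$ from $|\lambda|=\rho$ to $|\lambda|=\epsilon$ meets a parameter with $c_-\in\gamma_\lambda^\Delta\subset A_\Delta$; the resulting closed separating set forces a multiply connected component $\Omega_\Delta$ around $\lambda=0$.

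This dissolves both obstacles at once. Your Obstacle~1 (uniform first-entrance time on a whole circle, and exclusion of the green capture pockets) simply does not arise: the paper never asserts that $c_-$ escapes on all of $\{|\lambda|=\rho\}$, only that it sits on the $0$--side of $\gamma_\lambda^\Delta$ there, which needs nothing beyond the labelling. Your Obstacle~2 (that the Fatou component of $c_-$ actually surrounds $0$) is built into the construction: the parameters produced are exactly those with $c_-\in A_\Delta$, and $A_\Delta$ surrounds $0$ by definition. In your framework the corresponding statement would be that, under the shadowing, every intermediate iterate $B_{a,\lambda}^j(c_-)$ with $j<n-2$ lies at scale $\gg|\lambda|^{1/5}$ and hence in $\mathcal A_{out}$, so Lemma~\ref{deforderwell} forces $c_-\in A_{n-2}$; you should make that step explicit rather than leave it as an obstacle, and you still need an argument that the orbit stays away from the $D_0$--preimages near $z_0(a,\lambda)\approx a$.

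Two further remarks. First, your quantitative shadowing, together with the round sub-annulus of Lemma~\ref{realannulus}, is plausible and might well go through, but it buys you less than the paper's argument: you would obtain only the components with pure $\mathcal A_{out}$ itinerary (the $A_n$'s), whereas the separation argument delivers $\Omega_\Delta$ for \emph{every} admissible itinerary $\Delta$, which is Theorem~\ref{thmb0}. Second, ``first-entrance time to $T_\lambda$'' is \emph{not} locally constant on the hyperbolicity locus in the way you claim: along a single $\Omega_\Delta$ it is indeed constant, but nothing a priori prevents two $\Omega_\Delta$'s with different entrance times from touching via a pinched boundary; the paper's labelling by full itineraries, not just entrance time, is what makes the components genuinely distinct.
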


These numerical experiments also show simply connected hyperbolic components which correspond to parameters for which statement a) of Theorem~A holds. They are surrounded by annular hyperbolic components which correspond to parameters for which statement b) of Theorem~A holds (see Figure~\ref{param05i} (right)). In Theorem~\ref{thmcaseB} we prove that any parameter $\lambda$ for which statement a) holds is surrounded by multiply connected hyperbolic components of parameters for which statement b) holds.

The paper is structured as follows. In Section~\ref{prelim} we introduce the main dynamical properties of the maps $B_{a,\lambda}$. Afterwards, in Section~\ref{annulardyn} we study the dynamics of the preimages of the annular Fatou component which contains the 5 critical points and zeros which appear after the singular perturbation, and analyse their dependence on $\lambda$. Finally, in Section~\ref{sectionThmAB} we prove Theorem~A and Theorem~B.

\textit{Acknowledgements.} The author would like to thank A.~Cheritat and P.~Roesch for helpful discussions, and N.~Fagella and A.~Garijo for  useful comments.

\section{Preliminaries on the dynamics of the singular perturbations}\label{prelim}


The goal of this section is to describe the dynamics of the singularly perturbed Blaschke products $B_{a,\lambda}(z)$ (Equation (\ref{eqblasperturbed})). We introduce the results proven in \cite{Can1} which are used along the paper. We first explain the dynamics of the unperturbed Blaschke products 

$$
B_a(z)=z^3\frac {z-a}{1-\overline{a}z}
$$

\noindent for $a\in\dis^*$. The maps $B_a$ have $z=0$ and $z=\infty$ as superattracting fixed points of local degree 3.  If $a\in\dis^*$, then they have a single zero, $z_0(a)=a\in\dis^*$, and a single pole $z_{\infty}(a)=1/\overline{a}\in\com\setminus\dis$. Consequently, if $a\in\dis^*$ the unit disk is invariant. Therefore, the basin of attraction of $z=\infty$ is  $A_{a}(\infty)=\wcom\setminus\overline{\dis}$ and the basin of attraction of $z=0$ is  $A_{a}(0)=\dis$. Moreover, the maps $B_a$ have two free critical points $c_+(a)\in\com\setminus\dis$ and $c_-(a)\in\dis^*$ given by

$$
c_{\pm}(a):=a \cdot \frac{1}{3|a|^2}\left(2+|a|^2\pm\sqrt{(|a|^2-4)(|a|^2-1)}\right).
$$

We refer to \cite{CFG1} for a more detailed introduction to the dynamics of the Blaschke products $B_a$. After the singular perturbation, the previously described critical points, zero and pole move continuously with respect to $\lambda$. We obtain from them the critical points $c_\pm(a,\lambda)$, the zero $z_0(a,\lambda)$ and the pole $z_{\infty}(a,\lambda)=1/\overline{a}$ of the maps $B_{a,\lambda}$. The point $z=\infty$ is a permanent superattracting fixed point of $B_{a,\lambda}$ of local degree 3 for all $\lambda$ and, therefore, it is a double critical point. We denote by $A_{a,\lambda}(\infty)$ its basin of attraction and by $A^*_{a,\lambda}(\infty)$ its immediate basin of attraction, i.e.\ the connected component of $A_{a,\lambda}(\infty)$ which contains $z=\infty$.   On the other hand, the point $z=0$ becomes a double preimage of $z=\infty$ after the singular perturbation. Hence, it is a simple critical point. 
 
 So far we have described the position of 5 critical points and one zero of the maps $B_{a,\lambda}$, which have degree $6$. However, every rational map of degree 6 has $2\cdot 6-2=10$ critical points and 6 preimages of $z=0$. The following proposition describes how the remaining 5 critical points and zeros appear in an almost symmetrical position around $z=0$ after the singular perturbation.

\begin{propo}[{\cite[Proposition 2.3]{Can1}}]\label{zeroscrit}
If we fix $a\in\dis^*$, then $B_{a,\lambda}(z)$ has 5 zeros of the form $\xi(\lambda/a)^{1/5}+o(\lambda^{1/5})$, where $\xi$ denotes a fifth root of the unity and $o(\lambda^{1/5})$ is such that $\lim_{\lambda\rightarrow 0} |o(\lambda^{1/5})|/|\lambda^{1/5}|=0$. Moreover, $B_{a,\lambda}(z)$ has  5 critical points of $B_{a,\lambda}(z)$ of the form $-\xi(2\lambda/3a)^{1/5}+o(\lambda^{1/5})$.
\end{propo}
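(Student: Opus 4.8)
The plan is to locate the zeros and critical points of $B_{a,\lambda}$ near $z=0$ by rescaling and applying a standard Rouché / implicit-function argument in a shrinking neighbourhood of the pole. Writing $B_{a,\lambda}(z)=0$ and clearing denominators, the zeros of $B_{a,\lambda}$ distinct from $z_0(a,\lambda)$ are the roots of
\[
P_\lambda(z):=z^5(z-a)+\lambda(1-\overline{a}z)=0.
\]
For $\lambda=0$ this polynomial has a zero of multiplicity $5$ at the origin (the factor $z^5$) and a simple zero at $z=a$. The first step is to make the substitution $z=(\lambda/a)^{1/5}w$ (any fixed choice of fifth root), which turns $P_\lambda$ into $(\lambda/a)\bigl(-a\,w^5 + o(1) + o(1)w\bigr)$ as $\lambda\to 0$, uniformly for $w$ in a fixed compact annulus; more precisely, after dividing by $\lambda$ the equation becomes $w^5 = 1 + O(\lambda^{1/5})$. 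Hence by Rouché's theorem, for $|\lambda|$ small the rescaled polynomial has exactly $5$ roots near the five fifth roots of unity $\xi$, each of the form $w=\xi+o(1)$; translating back gives the $5$ zeros $\xi(\lambda/a)^{1/5}+o(\lambda^{1/5})$. One should also check that these $5$ roots together with $z_0(a,\lambda)=a+o(1)$ account for all $6$ zeros of the degree-$6$ map $B_{a,\lambda}$, which is immediate since $\deg P_\lambda = 6$.

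For the critical points one argues the same way with the numerator of $B_{a,\lambda}'(z)$. A direct computation gives
\[
B_{a,\lambda}'(z) = \frac{N_\lambda(z)}{z^3(1-\overline{a}z)^2},
\]
where $N_\lambda$ is a polynomial; collecting the terms that survive after rescaling $z=(2\lambda/3a)^{1/5}w$ one finds that the lowest-order balance is between the contribution of $-2\lambda/z^3$ differentiated (giving $6\lambda/z^3$, i.e.\ order $\lambda^{2/5}$ after rescaling the $w^{-3}$... ) and the derivative of the $z^3\frac{z-a}{1-\overline a z}$ part near $0$, which to leading order is $4z^3\cdot(-a)$ (order $\lambda^{4/5}$)... so matching $4(-a)z^3 = -(-2)\lambda/z^3 \cdot 3$? — the upshot of this bookkeeping is the relation $z^5 = \tfrac{2\lambda}{3a}\cdot(\text{unit})$, hence the five critical points $-\xi(2\lambda/3a)^{1/5}+o(\lambda^{1/5})$ after the same Rouché argument on the rescaled $N_\lambda$. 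Again a degree count confirms these are all the critical points not already accounted for ($z=\infty$ double, $z=0$ simple, $c_\pm(a,\lambda)$), totalling $2\cdot6-2=10$.

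The main obstacle is purely the computational bookkeeping of the rescaling: one must verify that under $z=(\lambda/a)^{1/5}w$ (resp.\ $z=(2\lambda/3a)^{1/5}w$) the "error" terms in $P_\lambda$ (resp.\ $N_\lambda$) are genuinely $o(1)$ uniformly on a fixed annulus $\{r\le|w|\le R\}$ bounded away from $0$ and $\infty$, so that Rouché applies with the dominant term $w^5-(\text{const})$; and that the $o(\lambda^{1/5})$ estimate is uniform in the choice of fifth root $\xi$. Once the leading balance is identified correctly — in particular getting the constants $1$ and $2/3$ right, which comes from comparing the coefficient of $z^5$ in $z^5(z-a)$ against $\lambda(1-\overline a z)\sim\lambda$, and from differentiating $z^4$ versus $\lambda z^{-2}$ — the Rouché step and the back-substitution are routine. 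No compactness issues arise because everything takes place in an arbitrarily small punctured neighbourhood of $z=0$, where $(1-\overline a z)$ is bounded away from $0$.
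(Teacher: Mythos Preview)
The paper does not prove this proposition: it is quoted from \cite[Proposition~2.3]{Can1} and used as a black box, so there is no argument in the paper to compare yours against. Your overall strategy --- clear denominators, rescale by the appropriate power of $\lambda$, and apply Rouch\'e on a fixed annulus in the rescaled variable --- is the standard and correct route, and is almost certainly what \cite{Can1} does. Your treatment of the zeros is clean and correct: after the substitution $z=(\lambda/a)^{1/5}w$ the equation $z^5(z-a)+\lambda(1-\overline{a}z)=0$ becomes $-\lambda w^5+\lambda+O(\lambda^{6/5})=0$, i.e.\ $w^5=1+o(1)$, and Rouch\'e gives the five roots $w=\xi+o(1)$.

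There is, however, a genuine slip in your critical-point bookkeeping, even though you assert the right final answer. The singular part of $B_{a,\lambda}'$ is $-2\lambda/z^3$ itself (the derivative of $\lambda/z^2$); it is not to be differentiated again, so the ``$6\lambda/z^3$'' is spurious. And near $z=0$ the Blaschke part $z^3(z-a)/(1-\overline{a}z)=-az^3+O(z^4)$ has derivative $-3az^2+O(z^3)$, not ``$4z^3\cdot(-a)$''. Balancing $-3az^2$ against $2\lambda/z^3$ (or, equivalently, clearing denominators by $z^3(1-\overline{a}z)^2$ to get a polynomial whose lowest-order terms are $-3az^5-2\lambda$) gives
\[
z^5=-\frac{2\lambda}{3a}+o(\lambda),
\]
which is where both the constant $2/3$ and the minus sign in the statement come from. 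With the substitution $z=(2\lambda/3a)^{1/5}w$ the equation becomes $w^5=-1+o(1)$, and Rouch\'e then yields the five critical points $-\xi(2\lambda/3a)^{1/5}+o(\lambda^{1/5})$. Once this arithmetic is corrected, the argument is complete; the ``matching'' line you wrote would instead produce a sixth-power relation with the wrong constant, so it should be replaced by the computation above.
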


We are particularly interested on the dynamics of the maps $B_{a,\lambda}$ for small parameters $\lambda$. The following theorem describes their dynamics near $z=0$ and in the immediate basin of attraction of $z=\infty$ (see Figure~\ref{esquemacritzeros}).

\begin{thm}[{\cite[Theorem 2.5]{Can1}}]\label{thmcritzeros}

Fix $a\in\dis^*$. Then, there is a constant $\mathcal{C}(a)$ such that if $\lambda\in\dis_{{C}(a)}^*$  the following hold.

\begin{enumerate}[a)]
\item The immediate basin of attraction of $\infty$, $A_{a,\lambda}^*(\infty)$,  is simply connected and $\partial A_{a,\lambda}^*(\infty)$ is a quasicircle. Moreover, $A_{a,\lambda}^*(\infty)$ is mapped with degree $4$ onto itself and contains only a pole $z_{\infty}$ and a critical point $c_+(a,\lambda)$ other than the superattracting fixed point $z=\infty$.

\item There is a simply connected Fatou component $T_0(a,\lambda)$ which contains $z=0$ and is mapped 2 to 1 onto $A_{a,\lambda}^*(\infty)$.

\item There is an annular Fatou component $A_0(a,\lambda)$ which contains 5 critical points and 5 preimages of $z=0$ and is mapped 5 to 1 onto $T_0(a,\lambda)$.

\item The annular region in between $A_0(a,\lambda)$ and $ A_{a,\lambda}^*(\infty)$ contains a critical point $c_-(a,\lambda)$ and a zero $z_0(a,\lambda)$. Moreover, the connected component $D_{0}(a,\lambda)$ of $B_{a,\lambda}^{-1}(T_0(a,\lambda))$ in which  $z_0(a,\lambda)$ lies is simply connected and is mapped with degree 1 onto $T_0(a,\lambda)$. Consequently, it does not contain the critical point $c_-(a,\lambda)$.

\end{enumerate}
\end{thm}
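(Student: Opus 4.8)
The plan is a perturbative analysis resting on two complementary asymptotics: away from $z=0$ one has $B_{a,\lambda}(z)-B_a(z)=\lambda/z^2\to 0$ in $C^\infty$ on every fixed neighbourhood of $\cercle$, so $B_{a,\lambda}$ is a small perturbation of the hyperbolic Blaschke product $B_a$; while near $z=0$ the map is governed by the term $\lambda/z^2$. Throughout I fix $a\in\dis^*$ and take $|\lambda|$ small enough that the scales $|\lambda|^{1/2}$, $|\lambda|^{1/5}$ and order $1$ are well separated, shrinking $\mathcal{C}(a)$ at each step as needed.

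For (a) I would argue as follows. Since $\mathcal{J}(B_a)=\cercle$ is an expanding invariant circle on which $B_a$ acts as a degree‑$4$ covering, and $B_{a,\lambda}\to B_a$ in $C^1$ on a fixed annulus around $\cercle$, persistence of hyperbolic sets gives a $B_{a,\lambda}$-invariant topological circle $\gamma_\lambda$ close to $\cercle$ with $B_{a,\lambda}|_{\gamma_\lambda}$ conjugate to $B_a|_{\cercle}$; holomorphic dependence on $\lambda$ together with the $\lambda$-lemma makes it a quasicircle. Let $\Omega_\lambda$ be the component of $\wcom\setminus\gamma_\lambda$ containing $\infty$: it is simply connected and forward invariant, it contains the poles $\infty$ (order $3$) and $z_\infty=1/\overline{a}$ (order $1$) but not the third pole $z=0$ (which lies well inside $\gamma_\lambda$), so $B_{a,\lambda}\colon\Omega_\lambda\to\Omega_\lambda$ is proper of degree $4$, and Riemann--Hurwitz forces its critical points to be exactly $\infty$ (multiplicity $2$) and $c_+(a,\lambda)$ (close to $c_+(a)$). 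Since both are attracted to $\infty$ — the second by continuity from $B_a$ — a standard argument identifies $\Omega_\lambda$ with the immediate basin $A^*_{a,\lambda}(\infty)$, giving (a).

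Next, for (b) I would expand $B_{a,\lambda}(z)=\tfrac{\lambda}{z^2}\bigl(1+\tfrac{z^5}{\lambda}\tfrac{z-a}{1-\overline{a}z}\bigr)$; on $\{|z|\le c|\lambda|^{1/2}\}$ the correction factor is $O(|\lambda|^{3/2})$, so $B_{a,\lambda}$ sends $\partial\dis_{c|\lambda|^{1/2}}$ two‑to‑one onto a Jordan curve of modulus $\approx c^{-2}$, which for small $c$ lies outside $\gamma_\lambda$. Hence the component $T_0(a,\lambda)$ of $B_{a,\lambda}^{-1}(A^*_{a,\lambda}(\infty))$ containing $0$ is a simply connected Jordan domain with boundary near $\partial\dis_{|\lambda|^{1/2}}$, mapped properly and $2$‑to‑$1$ onto $A^*_{a,\lambda}(\infty)$; a component of the preimage of a Fatou component being a Fatou component, $T_0$ is a Fatou component, and Riemann--Hurwitz pins its unique critical point at $z=0$. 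For (c) the decisive point is the transitional scale $|z|\asymp|\lambda|^{1/5}$: I would substitute $z=(\lambda/a)^{1/5}\zeta$, which turns $B_{a,\lambda}$ into $\lambda^{3/5}a^{2/5}(\zeta^{-2}-\zeta^{3})+o(\lambda^{3/5})$, with model $g(\zeta)=a^{2/5}(1-\zeta^5)/\zeta^{2}$ — a degree‑$5$ rational map whose two poles are $0$ and $\infty$, whose zeros are exactly the fifth roots of unity, and whose critical points are exactly the fifth roots of $-2/3$; pulling these back recovers the zeros $\xi(\lambda/a)^{1/5}$ and critical points $-\xi(2\lambda/3a)^{1/5}$ of Proposition~\ref{zeroscrit}. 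Since the image scale $|\lambda|^{3/5}$ lies deep inside $T_0$, the preimage of $T_0$ here is $g^{-1}$ of a large disc, namely an annulus omitting small neighbourhoods of the two poles. Concretely I expect $B_{a,\lambda}^{-1}(\partial T_0)$ to consist of a Jordan curve near $\partial\dis_{|\lambda|^{1/4}}$ on which $B_{a,\lambda}\approx\lambda/z^2$ covers $\partial T_0$ as a degree‑$2$ map, a Jordan curve near $\partial\dis_{|\lambda|^{1/6}}$ on which $B_{a,\lambda}\approx -az^{3}$ covers $\partial T_0$ as a degree‑$3$ map, and one small loop around $z_0(a,\lambda)$; then the annular region $A_0(a,\lambda)$ between the first two curves is a Fatou component mapped properly onto $T_0$ with degree $2+3=5$, and Riemann--Hurwitz shows it carries exactly the $5$ critical points and the $5$ preimages of $0$ at scale $|\lambda|^{1/5}$ — those of Proposition~\ref{zeroscrit}. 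This gives (c).

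Finally, (d) is a bookkeeping step: $B_{a,\lambda}^{-1}(T_0)$ has total degree $6$, of which $A_0$ uses $5$, so there is exactly one further component $D_0(a,\lambda)$, mapped with degree $1$ and hence a conformal isomorphism onto $T_0$; in particular $D_0$ is simply connected and contains no critical point, so not $c_-(a,\lambda)$. Since $B_{a,\lambda}\to B_a$ off $0$ and $B_a^{-1}(0)=\{0,z_0(a)=a\}$, this $D_0$ sits near $z_0(a,\lambda)=z_0(a)+o(1)$, which — together with $c_-(a,\lambda)=c_-(a)+o(1)\in\dis^*$ — lies in the annular region between $A_0$ and $A^*_{a,\lambda}(\infty)$ once $|\lambda|$ is small. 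The hard part will be (c): making the estimates at the transitional scale $|\lambda|^{1/5}$, where neither $\lambda/z^2$ nor the $B_a$‑part dominates, uniform enough to identify $A_0$ as a single annular component of exact degree $5$ and to exclude any other component of $B_{a,\lambda}^{-1}(T_0)$ near $0$, thereby reconciling the degree‑$4$ dynamics of $B_a$ with the degree‑$6$ map $B_{a,\lambda}$. Parts (a), (b) and (d) then follow from the standard hyperbolic‑perturbation and Riemann--Hurwitz arguments together with this degree count.
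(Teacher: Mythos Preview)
The paper does not prove this theorem: it is quoted verbatim from \cite{Can1} (see the bracket ``\cite[Theorem~2.5]{Can1}'' in the theorem header and the sentence ``We introduce the results proven in \cite{Can1} which are used along the paper'' at the start of Section~\ref{prelim}), and no proof appears here. So there is nothing in the present paper to compare your argument against.

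That said, the paper does contain partial corroboration of your outline. Your scale analysis in (c) is consistent with Proposition~\ref{zeroscrit} (zeros and critical points at scale $|\lambda|^{1/5}$) and with Lemma~\ref{realannulus}, which places a round annulus of radii $\asymp|\lambda|^{1/5}$ inside $A_0$; your predicted boundary scales $|\lambda|^{1/4}$ and $|\lambda|^{1/6}$ sandwich $|\lambda|^{1/5}$ as they should. Your approach to (a) via persistence of the hyperbolic circle is close in spirit to the polynomial-like construction the paper uses later in Proposition~\ref{contboundary}, where $\partial A^*_{a,\lambda}(\infty)$ is obtained as the Julia set of a $\mathcal{J}$-stable degree-$4$ polynomial-like family, and the paper remarks explicitly that $\partial A^*_{a,\lambda}(\infty)$ is a continuous deformation of $\cercle$. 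The degree bookkeeping in (d) matches what the paper asserts. So your sketch is plausible and aligned with the hints scattered through Section~\ref{prelim}, but a comparison with ``the paper's own proof'' is not possible because there is none here.
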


\begin{figure}[hbt!]
\centering
\def\svgwidth{280pt}
\subimport{figures/}{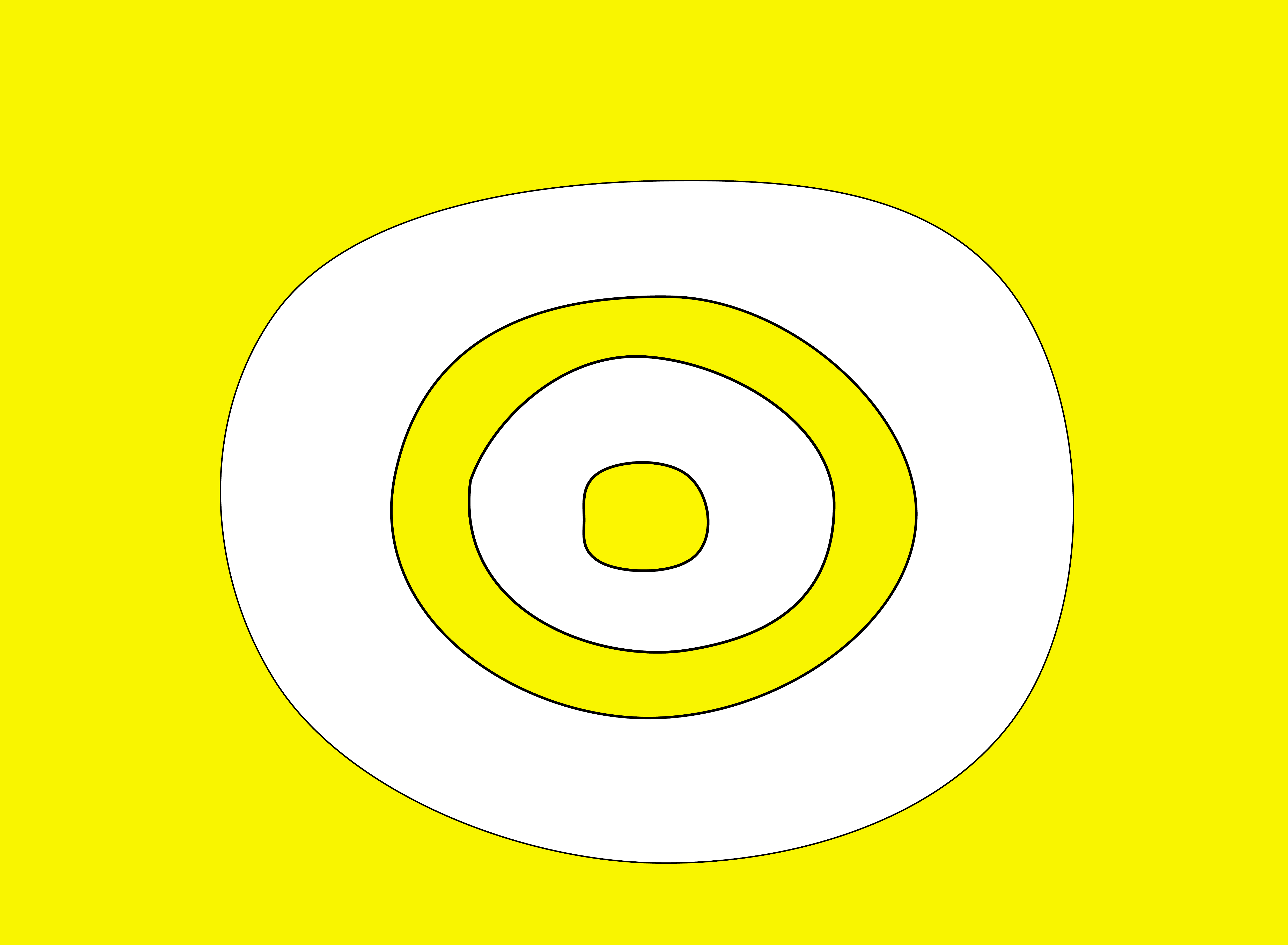_tex}
\caption{\small Summary of the dynamics described in Theorem~\ref{thmcritzeros}. We draw in red the preimages of zero and in black the critical points. }
\label{esquemacritzeros}
\end{figure}

Whenever it is clear from the context, we drop the dependence on the parameters $a$ and $\lambda$ of the different sets and points introduced in Theorem~\ref{thmcritzeros}.
It follows from this theorem that if $\lambda\in\dis_{{C}(a)}^*$, where $\mathcal{C}(a)$ depends on $a$, then the dynamics of the maps $B_{a,\lambda}$ is essentially unicritical:  all critical orbits but, maybe, the orbit of the point $c_-$ accumulate on the superattracting fixed point $z=\infty$. Since $z=0$ is the only pole outside $ A^*(\infty)$, we have that if a point $w$ belongs to $A(\infty)\setminus\{A^*(\infty)\cup T_0\}$, then $w$ is eventually mapped under iteration of $B_{a,\lambda}$ into the the annular domain $A_0$ or the disk $D_0$. In Lemma~\ref{fatouinfinit} we show how the connectivity of a Fatou component $U$ depends on whether $U$ is eventually mapped onto $A_0$ or $D_0$. In the proof of Lemma~\ref{fatouinfinit} we use the Riemann-Hurwitz formula (c.f.\ \cite{Mi1, Ste}), which  can be stated as follows. 

\begin{teor}[Riemann-Hurwitz formula]\label{riemannhurwitz}
Let $U$ and $V$ be two connected domains  of $\widehat{\com}$ of finite connectivity $m_{U}$ and $m_{V}$ and let $f:U\rightarrow V$ be a degree $k$ proper map branched over $r$ critical points counted with multiplicity. Then
$$m_{U}-2=k(m_{V}-2)+r.$$
\end{teor}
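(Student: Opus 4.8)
The plan is to derive the formula from the behaviour of the Euler characteristic under branched coverings. First I would record the topological fact that a domain $D\subseteq\wcom$ of finite connectivity $m$ satisfies $\chi(D)=2-m$: by Alexander duality $H_1(D;\enter)\cong\enter^{m-1}$ (the rank counting the $m-1$ independent loops around the $m$ complementary continua), while $H_2(D)=0$ when $D$ is an open surface, i.e.\ $D\neq\wcom$ (and $\chi(\wcom)=2=2-0$ covers the remaining case). Thus the claimed identity $m_U-2=k(m_V-2)+r$ is equivalent to $\chi(U)=k\,\chi(V)-r$.

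Second, I would use that a proper holomorphic map $f\colon U\to V$ of degree $k$ is a branched covering: its set $\Sigma\subset V$ of critical values is finite, $f^{-1}(\Sigma)$ is finite, and $f\colon U\setminus f^{-1}(\Sigma)\to V\setminus\Sigma$ is a genuine $k$-sheeted covering. Because arbitrary boundary continua are inconvenient for cell-counting, I would first exhaust $V$ (assuming $V\neq\wcom$, the sphere case being classical) by subdomains $V'\Subset V$ bounded by finitely many analytic Jordan curves and containing $\Sigma$; then $U'=f^{-1}(V')$ gives proper maps $f\colon U'\to V'$ of the same degree $k$, ramified over exactly the same critical points (hence the same $r$), between compact bordered surfaces, with $\chi(U')=\chi(U)$ and $\chi(V')=\chi(V)$ once $V'$ is close enough to $V$. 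It therefore suffices to prove $\chi(U')=k\,\chi(V')-r$.

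Third I would run the classical cell count. Put a finite CW structure on $\overline{V'}$ with $\Sigma$ contained in the $0$-skeleton and fine enough to be subordinate to the covering over $V'\setminus\Sigma$, and lift it by $f$ to a CW structure on $\overline{U'}$. Away from $\Sigma$ each open $i$-cell has exactly $k$ preimage cells, so upstairs the numbers of $1$- and $2$-cells are $k$ times those downstairs. Over a critical value $v$ the number of preimage $0$-cells is $\sum_{w\in f^{-1}(v)}1=k-\sum_{w\in f^{-1}(v)}(\deg_w f-1)$, using $\sum_{w\in f^{-1}(v)}\deg_w f=k$. Summing over $v\in\Sigma$ and adding the $k$-to-$1$ contribution of the ordinary vertices shows that the number of $0$-cells upstairs is $k$ times that downstairs minus $r=\sum_{w\in\mathrm{Crit}(f)}(\deg_w f-1)$. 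Taking the alternating sum gives $\chi(U')=k\,\chi(V')-r$; combined with $\chi=2-m$ this reads $2-m_U=k(2-m_V)-r$, i.e.\ $m_U-2=k(m_V-2)+r$.

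The only genuinely delicate step is the reduction to the bordered subdomains $U',V'$ together with the stabilisation of $\chi$, $k$ and $r$ along the exhaustion; once one is allowed to assume $U$ and $V$ are bordered surfaces bounded by finitely many analytic curves, the remaining argument is routine bookkeeping. In the actual write-up I would instead simply invoke the topological Riemann--Hurwitz theorem for branched covers of surfaces as in \cite{Mi1} or \cite{Ste}, whose content is precisely the cell count above.
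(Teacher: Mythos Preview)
Your argument is correct and is essentially the standard derivation of the Riemann--Hurwitz formula via Euler characteristic and cell counting. Note, however, that the paper does not actually prove this statement: it is quoted as a classical result with references to \cite{Mi1} and \cite{Ste}, and is used as a tool throughout the rest of the paper. So there is no ``paper's own proof'' to compare against; your write-up would serve as a self-contained justification where the paper simply cites the literature.
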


\begin{lemma}\label{fatouinfinit}
Fix $a\in\dis^*$ and $\lambda\in\dis_{{C}(a)}^*$. Let  $U\subset A(\infty)$ be a Fatou component of $B_{a,\lambda}$ other than $A^*(\infty)$, $T_0$, $A_0$, or $D_0$. Then, exactly one of the following holds.

\begin{itemize}
\item  The set $U$ is simply connected and is eventually mapped under iteration onto $D_0$.

\item  The set $U$ is multiply connected and is eventually mapped under iteration onto $A_0$. Moreover, if $U$ surrounds $z=0$ then $B_{a,\lambda}(U)$ also does.

\item If $c_-$ belongs to a Fatou component which is eventually mapped onto $A_0$, say $U_c$, then $U_c$ is triply connected. 
\end{itemize}
 
\end{lemma}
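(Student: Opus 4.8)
\emph{Strategy.} The plan is to follow the forward orbit of $U$ inside $A(\infty)$, use Theorem~\ref{thmcritzeros} to pin down its shape, and then read off connectivities by applying the Riemann--Hurwitz formula (Theorem~\ref{riemannhurwitz}) \emph{backwards} along the orbit. First I would note that $A^*(\infty)$ is the only periodic Fatou component inside $A(\infty)$, so, $U$ being preperiodic, its orbit reads $U=U_0\to U_1\to\cdots\to U_N=A^*(\infty)$ with $N$ minimal. Since $\deg(B_{a,\lambda}|_{A^*(\infty)})+\deg(B_{a,\lambda}|_{T_0})=4+2=6=\deg B_{a,\lambda}$, Theorem~\ref{thmcritzeros}(a)--(b) forces $B_{a,\lambda}^{-1}(A^*(\infty))=A^*(\infty)\sqcup T_0$, hence $U_{N-1}=T_0$; likewise $\deg(B_{a,\lambda}|_{A_0})+\deg(B_{a,\lambda}|_{D_0})=5+1=6$ gives $B_{a,\lambda}^{-1}(T_0)=A_0\sqcup D_0$, hence $U_{N-2}\in\{A_0,D_0\}$. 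As $U\notin\{A^*(\infty),T_0,A_0,D_0\}$ we get $N\ge 3$, and $T_0$, $A^*(\infty)$ and whichever of $A_0$, $D_0$ is reached each occur exactly once along the orbit; in particular $U$ is eventually mapped onto exactly one of $A_0$, $D_0$.

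\emph{Critical points and the dichotomy.} Next I would account for critical points. With multiplicity, the ten critical points of $B_{a,\lambda}$ are $\infty$ (double) and $c_+$ in $A^*(\infty)$, $z=0$ in $T_0$, the five points of Proposition~\ref{zeroscrit} in $A_0$, and $c_-$; so $c_-$ is simple and is the only critical point outside $A^*(\infty)\cup T_0\cup A_0$. Consequently, along $U_0,\dots,U_{N-3}$ every $B_{a,\lambda}\colon U_i\to U_{i+1}$ is unbranched, except for the unique index $j$ (if any) with $c_-\in U_j$, where there is one simple critical point. Now apply Riemann--Hurwitz from $i=N-2$ downwards. If $U_{N-2}=D_0$ (simply connected): an unbranched proper map onto a simply connected domain satisfies $m_{U_i}-2=d(1-2)$, forcing $d=1$, $m_{U_i}=1$, while at $i=j$ one has $m_{U_j}-2=d(1-2)+1$, forcing $d=2$, $m_{U_j}=1$; so $U$ is simply connected. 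If $U_{N-2}=A_0$ (connectivity $2$): $m_{U_i}=2$ (from $m_{U_i}-2=d\cdot 0$) until $i=j$, where $m_{U_j}-2=d\cdot 0+1$ gives $m_{U_j}=3$, and $m_{U_i}-2=d_i(m_{U_{i+1}}-2)\ge 1$ for $i<j$; so $U$ is multiply connected. This is precisely the dichotomy between the first two items, $U$ landing in one or the other according as its orbit reaches $D_0$ or $A_0$.

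\emph{The component of $c_-$ and the surrounding clause.} For the third item I would run the same computation along the orbit $U_c=W_0\to\cdots\to W_M=A_0$ of the component of $c_-$ (the case where the hypothesis applies): the $W_1,\dots,W_{M-1}$ lie outside $A^*(\infty),T_0,A_0,D_0$ and hence carry no critical point, so $m_{W_i}=2$ for $1\le i\le M$; then $B_{a,\lambda}\colon U_c\to W_1$ has exactly the simple critical point $c_-$, whence $m_{U_c}-2=d\cdot 0+1$, i.e.\ $U_c$ is triply connected. Finally, for the ``$U$ surrounds $z=0$'' clause (so $U$ is multiply connected, eventually mapped onto $A_0$, and surrounds $z=0$), I would choose a Jordan curve $\gamma\subset U$ with $z=0$ in one complementary component and $\infty$ in the other. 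Since $U$ is none of $T_0$, $A_0$, $D_0$, it contains no pole and no zero of $B_{a,\lambda}$, so $\gamma$ avoids them, and by the argument principle the winding number of $B_{a,\lambda}\circ\gamma$ about $0$ equals $Z-P$, where $Z$, $P$ count with multiplicity the zeros, poles of $B_{a,\lambda}$ on the side of $\gamma$ containing $0$. As $A^*(\infty)$ is connected, disjoint from $\gamma$, and contains $\infty$, it lies on the other side, so the simple pole $z_\infty$ is not counted, only the double pole at $z=0$ is, and $P=2$. The five zeros inside $A_0$ and the one inside $D_0$ each lie wholly on one side of $\gamma$, so $Z\in\{0,1,5,6\}$; hence the winding number is $Z-2\ne 0$, so $z=0$ lies in a bounded complementary component of $B_{a,\lambda}(\gamma)$, hence of $B_{a,\lambda}(U)$, and as $\infty\notin B_{a,\lambda}(U)$ (because $U\ne A^*(\infty),T_0$) this says $B_{a,\lambda}(U)$ surrounds $z=0$.

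\emph{Where the difficulty lies.} The first two steps are bookkeeping with the data of Theorem~\ref{thmcritzeros} and Proposition~\ref{zeroscrit}, and the Riemann--Hurwitz inductions are routine. I expect the main obstacle to be the last clause: one must take care with the orientation of $\gamma$ and with which complementary component is the bounded one, and the whole argument hinges on the zeros of $B_{a,\lambda}$ splitting into the five inside $A_0$ and the single one inside $D_0$ --- exactly what keeps $Z\ne P=2$, and hence the winding number nonzero.
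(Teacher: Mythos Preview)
Your proof is correct and follows the paper's approach closely for the main dichotomy: both arguments track the forward orbit of $U$ until it hits $D_0$ or $A_0$, then pull back connectivities via Riemann--Hurwitz, using that $c_-$ is the only critical point outside $A^*(\infty)\cup T_0\cup A_0$. Your degree-counting justification that $B_{a,\lambda}^{-1}(A^*(\infty))=A^*(\infty)\sqcup T_0$ and $B_{a,\lambda}^{-1}(T_0)=A_0\sqcup D_0$ is more explicit than what the paper writes, but the content is the same.

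The one genuine difference is the ``surrounds $z=0$'' clause. The paper argues the \emph{contrapositive}: if $U$ does not surround $z=0$, then the filled region $\mathrm{Bdd}(U)$ is simply connected and contains at most the single critical value $B_{a,\lambda}(c_-)$, so by Riemann--Hurwitz every preimage component of $\mathrm{Bdd}(U)$ is again simply connected and (since $0\mapsto\infty$) avoids $z=0$; hence no preimage of $U$ surrounds $z=0$. You instead prove the \emph{direct} implication via the argument principle, exploiting the exact distribution of zeros ($5$ in $A_0$, $1$ in $D_0$) and poles (double at $0$, simple at $z_\infty\in A^*(\infty)$) to get winding number $Z-2\in\{-2,-1,3,4\}$, hence nonzero. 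Both are valid; the paper's route stays within the Riemann--Hurwitz framework and needs no arithmetic on the zero configuration, while yours is self-contained at the cost of that case check. Your own diagnosis that this step is the delicate one is accurate, and your handling of it is fine.
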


\proof
The Fatou component $U$ is eventually mapped under iteration onto $D_0$ or $A_0$ since those are the only preimages of $T_0$, and $T_0$ is the only preimage of $A^*(\infty)$ other than itself. It follows from the Riemann-Hurwitz formula (see Theorem~\ref{riemannhurwitz}) that at least two critical points are required to map a multiply connected domain onto a simply connected domain. Since $D_0$ is simply connected and $c_-$ is the only free critical point, we can conclude that all iterated preimages of $D_0$ are simply connected.

Assume that $U$ is eventually mapped onto $A_0$. Since $A_0$ has connectivity $2$, it follows that $U$ has, at least, connectivity $2$ and, hence, it is multiply connected. Indeed, it follows from the Riemann-Hurwitz formula that if a preimage of a doubly connected domain contains no critical point, then it is also doubly connected. Hence, all preimages of $A_0$ are doubly connected until one of them, say $U_c$, contains the critical point $c_-$. It follows again from the Riemann-Hurwitz formula that $U_c$ has connectivity 3. Moreover, it can be proven that if $U_c$ surrounds $z=0$, then it has iterated preimages of arbitrarily large connectivity (see \cite[Proposition 3.1]{Can1}, c.f.\ Theorem~\ref{thmavell}).

 To finish the proof of the lemma it is enough to show that if $U$ does not surround $z=0$, then none of its preimages does. Let $\rm{Bdd}(U)$ denote the set of all points bounded by $U$ (including $U$). Then $\rm{Bdd}(U)$ is a simply connected domain which can contain at most the critical value $B_{a,\lambda}(c_-)$. It follows from the Riemann-Hurwitz formula that all preimages of $\rm{Bdd}(U)$ are simply connected. Moreover, none of them can contain the point $z=0$. Consequently, no preimage of $U$ can surround $z=0$.
\endproof

We finish the preliminaries with the following result. It shows that, if $\lambda\in\dis_{{C}(a)}^*$, then there is a straight annulus inside $A_0$.  Within the paper \cite{Can1} this is presented as one of the conditions to obtain the constant $\mathcal{C}(a)$ in the proof of Theorem 2.5. Even if we could avoid using it in this paper, it helps us understand how the annulus $A_0$ shrinks when $\lambda$ tends to 0.

\begin{lemma}\label{realannulus}
Let $a\in\dis^*$. If $\lambda\in\dis_{{C}(a)}^*$, then $A_0$ contains the straight annulus of inner radius $\left(\frac{|\lambda|}{2|a|}\right)^{1/5}$ and outer radius $\left(\frac{2|\lambda|}{|a|}\right)^{1/5}$.
\end{lemma}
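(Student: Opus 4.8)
The plan is to reduce the statement to the single inclusion $B_{a,\lambda}(\mathcal{A})\subset T_0$, where $\mathcal{A}$ is the straight annulus
$$
\mathcal{A}:=\Big\{\,z\in\com:\ \big(\tfrac{|\lambda|}{2|a|}\big)^{1/5}<|z|<\big(\tfrac{2|\lambda|}{|a|}\big)^{1/5}\,\Big\}
$$
from the statement. (Note that, for small $\lambda$, $\mathcal{A}$ contains all $5$ zeros $\xi(\lambda/a)^{1/5}+o(\lambda^{1/5})$ and all $5$ critical points $-\xi(2\lambda/3a)^{1/5}+o(\lambda^{1/5})$ of $B_{a,\lambda}$ near $z=0$ given by Proposition~\ref{zeroscrit}, which is consistent with $\mathcal{A}\subset A_0$.) Granting the inclusion $B_{a,\lambda}(\mathcal{A})\subset T_0$, we get $\mathcal{A}\subset B_{a,\lambda}^{-1}(T_0)$; by Theorem~\ref{thmcritzeros}(c)--(d) and the degree count $5+1=6=\deg B_{a,\lambda}$, the connected components of $B_{a,\lambda}^{-1}(T_0)$ are exactly $A_0$ and $D_0$, so the connected set $\mathcal{A}$ lies in one of them. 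It cannot lie in $D_0$: by Theorem~\ref{thmcritzeros}(d) that component is simply connected, whereas $\mathcal{A}$ contains the Jordan curve $\{|z|=(|\lambda|/|a|)^{1/5}\}$ around $z=0$, so simple connectivity of $D_0$ would force the Jordan domain bounded by that curve, and in particular $z=0$, to lie in $D_0$; this is impossible since $B_{a,\lambda}(0)=\infty\notin T_0$. Hence $\mathcal{A}\subset A_0$, as claimed.

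To prove $B_{a,\lambda}(\mathcal{A})\subset T_0$ I would first record that $T_0$ contains a round disk of radius of order $|\lambda|^{1/2}$. Writing $B_{a,\lambda}(z)=\lambda z^{-2}+z^3(z-a)(1-\overline{a}z)^{-1}$, on the punctured disk $\dis^*_{\rho}$ with $\rho=(|\lambda|/4)^{1/2}$ the pole term dominates: $|\lambda z^{-2}|\ge 4$, while $|z^3(z-a)(1-\overline{a}z)^{-1}|=O(|\lambda|^{3/2})$ (with constant depending only on $a$) and is therefore $<1$ once $|\lambda|<\mathcal{C}(a)$; hence $|B_{a,\lambda}(z)|>2$ on $\dis^*_{\rho}$. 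Since for $|\lambda|<\mathcal{C}(a)$ the set $A^*(\infty)$ is a quasidisk close to $\wcom\setminus\overline{\dis}$ (Theorem~\ref{thmcritzeros}(a) and its proof in \cite{Can1}), it contains the fixed neighbourhood $\{|w|>2\}$ of $\infty$; together with $B_{a,\lambda}(0)=\infty\in A^*(\infty)$ this shows $\dis_{\rho}$ is a connected subset of $B_{a,\lambda}^{-1}(A^*(\infty))$ containing $z=0$. As $B_{a,\lambda}^{-1}(A^*(\infty))=A^*(\infty)\sqcup T_0$ (degree count $4+2=6$), we conclude $\dis_{(|\lambda|/4)^{1/2}}\subset T_0$.

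Next I would estimate $B_{a,\lambda}$ on $\mathcal{A}$ through the near-pole expansion. From $\big|\tfrac{z-a}{1-\overline{a}z}+a\big|=\tfrac{|z|(1-|a|^2)}{|1-\overline{a}z|}\le 2|z|$, valid on $\mathcal{A}$ provided $|\lambda|<\mathcal{C}(a)$ (so that $|\overline{a}z|\le 1/2$ there), one writes $B_{a,\lambda}(z)=z^{-2}\big(\lambda-az^5+E(z)\big)$ with $|E(z)|\le 2|z|^6$, which is $o(|\lambda|)$ on $\mathcal{A}$. The inner radius of $\mathcal{A}$ gives $|z|^2\ge\big(\tfrac{|\lambda|}{2|a|}\big)^{2/5}$, and the outer radius gives $|\lambda-az^5|\le|\lambda|+|a|\,|z|^5\le 3|\lambda|$; hence $|B_{a,\lambda}(z)|\le C(a)\,|\lambda|^{3/5}$ on $\mathcal{A}$ for a constant $C(a)$. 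Since $3/5>1/2$, for $|\lambda|<\mathcal{C}(a)$ (decreasing $\mathcal{C}(a)$ if necessary) we have $C(a)\,|\lambda|^{3/5}<(|\lambda|/4)^{1/2}$, so $B_{a,\lambda}(\mathcal{A})\subset\dis_{(|\lambda|/4)^{1/2}}\subset T_0$, completing the argument.

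The step I expect to require the most care is not any single estimate but keeping all the scale comparisons uniform in $z$ and coherent with the constant $\mathcal{C}(a)$ already fixed in \cite{Can1}: one must check that the outer radius $(2|\lambda|/|a|)^{1/5}$ is small enough that $|1-\overline{a}z|\ge 1/2$ and $|z|\le|a|$ throughout $\mathcal{A}$, and that the output scale $|\lambda|^{3/5}$ genuinely lies below the guaranteed size $\sim|\lambda|^{1/2}$ of $T_0$ (equivalently, below the threshold coming from $A^*(\infty)\supset\{|w|>2\}$). All of this holds once $|\lambda|$ is sufficiently small, which is precisely the regime in which $\mathcal{C}(a)$ is defined; indeed, as the remark preceding the statement indicates, this inclusion is one of the conditions that went into the choice of $\mathcal{C}(a)$ in \cite{Can1}, so the genuine content is just the chain of elementary inequalities sketched above.
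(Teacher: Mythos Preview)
Your argument is correct and is essentially the approach behind the statement: the paper does not prove Lemma~\ref{realannulus} here but records it as one of the defining conditions for $\mathcal{C}(a)$ established in \cite{Can1}, and your chain of estimates---showing $B_{a,\lambda}(\mathcal{A})\subset\dis_{(|\lambda|/4)^{1/2}}\subset T_0$ via the near-pole expansion $B_{a,\lambda}(z)=z^{-2}(\lambda-az^5+O(z^6))$, then using the component decomposition $B_{a,\lambda}^{-1}(T_0)=A_0\cup D_0$---is exactly how that condition is verified. Your closing paragraph correctly identifies the only delicate point, namely that the various smallness requirements on $|\lambda|$ are absorbed into the choice of $\mathcal{C}(a)$.
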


\section{Annular dynamics of the singular perturbations}\label{annulardyn}

The goal of this section is to study the dynamics of the preimages of the annulus $A_0(a,\lambda)$ (see Theorem~\ref{thmcritzeros}), to analyse how they move when we modify the parameter $\lambda$ and to label the preimages of $A_0(a,\lambda)$ which surround $z=0$. To do so we first study how much the singular perturbation changes the dynamics (see Proposition~\ref{conjblas}). Afterwards we study how the maps $B_{a,\lambda}$ act on Jordan curves surrounding $z=0$ (see Proposition\ref{preimcorbes}) and analyse the continuity of $\partial A^*_{a,\lambda}(\infty)$ with respect to $\lambda$ (see Proposition~\ref{contboundary}). Finally, we introduce a labelling of the preimages of $A_0(a,\lambda)$ which is induced by the dynamics of the maps $B_{a,\lambda}$ and allows us to order them depending on their position with respect to $z=0$ (see Definition~\ref{defordering} and Lemma~\ref{ordering}).

 Hereafter we will use the following notation. Given a Jordan curve $\eta\subset \com$, we denote by $\rm{Int}(\eta)$ the bounded component of $\com\setminus\eta$  and by $\rm{Ext}(\eta)$ the unbounded component of $\com\setminus\eta$. 
 The following proposition tells us that the singular perturbation only modifies significantly the dynamics in the region bounded by the annulus $A_0(a,\lambda)$. 

\begin{propo}\label{conjblas}
Let $a\in \dis^*$ and let $\lambda\in\dis_{{C}(a)}^*$. Then, there exists an analytic Jordan curve $\Gamma\subset A_0(a,\lambda)$ such that $B_{a,\lambda}$ is conjugate to a Blaschke product $B_{b,t}(z)=e^{2\pi i t} z^3(z-b)/(z-\overline{b})$, where $b\in \dis^*$ and $|e^{2\pi i t}|=1$,  in the annulus bounded by $\Gamma$ and $\partial A_{a,\lambda}^*(\infty)$.
\end{propo}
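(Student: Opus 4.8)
## Proof Proposal

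The plan is to exhibit the conjugacy concretely, using the structure provided by Theorem~\ref{thmcritzeros}. Recall that $A_{a,\lambda}^*(\infty)$ is simply connected with quasicircle boundary, is mapped onto itself with degree $4$, and contains exactly the pole $z_\infty$, the critical point $c_+(a,\lambda)$, and the superattracting fixed point $z=\infty$. The idea is that, on the complement of a suitable neighbourhood of $z=0$ (specifically, on $\mathrm{Ext}(\Gamma)$ where $\Gamma \subset A_0(a,\lambda)$ is a carefully chosen curve), the term $\lambda/z^2$ is a genuine holomorphic perturbation, the only pole of $B_{a,\lambda}$ in that region is $z_\infty = 1/\overline a$, and the full preimage of $A_{a,\lambda}^*(\infty)$ is $A_{a,\lambda}^*(\infty) \cup T_0$, with $T_0$ landing strictly inside $\Gamma$. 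So $B_{a,\lambda}$ restricted to the annulus $W$ bounded by $\Gamma$ and $\partial A_{a,\lambda}^*(\infty)$ behaves, as a branched cover, exactly like a degree-$4$ Blaschke product restricted to the complement of its inner disk.

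The concrete steps I would carry out are as follows. First, choose $\Gamma \subset A_0(a,\lambda)$ to be an analytic Jordan curve surrounding $z=0$ (for instance a round circle inside the straight annulus guaranteed by Lemma~\ref{realannulus}, slightly perturbed to be a level curve of the Böttcher/Green function of $z=\infty$ so that its $B_{a,\lambda}$-image is again an analytic Jordan curve surrounding $0$ and contained in the component we want). Second, let $V = \mathrm{Ext}(\Gamma) \cup \{\infty\}$, a topological disk containing $\infty$, $z_\infty$, $c_+$ and the full basin piece we care about, but \emph{not} containing $0$, $T_0$, $A_0$, nor any of the five new zeros/critical points of Proposition~\ref{zeroscrit} (all of which sit at distance $O(|\lambda|^{1/5})$ from $0$, hence inside $\Gamma$ for $|\lambda|<\mathcal C(a)$, after possibly shrinking $\mathcal C(a)$). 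On $V$ the map $B_{a,\lambda}$ is holomorphic except for the single simple-ish pole $z_\infty$; its only critical points in $V$ are $c_+$ and the double critical point $\infty$; and $B_{a,\lambda}(V) \supset V$, with $B_{a,\lambda}^{-1}(A^*(\infty)) \cap V = A^*(\infty)$ and $B_{a,\lambda}^{-1}(\overline V) \cap V$ being again a topological disk $V'$ with $\overline{V'}\subset V$. Third, invoke the standard rigidity/normalization: a degree-$4$ rational-like map of a disk neighbourhood of $\infty$ onto a larger one, with a superattracting fixed point of local degree $3$ at $\infty$, one further simple critical point $c_+$, one pole, and expanding behaviour on the boundary, can be promoted to a globally defined Blaschke product. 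Concretely, I would use a quasiconformal surgery: glue $B_{a,\lambda}|_{V'}$ to the model $z \mapsto z^3$ (or rather to a degree-$4$ Blaschke self-map of $\widehat{\mathbb C}\setminus\overline{\mathbb D}$) across the fundamental annulus $V \setminus V'$ via an equivariant interpolation, obtain a quasiregular map $g$ of $\widehat{\mathbb C}$ that is holomorphic off a single annulus, spread the standard complex structure around by $g$ and integrate it (Measurable Riemann Mapping Theorem) to get a Blaschke product $B_{b,t}$ with the claimed normal form $e^{2\pi i t} z^3 (z-b)/(z-\overline b)$; the integrating map $\varphi$ is then the desired conjugacy on $W$.

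The main obstacle is step three: verifying that the surgery produces precisely a map in the three-parameter family $e^{2\pi i t}z^3(z-b)/(z-\overline b)$ rather than some more general degree-$4$ Blaschke product. Here one must count: a degree-$4$ Blaschke product of $\widehat{\mathbb C}\setminus\overline{\mathbb D}$ has the form $e^{2\pi i t}\prod_{j=1}^4 \frac{z-b_j}{1-\overline{b_j}z}$; the constraint that $\infty$ be a fixed critical point of local degree $3$ forces three of the four factors to degenerate (pulling three zeros to $0$ and correspondingly three poles to $\infty$), leaving exactly $z^3\cdot\frac{z-b}{1-\overline b z}$ up to rotation — which after a rotation of coordinates is the stated form $e^{2\pi it}z^3(z-b)/(z-\overline b)$. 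So the real content is: (i) check that the surgery can be performed $B_{a,\lambda}$-equivariantly on the annulus $W\setminus W'$ (this is routine once $\Gamma$ is chosen as a Green level curve, since then $B_{a,\lambda}$ maps the outer boundary of the fundamental annulus to a curve and one interpolates between two annular maps of the same degree); (ii) check the local degrees at $\infty$ and the pole/critical data survive integration, which they do because $\varphi$ is conformal and hence preserves local degrees, multiplicities of critical points, and the location/order of poles; and (iii) normalize $\varphi$ (post-composing with a Möbius map fixing $\infty$) so that the resulting Blaschke product has its single finite zero at the real-symmetric position, yielding $b\in\mathbb D^*$. The genericity $b\neq 0$ follows because $B_{a,\lambda}$ is not conjugate on $W$ to $z^3$ times the identity: the free critical point $c_+(a,\lambda)$ lies in $W$ and is not at $\infty$, so its image under $\varphi$ is a genuine free critical point of $B_{b,t}$, forcing $b\neq 0$.
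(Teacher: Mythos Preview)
Your overall strategy---quasiconformal surgery to replace the dynamics inside a curve $\Gamma\subset A_0$ by the model $z\mapsto z^3$, then integrate and identify the result as a degree-$4$ Blaschke product with a triple superattracting fixed point---is exactly the paper's approach. The final identification step (any degree-$4$ Blaschke self-map of $\mathbb D$ with a local-degree-$3$ superattracting fixed point at $0$ must be of the stated form) is also how the paper finishes.

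There is, however, a genuine gap in your setup. You claim that $V':=B_{a,\lambda}^{-1}(\overline V)\cap V$ is a topological disk and that $V\setminus V'$ is ``the fundamental annulus''. This is false: you have overlooked the zero $z_0(a,\lambda)$ and the simply connected Fatou component $D_0(a,\lambda)$, both of which lie in the annular region between $A_0$ and $\partial A^*_{a,\lambda}(\infty)$ (Theorem~\ref{thmcritzeros}\,d)), hence in your $V$. Since $B_{a,\lambda}(D_0)=T_0\subset\mathrm{Int}(\Gamma)$, the whole disk $D_0$ lies in $V\setminus V'$, so $V'$ has a hole there and is not simply connected; likewise $V\setminus V'$ is not an annulus. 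Your polynomial-like framing therefore breaks down, and the ``equivariant interpolation across the fundamental annulus'' cannot be carried out as described.

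A second, related imprecision is the choice of $\Gamma$. For the gluing of $z^3$ to make sense you need $B_{a,\lambda}|_\Gamma$ to be a degree-$3$ covering onto an analytic Jordan curve $\gamma\subset\mathrm{Int}(\Gamma)$. An arbitrary round circle (or level curve) in $A_0$ does not obviously have this property: the annulus $A_0$ is mapped $5$-to-$1$ onto $T_0$, and an arbitrary essential curve in $A_0$ need not map as a cover at all. The paper fixes this by choosing $\gamma\subset T_0$ \emph{first} (surrounding $0$ and the five critical values coming from $A_0$), and then taking $\Gamma$ to be the \emph{outer} connected component $\gamma_o^{-1}$ of $B_{a,\lambda}^{-1}(\gamma)\cap A_0$; a short argument using the degree-$2$ map $T_0\to A^*(\infty)$ shows that the inner component $\gamma_i^{-1}$ covers $\gamma$ with degree $2$, hence $\Gamma=\gamma_o^{-1}$ covers $\gamma$ with degree $3$. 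With this specific $\Gamma$ the gluing is straightforward (no polynomial-like framing is needed), and the degree count---replacing degree $5$ onto $\mathrm{Int}(\gamma)$ plus degree $2$ onto $\mathrm{Ext}(\gamma)$ by degree $3$ onto $\mathrm{Int}(\gamma)$---gives a degree-$4$ quasiregular model. You should rework your construction along these lines.
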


\proof

The main idea of the proof is to perform a quasiconformal surgery which erases the extra dynamics which appear after the singular perturbation near $z=0$ and keeps the dynamics of the Blaschke products within the unbounded region delimited by an analytic curve $\Gamma\subset A_0$. We refer to \cite{Ah} and \cite{BF} for an introduction to quasiconformal mappings and the tools used in quasiconformal surgery.

 We first show how to obtain the curve $\Gamma$. Let $\gamma$ be an analytic Jordan curve in the Fatou component $T_0$ (see Theorem~\ref{thmcritzeros}) such that it surrounds $z=0$ and the images of all critical points contained in $A_0$. Then, the annulus $\mathcal{A}$ bounded by $\gamma$ and $\partial T_0$ contains no critical values and all components of $B_{a,\lambda}^{-1}(\mathcal{A})$ are annuli bounded by preimages of $\gamma$ and $\partial T_0$. Since $\partial B_{a,\lambda}^{-1}(T_0)$ consists of three connected components (notice that $\partial B_{a,\lambda}^{-1}(T_0)=\partial D_0 \cup \partial A_0$), we conclude that $\partial B_{a,\lambda}^{-1}(\mathcal{A})$ consists of three annular connected components. Two of them are in $A_0$. Since $\mathcal{A}$ does neither contain $z=0$ nor any critical value, one of the connected components of $\partial B_{a,\lambda}^{-1}(\mathcal{A})$, say $\mathcal{A}_o^{-1}$, surrounds all critical points and preimages of $z=0$ in $A_0$. The other component of $\partial B_{a,\lambda}^{-1}(\mathcal{A})$ in $A_0$, say $\mathcal{A}_i^{-1}$, does not surround any zero or critical point in $A_0$. We denote by $\gamma_i^{-1}$ and $\gamma_o^{-1}$ the connected components of $\partial\mathcal{A}_{i}^{-1}$ and $\partial\mathcal{A}_{o}^{-1}$ contained in $A_0$ (see Figure~\ref{esquemacorbes}). The curves $\gamma_i^{-1}$ and $\gamma_o^{-1}$ are mapped under $B_{a,\lambda}$ onto $\gamma$. Since $\gamma$ is an analytic Jordan curve which contains no critical value, the curves $\gamma_{i}^{-1}$ and $\gamma_{o}^{-1}$ are also analytic Jordan curves. 
 
 \begin{figure}[hbt!]
\centering
\def\svgwidth{280pt}
\subimport{figures/}{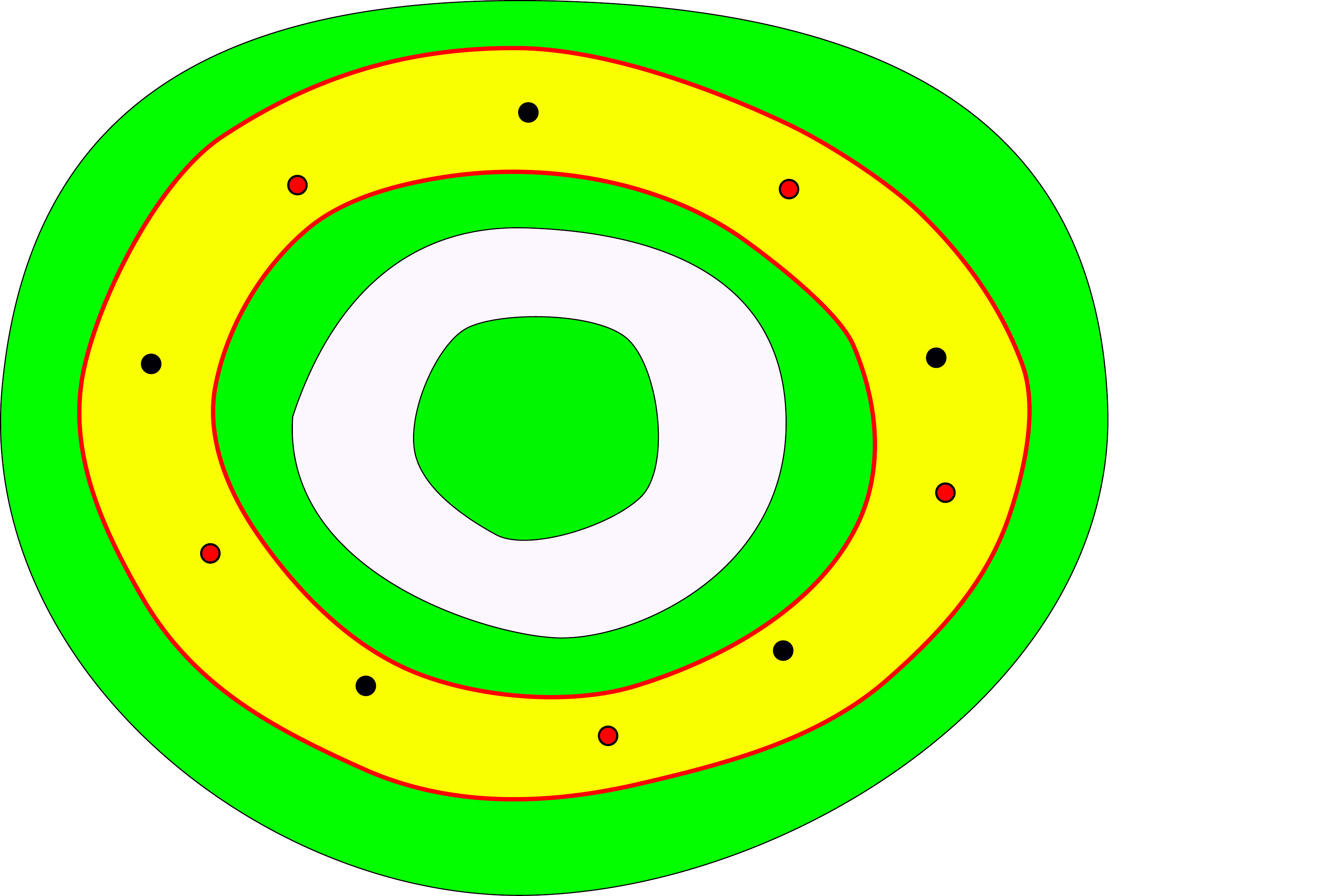_tex}
\caption{\small Scheme of the dynamics described in the proof of Proposition \ref{conjblas}. We draw as red dots $z=0$ and its preimages in $A_0$. We draw as black dots the critical points in $A_0$ and its images in $T_0$. }
\label{esquemacorbes}
\end{figure}

 Now we want to show that $\gamma_o^{-1}$ is mapped 3 to 1 onto $\gamma$. Since $A_0$ is mapped 5 to 1 onto $T_0$  (see Theorem~\ref{thmcritzeros}), it is enough to show that $\gamma_i^{-1}$ is mapped 2 to 1 onto $\gamma$. Consider the annulus $\mathcal{A}'$ bounded by $\partial T_0$ and $\gamma_i^{-1}$. It is mapped under $B_{a,\lambda}$ onto the annulus $\mathcal{A}''$ bounded by $\partial A^*(\infty)$ and $\gamma$. It is not difficult to see that $\mathcal{A}'$ is a connected component of $B_{a,\lambda}^{-1}(\mathcal{A}'')$ and, hence, $B_{a,\lambda}|_{\mathcal{A}'}$ is proper. In particular, $B_{a,\lambda}|_{\mathcal{A}'}$ has a degree $k$ which is also achieved on the boundaries. Since $T_0$ is mapped onto $A^*(\infty)$ 2 to 1 (see Theorem~\ref{thmcritzeros}), we conclude that this degree is 2 and $\gamma_{i}^{-1}$ is mapped 2 to 1 onto $\gamma$ .

 Let $\Gamma:=\gamma_0^{-1}$.  We have proven that $\Gamma$ is an analytic Jordan curve which is mapped 3 to 1 onto the analytic Jordan curve $\gamma$. Moreover, $\gamma\subset \rm{Int}(\Gamma)$. It is an standard procedure of quasiconformal surgery used in complex dynamics to `glue' $z^3$ in $\rm{Int}(\Gamma)$ under these circumstances (c.f.\ \cite[proof of Theorem 7.4]{BF}). We include the details of the surgery construction in sake of completeness.

Pick $0<\rho <1$. Let $\mathcal{R}:\rm{Int}(\gamma)\rightarrow\dis_{\rho^3}$ be a Riemann map fixing $z=0$. Then, $\mathcal{R}$ extends to the boundary as an analytic map. Indeed, it extends continuously to the boundary by Carath\'eodory's Theorem. Since $\gamma$ is an analytic curve, this continuation is analytic \linebreak (see \cite[Thm.\ 2.9]{BF}).  Let  $\psi_1:\gamma\rightarrow\cercle_{\rho^3}$  be the extension map, where $\cercle_{\rho^3}:=\partial\dis_{\rho^3}$. Since $\psi_1\circ B_{a,\lambda}:\Gamma\rightarrow \cercle_{\rho^3}$ is an analytic map of degree $3$, we can choose a $C^1$ (or real analytic) lift $\psi_2:\Gamma \rightarrow \cercle_{\rho}$ so that $\psi_1(B_{a,\lambda}(z))=\psi_2(z)^3$.

Let $A$ be the annulus bounded by $\gamma$ and $\Gamma$ and define $\mathbb{A}_{\rho, \rho^3}=\{z; \, \rho<|z|<\rho^3\}$. Then, there exists a quasiconformal map $\psi: A\rightarrow \mathbb{A}_{\rho, \rho^3}$ which extends continuously to $\psi: \overline{A}\rightarrow \overline{\mathbb{A}_{\rho, \rho^3}}$ and such that $\psi|_{\Gamma=}\psi_2$ and $\psi|_{\gamma}=\psi_1$ (see e.g.\ \cite{Ah} or \cite{BF}).  Now we define a model map $F$ as

$$F(z)=\left\{\begin{array}{lcl}
B_{a,\lambda}(z) &  \mbox{ for } & z\in \rm{Ext}(\Gamma)\\
\mathcal{R}^{-1}\left(\psi(z)^3\right)&  \mbox{ for } & z\in \rm{Ext}(\gamma)\setminus \rm{Ext}(\Gamma)\\
\mathcal{R}^{-1}\left(\mathcal{R}(z)^3\right)&  \mbox{ for } & z\in\overline{\rm{Int}(\gamma)}.
\end{array}\right. $$

The map $F(z)$ is quasiregular and has topological degree 4 by construction. Indeed, since $A_0$ is mapped with degree 5 onto $T_0$,  $\rm{Int}(\Gamma)$ contains $5$ preimages of every point of $\rm{Int}(\gamma)$. Moreover, since $B_{a,\lambda}|_{\rm{Int}(\gamma_i^{-1})}:\rm{Int}(\gamma_i^{-1})\rightarrow \rm{Ext}(\gamma)$ is proper of degree 2 (notice that \linebreak $T_0\subset \rm{Int}(\gamma_i^{-1})$ and $B_{a,\lambda}|_{T_0}$ has degree 2), $\rm{Int}(\Gamma)$ contains 2 preimages of every point in $\rm{Ext}(\gamma)$ under $B_{a,\lambda}$. Instead, $F$ maps $\rm{Int}(\Gamma)$ onto $\rm{Int}(\gamma)$ with degree 3. Hence, the surgery construction erases two preimages of every point in $\wcom$ and we can conclude that $F$ has topological degree $4$ since $B_{a,\lambda}$ has degree 6. We continue by defining an $F$-invariant Beltrami coefficient $\mu$. Observe that $F$ is a holomorphic map everywhere except on the annulus $A$ bounded by $\gamma$ and $\Gamma$. Notice also that the orbit of a point $z$ can go at most once through $A$. Let $A^n=\{z\;|\; B_{a,\lambda}^n(z)\in A\}$. Thus, if $\mu_0\equiv 0$ denotes the Beltrami coefficient of the standard complex structure, we can define

$$\mu(z)=\left\{\begin{array}{lcl}
\psi^*\mu_0(z) &  \mbox{ for } & z\in A\\
(B_{a,\lambda})^{*}\mu(z) &  \mbox{ for } & z\in A^n\setminus A^{n-1}\\
\mu_0(z)&  & elsewhere,
\end{array}\right. $$

\noindent
where $^*$ denotes the pullback operation. By construction, $F^{*}\mu=\mu$. Moreover, since $B_{a,\lambda}$ is a holomorphic function and hence preserves dilatation, we have that the dilatation of $\mu$ is given by $K_{\mu}=K_{\psi^*\mu_0 }$. Thus, $\mu$ has bounded dilatation. Let $\phi$ be an integrating map given by the Measurable Riemann Mapping Theorem (see \cite{AB}) fixing $z=0$ and $z=\infty$. Then, $\phi^*\mu_0=\mu$. Finally, define $f=\phi \circ F\circ \phi^{-1}$. By construction, $f^* \mu_0=\mu_0$.  Then, by Weyl's Lemma (see \cite[p. 16]{Ah}), $f$ is a holomorphic map. Moreover, since $F$ has topological degree $4$, we conclude that $f$ is a rational map of degree 4.

Summarizing, with the quasiconformal surgery procedure we have obtained a rational map $f$ such that the following hold.

\begin{itemize}
\item The maps $B_{a,\lambda}$ and $f$ are conjugate on $\rm{Ext}(\Gamma)$.
\item The map $f$ is conjugate to $z^3$ on $\rm{Int}(\phi(\gamma))$. In particular, it has $z=0$ as superattracting fixed point of local degree 3.
\item The only pole of $f$ is given by $\phi(z_{\infty})$, where $z_{\infty}=1/\overline{a}$ is the only pole of $B_{a,\lambda}$ in $\rm{Ext}(\Gamma)$. Moreover, $\phi(z_{\infty})$ belongs to the basin of attraction of $z=\infty$ under $f$.

\end{itemize}

Let $\mathcal{D}=\phi(\com\setminus \overline{A^*(\infty)})$. Then, since $f$ has no pole in $\mathcal{D}$ we conclude that $f$ maps $\mathcal{D}$ onto itself with degree 4 (it is a fully invariant set under $f$). 
Consider the  Riemann map $\Phi$ sending $\mathcal{D}$ onto $\dis$ and $0$ onto $0$. Then $f|_{\mathcal{D}}$ is conjugate to the map $\tilde{f}:\dis\rightarrow\dis$ defined as $\tilde{f}=\Phi\circ f\circ \Phi^{-1}$. Since $f|_{D}$ has degree 4 and $z=0$ is a superattracting fixed point of local degree 3, it follows that $\tilde{f}$ is necessarily a Blaschke product of the form $B_{b,t}(z)=e^{2\pi i t} z^3(z-b)/(z-\overline{b})$, where $b\in\dis\setminus\{0\}$ and $|e^{2\pi i t}|=1$ (see \cite[Lemma 15.5]{Mi1}). Finally, the map $\Phi\circ\phi$ conjugates $B_{a,\lambda}$ with the Blaschke product $B_{b,t}$ on the annulus bounded by $\Gamma$ and $\partial A^*(\infty)$.

\endproof

Given a Jordan curve $\gamma$ which surrounds $z=0$, we want to analyse the structure of $B_{a,\lambda}^{-1}(\gamma)$. This is done in Proposition~\ref{preimcorbes}, which uses the previous result. Before stating this proposition we introduce notation which is used in the remaining parts of this section.

\begin{defin}\label{defininout}
We define $\mathcal{A}_{0,\infty}(a,\lambda)$ to be the annulus bounded by $\partial T_0(a,\lambda)$ and $\partial A_{a,\lambda}^*(\infty)$, $\mathcal{A}_{in}(a,\lambda)$ to be the annulus bounded by $\partial T_0$ and $\overline{A_0(a,\lambda)}$, and $\mathcal{A}_{out}(a,\lambda)$ to be the annulus bounded by $\overline{A_0(a,\lambda)}$ and $\partial A_{a,\lambda}^*(\infty)$.

\end{defin}

As before, we should drop the dependence on $a$ and $\lambda$ of the annulus introduced in Definition~\ref{defininout} whenever it is clear from the context. In Figure~\ref{esquemacritzeros}, $\mathcal{A}_{in}$ corresponds to the interior white annulus, while $\mathcal{A}_{out}$ corresponds to the union of the exterior white region together with the yellow disk $D_0$.

\begin{propo}\label{preimcorbes}
Let $\gamma$ be a Jordan curve which surrounds $z=0$ and is contained in the annulus $\mathcal{A}_{0,\infty}$. Then $B_{a,\lambda}^{-1}(\gamma)$ has a connected component contained in $\mathcal{A}_{in}$ which is a Jordan curve that surrounds $z=0$ and  is mapped 2-1 onto $\gamma$. The other connected components of $B_{a,\lambda}^{-1}(\gamma)$ belong to $\mathcal{A}_{out}$ and exactly one of the following holds.

\begin{itemize}
\item If $B_{a,\lambda}(c_-)\in\rm{Int}(\gamma)$,  then $B_{a,\lambda}^{-1}(\gamma)$ has a single connected component in $\mathcal{A}_{out}$, which is a Jordan curve that surrounds $z=0$ and is mapped 4-1 onto $\gamma$

\item If $B_{a,\lambda}(c_-)\in\gamma$,  then $B_{a,\lambda}^{-1}(\gamma)$ has a single connected component in $\mathcal{A}_{out}$, which consists of the union of 2 Jordan curves which intersect at the critical point $c_-$. One of them surrounds $z=0$ and is mapped 3-1 onto $\gamma$. The other one surrounds the zero $z_0$ but not $z=0$ and is mapped 1-1 onto $\gamma$.

\item If $B_{a,\lambda}(c_-)\in\rm{Ext}(\gamma)$,  then $B_{a,\lambda}^{-1}(\gamma)$ has two disjoint connected components in $\mathcal{A}_{out}$, which are Jordan curves. One of them surrounds $z=0$ and is mapped 3-1 onto $\gamma$. The other one surrounds the zero $z_0$ but not $z=0$ and is mapped 1-1 onto $\gamma$.
\end{itemize}

\end{propo}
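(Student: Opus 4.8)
The plan is to localise $B_{a,\lambda}^{-1}(\gamma)$ inside two explicit sub-annuli of $\widehat{\com}$ and apply the Riemann--Hurwitz formula to the restriction of $B_{a,\lambda}$ to each; the trichotomy in the statement will come from the position of the unique critical value that can occur in the outer piece. First I record that, since $\gamma$ surrounds $z=0$ and is disjoint from the connected sets $\overline{T_0}$ and $\overline{A^*_{a,\lambda}(\infty)}$, one must have $\overline{T_0}\subset\mathrm{Int}(\gamma)$ and $\partial A^*_{a,\lambda}(\infty)\subset\mathrm{Ext}(\gamma)$, so that $\gamma$ is a core curve of $\mathcal{A}_{0,\infty}$ (it separates the two boundary components). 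Theorem~\ref{thmcritzeros} gives $B_{a,\lambda}^{-1}(T_0)=A_0\sqcup D_0$ and $B_{a,\lambda}^{-1}(A^*_{a,\lambda}(\infty))=A^*_{a,\lambda}(\infty)\sqcup T_0$ (the degrees $5+1$ and $4+2$ summing to $6=\deg B_{a,\lambda}$), whence
$$B_{a,\lambda}^{-1}\left(\mathcal{A}_{0,\infty}\right)=\widehat{\com}\setminus\left(\overline{T_0}\cup\overline{A_0}\cup\overline{D_0}\cup\overline{A^*_{a,\lambda}(\infty)}\right)=\mathcal{A}_{in}\ \sqcup\ \left(\mathcal{A}_{out}\setminus\overline{D_0}\right).$$
Hence $B_{a,\lambda}^{-1}(\gamma)\subset\mathcal{A}_{in}\sqcup(\mathcal{A}_{out}\setminus\overline{D_0})$, and $B_{a,\lambda}$ restricts to proper maps on these two sets, of degrees $2$ (equal to the degree with which $\partial T_0$ covers $\partial A^*_{a,\lambda}(\infty)$, that is $\deg B_{a,\lambda}|_{T_0}=2$) and $6-2=4$ respectively.

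For the inner piece, Riemann--Hurwitz forces the degree-$2$ proper map $B_{a,\lambda}|_{\mathcal{A}_{in}}$ between doubly connected domains to be unbranched, hence a covering; $B_{a,\lambda}^{-1}(\gamma)\cap\mathcal{A}_{in}$ is then a disjoint union of core curves of $\mathcal{A}_{in}$ with covering degrees summing to $2$, and the outermost one bounds, together with $\partial T_0$, a sub-annulus on which $B_{a,\lambda}$ is a covering of degree $2$; thus that curve already carries the full degree, so it is the only component, maps $2$-$1$ onto $\gamma$, and being a core curve of $\mathcal{A}_{in}$ it surrounds $z=0$. This gives the first assertion.

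For the outer piece $W:=\mathcal{A}_{out}\setminus\overline{D_0}$, of connectivity $3$ with boundary curves the outer component $C$ of $\partial A_0$, the curve $\partial D_0$, and $\partial A^*_{a,\lambda}(\infty)$ (covering $\partial T_0$, $\partial T_0$ and $\partial A^*_{a,\lambda}(\infty)$ with degrees $3,1,4$), Riemann--Hurwitz gives $3-2=4(2-2)+r$, so $B_{a,\lambda}|_W$ has exactly one, simple, critical point, which by Theorem~\ref{thmcritzeros} is $c_-$, with critical value $B_{a,\lambda}(c_-)\in\mathcal{A}_{0,\infty}$. Writing $\mathcal{A}_{0,\infty}\setminus\gamma=A^-\sqcup A^+$ with $A^-$ adjacent to $\partial T_0$, the three cases are $B_{a,\lambda}(c_-)\in A^-,\ \in\gamma,\ \in A^+$. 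When $B_{a,\lambda}(c_-)\notin\gamma$, the one of $A^{\pm}$ missing the critical value has critical-value-free preimage in $W$, hence a disjoint union of annuli each carrying exactly one boundary curve of $W$: if $B_{a,\lambda}(c_-)\in A^+$ this produces over $A^-$ one annulus with boundary $C$ and one with boundary $\partial D_0$, so $B_{a,\lambda}^{-1}(\gamma)\cap W$ is two Jordan curves mapped $3$-$1$ and $1$-$1$; if $B_{a,\lambda}(c_-)\in A^-$ it produces over $A^+$ a single annulus with boundary $\partial A^*_{a,\lambda}(\infty)$, so $B_{a,\lambda}^{-1}(\gamma)\cap W$ is a single Jordan curve mapped $4$-$1$. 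In both cases the degree $2+4=6$ is exhausted, so there are no further components. Which marked points each curve encircles is then read off from the pair of pants obtained as the preimage of the complementary annulus $A^{\mp}$: its three complementary disks contain $z=0$, $z_0$ and $\infty$, one each, which forces the $3$-$1$ curve to surround $z=0$ but not $z_0$, the $1$-$1$ curve to surround $z_0$ but not $z=0$, and the $4$-$1$ curve to surround $z=0$. Finally, for $B_{a,\lambda}(c_-)\in\gamma$ the set $B_{a,\lambda}^{-1}(\gamma)\cap W$ is a $1$-manifold away from $c_-$ with a single $4$-valent vertex at $c_-$, i.e.\ a wedge of two Jordan curves; comparing with core curves slightly inside and slightly outside $\gamma$ (the two previous cases) identifies the two loops as the coalesced degree-$3$ curve around $z=0$ and degree-$1$ curve around $z_0$. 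Adding the degree-$2$ curve in $\mathcal{A}_{in}$ yields the stated description in all three cases.

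I expect the main obstacles to be the topological bookkeeping of which of $0,z_0$ each preimage curve encircles (handled above via the complementary disks of a pair of pants) and the degenerate case $B_{a,\lambda}(c_-)\in\gamma$, where $B_{a,\lambda}^{-1}(\gamma)$ is not a manifold. For the latter one can instead use the Blaschke model of Proposition~\ref{conjblas}: transporting $\gamma$ and $W$ by the conjugating homeomorphism reduces the problem on $\mathcal{A}_{out}$ to describing the preimage of a Jordan curve around $0$ under the degree-$4$ Blaschke product $B_{b,t}$, whose three possible shapes (two curves, a figure-eight at the non-central critical point, a single curve) according to the position of the image of that critical point are classical; this is presumably how the dependence on Proposition~\ref{conjblas} enters.
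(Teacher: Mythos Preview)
Your argument is correct, and for the inner annulus $\mathcal{A}_{in}$ it matches the paper's proof essentially verbatim (proper degree-$2$ map between annuli, hence unbranched, hence a single core curve covering $\gamma$ with degree $2$).

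For the outer piece your route genuinely differs from the paper's. The paper does \emph{not} analyse $B_{a,\lambda}$ on $\mathcal{A}_{out}$ directly; instead it invokes Proposition~\ref{conjblas} to transport the whole question to the unit disk, where $B_{a,\lambda}|_{\mathcal{A}_{out}}$ becomes a degree-$4$ Blaschke product $B_{b,t}$ with a single free critical point, and then reads off the three cases by elementary arguments about zeros and the unique critical value of $B_{b,t}$ in $\dis$. You instead remove $\overline{D_0}$ to obtain the pair of pants $W$, compute the boundary degrees $(3,1,4)$ on $(C,\partial D_0,\partial A^*(\infty))$, apply Riemann--Hurwitz to locate the single critical point $c_-$ in $W$, and split into cases by whether the critical value lies in $A^-$, on $\gamma$, or in $A^+$. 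What you gain is independence from the quasiconformal surgery of Proposition~\ref{conjblas}: your proof is self-contained from Theorem~\ref{thmcritzeros} and Riemann--Hurwitz alone. What the paper gains is a cleaner model in which the ``which point is encircled'' bookkeeping becomes trivial (the only zeros of $B_{b,t}$ in $\dis$ are $0$ and $w_{b,t}$, and the argument principle is immediate), and the degenerate case $B_{b,t}(c_{b,t})\in\eta$ is handled directly rather than by a limiting argument. Your own final paragraph already identifies this trade-off correctly: the perturbation argument you sketch for $B_{a,\lambda}(c_-)\in\gamma$ would need a continuity statement to be airtight, and falling back on Proposition~\ref{conjblas} there is exactly what the paper does throughout.
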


\proof
Given that $B_{a,\lambda}(A_0)=T_0$, we have $B_{a,\lambda}^{-1}(\mathcal{A}_{0,\infty})\subset \mathcal{A}_{in}\cup \mathcal{A}_{out}$. We first analyse the set $B_{a,\lambda}^{-1}(\gamma)$ in $\mathcal{A}_{in}$. Since there is no zero and no pole in $\mathcal{A}_{in}$, we have that $\mathcal{A}_{in}$ is mapped onto $\mathcal{A}_{0,\infty}$ under $B_{a,\lambda}$. Moreover, $B_{a,\lambda}|_{\mathcal{A}_{in}}$ is proper of degree 2 since $B_{a,\lambda}|_{\partial T_0}$ has degree 2. A connected component $\gamma'$ of $B_{a,\lambda}^{-1}(\gamma)$ in $\mathcal{A}_{in}$ surrounds $z=0$. Otherwise, since there is no zero and no pole in $\mathcal{A}_{in}$, it could not be mapped to a curve surrounding $z=0$. As before, $B_{a,\lambda}$ is proper of degree 2 on the annulus bounded by $\partial T_0$ and $\gamma'$. In particular, $\gamma'$ is mapped 2 to 1 onto $\gamma$  and there is no other connected component of $B_{a,\lambda}^{-1}(\gamma)$  in $\mathcal{A}_{in}$.

The set  $\mathcal{A}_{out}$ belongs to the annular region where the dynamics of $B_{a,\lambda}$ is conjugate to the dynamics of a Blaschke product of the form $B_{b,t}(z)=e^{2\pi i t} z^3(z-b)/(z-\overline{b})$ as in Proposition~\ref{conjblas}. Hence, to finish the proof of the result it is enough to prove that the last three statements of the proposition  hold for $B_{b,t}$ in $\dis\setminus\{0\}$. 

The Blaschke products $B_{b,t}$ map the unit disk $\dis$ into itself. They have $z=0$ as superattracting fixed point of local degree 3, which has $\dis$ as its basin of attraction. Moreover, there is a unique critical point $c_{b,t}\in\dis$ and a unique zero $w_{b,t}\in\dis$. Notice that the map $B_{b,t}|_{\dis}$ has degree 4.
Let $\eta\subset\dis$ be a Jordan curve surrounding $z=0$. Since there is only a critical point, any connected component $\eta'$ of $B_{b,t}^{-1}(\eta)$ is either a Jordan curve or the union of two Jordan curves which intersect at the critical point. Since $B_{b,t}|_{\dis}$ has no poles, any bounded connected component of $\com\setminus\eta'$ is mapped into $\rm{Int}(\eta)$. Moreover, exactly one of the three following cases holds.

 If $B_{b,t}(c_{b,t})\in\rm{Ext}(\eta)$ then there is a connected  component $\eta'$ of $B_{b,t}^{-1}(\eta)$ which is a Jordan curve that surrounds $z=0$ but not the critical point. Then $\rm{Int}(\eta')$ is mapped onto $\rm{Int}(\eta)$ with degree 3. Consequently, $\eta'$ is mapped onto $\eta$ with degree 3. There is an extra preimage $\eta''$ which is a Jordan curve that does not surround $z=0$, is mapped 1 to 1 onto $\eta$ and such that $w_{b,t}\in \rm{Int}(\eta'')$.

If $B_{b,t}(c_{b,t})\in\eta$ then $B_{b,t}^{-1}(\eta)$ is the union of two Jordan curves $\eta'$ and $\eta''$. As in the previous case we have that $0\in\rm{Int}(\eta')$, that $w_{b,t}\in\rm{Int}(\eta'')$, and that $\eta'$ and $\eta''$ are mapped into $\eta$ with degree 3 and 1, respectively.

 If $B_{b,t}(c_{b,t})\in \rm{Int}(\eta)$ then there is a component of $B_{b,t}^{-1}(\eta)$ which surrounds both $z=0$ and the critical point $c_{b,t}$. If not, there would be two connected components of $B_{b,t}^{-1}(\eta)$, one surrounding $z=0$ and the other surrounding the critical point. The interior of these connected components would be mapped into $\rm{Int}(\eta)$ with degree 3 and 2, respectively, which is not possible since $B_{b,t}|_{\dis}$ has degree 4. Let $\eta'$ be the connected component of $B_{b,t}^{-1}(\eta)$ surrounding $z=0$ and the critical point. Then $B_{b,t}|_{\rm{Int}(\eta')}$ is a proper map from a simply connected domain onto a simply connected domain with 3 critical points counted with multiplicity (recall that $z=0$ is a double critical point). Then, it follows from the Riemann-Hurwitz formula (see Theorem~\ref{riemannhurwitz}) that $B_{b,t}|_{\rm{Int}(\eta')}$ has degree 4. Hence, $\eta'$ is mapped onto $\eta$ with degree 4 and there is no other connected component of $B_{b,t}^{-1}(\eta)$.

\endproof

The following result is a direct corollary of Proposition~\ref{conjblas}. It describes how preimages of $A_0(a,\lambda)$ accumulate on $\partial A_{a,\lambda}^*(\infty)$. It also provides a first ordering in a subset of the Fatou components which are eventually mapped onto  $A_0(a,\lambda)$.

\begin{co}\label{anellsacumulen}
Let $a\in \dis^*$ and let $\lambda\in\dis_{\mathcal{C}(a)}^*$. Then, there is a sequence  $\{A_{n}(a,\lambda)\}_{n\in\nat}$ of connected components of $B_{a,\lambda}^{-n}(A_0(a,\lambda))$ such that the following hold.

\begin{itemize}
\item Every $A_n(a,\lambda)$  surrounds $z=0$. 
\item For every $n\in\nat$, $B_{a,\lambda}(A_n(a,\lambda))=A_{n-1}(a,\lambda)$ and $A_{n-1}$ is a subset of the bounded connected component of $\wcom\setminus A_n$ which contains $z=0$. 
\item The sets $A_n(a,\lambda)$ accumulate on $\partial A_{a,\lambda}^*(\infty)$.
\end{itemize}
  
\end{co}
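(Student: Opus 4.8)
The plan is to transfer the statement, via Proposition~\ref{conjblas}, to the Blaschke product $B_{b,t}$. Let $h$ be the conjugacy constructed in its proof: a homeomorphism from $\wcom\setminus\overline{A_{a,\lambda}^*(\infty)}$ onto $\dis$ fixing $z=0$, with $h\circ B_{a,\lambda}=B_{b,t}\circ h$ on the annulus bounded by $\Gamma$ and $\partial A_{a,\lambda}^*(\infty)$, and which — $\partial A_{a,\lambda}^*(\infty)$ being a quasicircle (Theorem~\ref{thmcritzeros}) — extends to a homeomorphism between the closures taking $\partial A_{a,\lambda}^*(\infty)$ onto $\cercle$. Set $g:=B_{b,t}$; it is a finite Blaschke product of degree $4$ with $z=0$ a superattracting fixed point of local degree $3$. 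We shall use repeatedly that the only pole of $B_{a,\lambda}$ in $\wcom\setminus\overline{A_{a,\lambda}^*(\infty)}$ is $z=0$, that it lies in $T_0\subset\rm{Int}(\Gamma)$, and hence that $B_{a,\lambda}(A_0)=T_0\subset\rm{Int}(\Gamma)$.

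First I would construct the components $A_n$. Fix a Jordan curve $\eta_0\subset A_0$ surrounding $z=0$ with $\eta_0\subset\rm{Ext}(\Gamma)$ (so $\eta_0$ lies in the conjugacy domain and $\Gamma\subset\rm{Int}(\eta_0)$). Since $\eta_0\subset\mathcal{A}_{0,\infty}$, Proposition~\ref{preimcorbes} yields a connected component of $B_{a,\lambda}^{-1}(\eta_0)$ contained in $\mathcal{A}_{out}$ which is a Jordan curve $\eta_1$ surrounding $z=0$; as each such curve again lies in $\mathcal{A}_{out}\subset\mathcal{A}_{0,\infty}$, iterating Proposition~\ref{preimcorbes} produces Jordan curves $\eta_n\subset\mathcal{A}_{out}$, all surrounding $z=0$, with $B_{a,\lambda}(\eta_n)=\eta_{n-1}$. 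Let $A_n$ be the Fatou component containing $\eta_n$. Then $\eta_n\subset B_{a,\lambda}^{-n}(\eta_0)\subset B_{a,\lambda}^{-n}(A_0)$, so $A_n$ is a connected component of $B_{a,\lambda}^{-n}(A_0)$ with $B_{a,\lambda}(A_n)=A_{n-1}$, and it surrounds $z=0$.

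The nesting and the accumulation I would obtain by pushing everything through $h$ to $g$. Since $\eta_0$ lies in the conjugacy domain and $B_{a,\lambda}(\eta_0)\subset T_0\subset\rm{Int}(\Gamma)\subset\rm{Int}(\eta_0)$, the curve $h(\eta_0)$ satisfies $g(h(\eta_0))\subset\rm{Int}(h(\eta_0))$; by the argument principle the Jordan domain $V_0:=\rm{Int}(h(\eta_0))$, which contains $z=0$, satisfies $g(V_0)\subseteq V_0$, so $V_0$ lies in the immediate basin $\Omega_g$ of the superattracting fixed point $z=0$ of $g$. Let $V_n$ be the connected component of $g^{-n}(V_0)$ containing $z=0$. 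An induction using that $z=0$ and the free critical point $h(c_-)$ are the only critical points of $g$ in $\dis$ shows that $g(V_n)=V_{n-1}$, that $V_0\subseteq V_1\subseteq\cdots$ with $\bigcup_n V_n=\Omega_g$, that each $V_n$ is a Jordan domain, and that $\partial V_n$ is the unique component of $g^{-1}(\partial V_{n-1})$ surrounding $z=0$ — which, since $g(h(\eta_n))=h(\eta_{n-1})$, equals $h(\eta_n)$. From $V_{n-1}\subseteq V_n$ we get $h(\eta_{n-1})\subset\rm{Int}(h(\eta_n))$, hence $\eta_{n-1}\subset\rm{Int}(\eta_n)$; as $A_{n-1}$ is connected, disjoint from $\eta_n$ (which lies in the distinct Fatou component $A_n$), and meets $\rm{Int}(\eta_n)$, we obtain $A_{n-1}\subset\rm{Int}(\eta_n)$, whence $A_{n-1}$ is contained in the bounded component of $\wcom\setminus A_n$ containing $z=0$. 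This settles the first two bullet points. Finally, $\Omega_g=\dis$: since $g$ is a finite Blaschke product of degree $\ge 2$ we have $\cercle\subset\mathcal{J}(g)$, while the $g$-orbit of $h(c_-)$ is the $h$-image of the $B_{a,\lambda}$-orbit of $c_-$ until this orbit enters $\rm{Int}(\Gamma)$, a region on which $g$ is conjugate to $z\mapsto z^3$; hence $g^n(h(c_-))\to 0$, the free critical orbit of $g$ is attracted to $z=0$, and $\mathcal{J}(g)=\cercle$. Consequently $h(\eta_n)=\partial V_n$ converges to $\partial\dis=\cercle$, and the boundary homeomorphism $h^{-1}$ shows that the curves $\eta_n$, hence the Fatou components $A_n$ containing them, accumulate on $\partial A_{a,\lambda}^*(\infty)$.

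The step I expect to be the main obstacle is the last one, $\Omega_g=\dis$ (equivalently $\mathcal{J}(B_{b,t})=\cercle$): one must rule out that the nested preimages accumulate on a Jordan curve lying in the interior of $\mathcal{A}_{out}$. This is precisely where the fine description of the dynamics of $B_{a,\lambda}$ near $z=0$ provided by Theorem~\ref{thmcritzeros} — namely that the perturbation generates essentially only dynamics escaping to $\infty$ — is needed, in order to force the orbit of $c_-$ into the region where $g$ behaves like $z\mapsto z^3$.
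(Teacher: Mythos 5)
Your construction of the curves $\eta_n$ via Proposition~\ref{preimcorbes}, the definition of the components $A_n$, and the nesting argument through the increasing Jordan domains $V_n$ follow the same route as the paper's proof (transfer to the Blaschke product $B_{b,t}$ through Proposition~\ref{conjblas}), and are in fact carried out in more detail than in the paper itself. The only step where your justification goes wrong is precisely the one you single out as the main obstacle. The blanket claim that $\cercle\subset\mathcal{J}(g)$ for every finite Blaschke product of degree at least $2$ is false: a Blaschke product whose Denjoy--Wolff point lies on $\cercle$ has Julia set equal to a Cantor subset of the circle. It is true here only because $g$ fixes $0\in\dis$, so one is in the elliptic case. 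Moreover, your route to the conclusion through the free critical orbit is both unjustified and unnecessary: you assert without proof that the $B_{a,\lambda}$-orbit of $c_-$ enters $\mathrm{Int}(\Gamma)$, which is not known a priori (in particular not when $c_-\notin A_{a,\lambda}(\infty)$), and the conjugacy gives you no control once the orbit leaves $\mathrm{Ext}(\Gamma)$.

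None of this machinery is needed. For the accumulation you only require $\bigcup_n V_n\supseteq \dis$, i.e.\ that the immediate basin $\Omega_g$ of $z=0$ contains $\dis$. Since $g$ is a Blaschke product, $\dis$, $\cercle$ and the exterior are each invariant; by Montel's theorem the iterates of $g$ are normal on $\dis$, so $\dis$ lies in a single Fatou component, which contains the superattracting point $0$, giving $\dis\subseteq\Omega_g$; conversely $\Omega_g\subseteq\dis$ because orbits on $\cercle$ remain on $\cercle$ and cannot converge to $0$. (Equivalently, by the Schwarz lemma/Denjoy--Wolff theorem, $g^n\to 0$ locally uniformly on $\dis$, irrespective of the free critical orbit.) This immediate fact is exactly what the paper invokes when it states that the iterated preimages of a curve surrounding $0$ accumulate on $\cercle$, the boundary of the basin of attraction of $z=0$ for $B_{b,t}$. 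With that one-line replacement your argument is complete and coincides with the paper's proof.
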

\proof

 Let $B_{b,t}(z)=e^{2\pi i t} z^3(z-b)/(z-\overline{b})$ where $b\in\dis^*$ and $|e^{2\pi i t}|=1$ as in Proposition~\ref{conjblas}. Then, the preimage of any Jordan curve $\gamma\subset \dis\setminus \{0\}$ surrounding 0 under $B_{b,t}$  has one connected component, say $\gamma_{-1}$, which also surrounds $z=0$. If $\gamma\cap\gamma_{-1}=\emptyset$ then $\gamma\subset \rm{Int}(\gamma_{-1})$. Moreover, the iterated preimages of this curve  accumulate on $\cercle$, which is the boundary of the basin of attraction of $z=0$. The claims hold by noticing that the exterior boundary of $A_0(a,\lambda)$ belongs to the annulus bounded by $\Gamma$ and $\partial A^*(\infty)$ where the dynamics of $B_{a,\lambda}$ is conjugate to  a Blaschke product $B_{b,t}(z)$ (see Proposition~\ref{conjblas}). 
\endproof

In Corollary~\ref{anellsacumulen} we have introduced  a sequence of preimages $A_n$ of $A_0$ with prescribed dynamics. However, it is convenient to label all the preimages of $A_0$ which surround $z=0$. We do this in the next lemma.

\begin{lemma}\label{deforderwell}
Given any finite sequence $\Delta=\{{i_0,\cdots,i_{p_{\Delta}-1}}\}$, where $p_{\Delta}$ is a non-zero natural number and $i_n\in\{0,1\}$ for all integer $0\leq n <p_{\Delta}$, there exists a unique Fatou component $A_{\Delta}(a,\lambda)$ which surrounds $z=0$, is mapped onto $A_0(a,\lambda)$ in $p_{\Delta}$ iterations of $B_{a,\lambda}$ and satisfies the following.

\begin{itemize}
\item If $i_n=0$, then $B_{a,\lambda}^{n}(A_{\Delta}(a,\lambda))\subset \mathcal{A}_{out}$. In particular, if $i_0=0$ then  $A_{\Delta}(a,\lambda) \subset \mathcal{A}_{out}$.

\item If $i_n=1$, then $B_{a,\lambda}^{n}(A_{\Delta}(a,\lambda))\subset \mathcal{A}_{in}$. In particular, if $i_0=1$ then  $A_{\Delta}(a,\lambda) \subset \mathcal{A}_{in}$.
\end{itemize}
\end{lemma}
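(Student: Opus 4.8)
The plan is to proceed by induction on the length $p_\Delta$ of the sequence $\Delta$, peeling off the \emph{last} symbol $i_{p_\Delta-1}$ and using Proposition~\ref{preimcorbes} together with Corollary~\ref{anellsacumulen} as the inductive engine. For the base case $p_\Delta=1$: if $\Delta=\{1\}$ we take $A_\Delta$ to be the unique component of $B_{a,\lambda}^{-1}(A_0)$ lying in $\mathcal{A}_{in}$ and surrounding $z=0$, whose existence and uniqueness follow from Proposition~\ref{preimcorbes} applied to each of the two boundary curves of $A_0$ (the inner preimage curves in $\mathcal{A}_{in}$ bound an annular component surrounding $0$); if $\Delta=\{0\}$ we take the unique component of $B_{a,\lambda}^{-1}(A_0)$ in $\mathcal{A}_{out}$ surrounding $z=0$, which is exactly the set $A_1(a,\lambda)$ from Corollary~\ref{anellsacumulen}. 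Here one must be a little careful: $A_0$ is an annulus, not a Jordan curve, so one applies Proposition~\ref{preimcorbes} to its two boundary Jordan curves $\eta_{\rm in},\eta_{\rm out}\subset\mathcal{A}_{0,\infty}$ and checks that the annulus they co-bound pulls back, under the relevant local degree, to an annulus of the claimed type; the fact that $A_0$ contains no critical point other than the five "small" ones (all of whose images lie in $T_0$, hence inside any such $\gamma$) keeps the pullback of the annular region between the two curves connected and annular, so no extra bookkeeping with $c_-$ is needed for the components surrounding $0$.

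For the inductive step, suppose the statement holds for all sequences of length $p-1$. Given $\Delta=\{i_0,\dots,i_{p-1}\}$ of length $p$, let $\Delta'=\{i_1,\dots,i_{p-1}\}$ be the length-$(p-1)$ tail; by induction there is a unique annular Fatou component $A_{\Delta'}$ surrounding $z=0$, mapped onto $A_0$ in $p-1$ steps and satisfying the prescribed $\mathcal{A}_{in}/\mathcal{A}_{out}$ itinerary $i_1,\dots,i_{p-1}$. Now I look at $B_{a,\lambda}^{-1}(A_{\Delta'})$ and must select the component that (a) surrounds $z=0$ and (b) lies in $\mathcal{A}_{in}$ if $i_0=1$, in $\mathcal{A}_{out}$ if $i_0=0$. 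Since $A_{\Delta'}\subset\mathcal{A}_{0,\infty}$ surrounds $z=0$, Proposition~\ref{preimcorbes} (applied to the two boundary curves of the annulus $A_{\Delta'}$, both of which lie in $\mathcal{A}_{0,\infty}$ and surround $z=0$) tells us precisely what $B_{a,\lambda}^{-1}$ of that annular region looks like: exactly one component in $\mathcal{A}_{in}$ surrounding $z=0$ (mapped $2$-to-$1$), and in $\mathcal{A}_{out}$ exactly one component surrounding $z=0$ — whether or not it is pinched at $c_-$, in all three cases of the proposition there is a \emph{unique} preimage curve/annulus surrounding $z=0$, of local degree $3$ or $4$. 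Thus the desired component $A_\Delta$ exists and is unique; it is annular because the Riemann--Hurwitz formula (Theorem~\ref{riemannhurwitz}) applied to this proper restriction of $B_{a,\lambda}$ onto an annulus, with at most one critical point $c_-$ possibly on the $\mathcal{A}_{out}$-component surrounding $0$, forces connectivity $2$ (note: if $c_-$ lies on that component, Proposition~\ref{preimcorbes} shows the other branch through $c_-$ surrounds only $z_0$, not $z=0$, so it is discarded by condition (a), and the $0$-surrounding branch remains a Jordan curve pair bounding an annulus). Finally $B_{a,\lambda}^p(A_\Delta)=B_{a,\lambda}^{p-1}(A_{\Delta'})=A_0$, completing the induction.

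The main obstacle I anticipate is the careful verification that, in each of the three cases of Proposition~\ref{preimcorbes}, there is genuinely a \emph{unique} connected component of $B_{a,\lambda}^{-1}(\gamma)$ that surrounds $z=0$, and that when we pass from the Jordan-curve statement of Proposition~\ref{preimcorbes} to the annular set $A_{\Delta'}$ the preimage is still a single annulus surrounding $z=0$ rather than, say, two nested annuli or an annulus pinched in a way that destroys the "surrounds $z=0$" property. This is handled by applying Proposition~\ref{preimcorbes} to the inner and outer boundary curves of $A_{\Delta'}$ separately and then checking that the region between their $0$-surrounding preimages contains no critical value other than possibly $B_{a,\lambda}(c_-)$, and that $B_{a,\lambda}(c_-)$ either lies inside both or outside both (since $c_-$ and $z_0$ sit in the region $\mathcal{A}_{out}$ between $A_0$ and $A^*(\infty)$, by Theorem~\ref{thmcritzeros}(d)); then Riemann--Hurwitz pins down the connectivity. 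A secondary, purely notational, point is to make sure the indexing convention "$B_{a,\lambda}^n(A_\Delta)\subset\mathcal{A}_{in/out}$ according to $i_n$" is consistent with peeling from the last symbol in the induction — i.e.\ that $B_{a,\lambda}(A_\Delta)=A_{\Delta'}$ with the shifted sequence — which is just a matter of lining up the definitions.
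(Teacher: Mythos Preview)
Your approach is essentially the paper's: the paper simply observes, citing Lemma~\ref{fatouinfinit} and Proposition~\ref{preimcorbes}, that every Fatou component surrounding $z=0$ has exactly one preimage surrounding $z=0$ in $\mathcal{A}_{in}$ and exactly one in $\mathcal{A}_{out}$, and then iterates---your induction just unpacks this, with extra (and legitimate) care about passing from the Jordan-curve statement of Proposition~\ref{preimcorbes} to annular Fatou components. Two small points worth cleaning up: you write ``peeling off the \emph{last} symbol $i_{p_\Delta-1}$'' but your inductive step (correctly) peels off the \emph{first} symbol $i_0$, so that $B_{a,\lambda}(A_\Delta)=A_{\Delta'}$; and for the uniqueness direction you implicitly need that $B_{a,\lambda}(U)$ surrounds $z=0$ whenever $U$ does, which is supplied by Lemma~\ref{fatouinfinit} and which the paper invokes explicitly.
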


\proof
 The result follows directly from Lemma~\ref{fatouinfinit} and Proposition~\ref{preimcorbes}. From Lemma~\ref{fatouinfinit} we know that if $U$ is a Fatou component that surrounds $z=0$ then $B_{a,\lambda}(U)$ also surrounds $z=0$. Therefore, it is enough to analyse the preimages of any Fatou component surrounding $z=0$. From Proposition~\ref{preimcorbes} we know that any Fatou component $U$ which surrounds $z=0$ has exactly two preimages which surround $z=0$, one in $\mathcal{A}_{in}$ and the other in $\mathcal{A}_{out}$.
\endproof

\begin{rem}\label{deforderconflict}
Along the paper we keep the notation $A_n(a,\lambda)$ for the Fatou components introduced in Corollary~\ref{anellsacumulen}. The Fatou component $A_n(a,\lambda)$ corresponds to $A_{\Delta}(a,\lambda)$ where $\Delta$ consists of a sequence of $n$ symbols 0. Since we keep the notation $A_n$, we avoid the confusion between $A_1(a,\lambda)=A_{\{0\}}(a,\lambda)\subset\mathcal{A}_{out}$ and $A_{\{1\}}(a,\lambda)\subset\mathcal{A}_{in}$, or the original annulus $A_0(a,\lambda)$.

\end{rem}

Notice that if $c_-(a,\lambda)\in U_c(a,\lambda)$, where $U_c(a,\lambda)$ is a preimage of $A_0(a,\lambda)$, then $U_c(a,\lambda)$ has connectivity $3$ (see Lemma~\ref{fatouinfinit}). If $U_c(a,\lambda)$ surrounds $z=0$, the subsequent preimages of $U_c(a,\lambda)$ which also surround $z=0$  have greater connectivity (see \cite{Can1}). Because of this, when we talk about the sets $A_{\Delta}(a,\lambda)$ we  refer to them as multiply connected preimages of $A_0$ surrounding $z=0$.

The goal of the rest of this section is to study how the  Fatou components  $A_{\Delta}(a,\lambda)$ depend on the parameters. It  will be convenient to denote them keeping their dependence with respect to $a$ and $\lambda$.  Since their boundaries are eventually mapped under iteration onto $\partial A^*_{a,\lambda}(\infty)$, we first analyse how this set depends on $\lambda$.

\begin{propo}\label{contboundary}
Let $a\in\dis^*$. Then, for all $\lambda\in\dis_{\mathcal{C}(a)}$, the set  $\partial A^*_{a,\lambda}(\infty)$ depends continuously on $\lambda$.

\end{propo}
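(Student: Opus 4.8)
The plan is to establish continuity of $\partial A^*_{a,\lambda}(\infty)$ in $\lambda$ by exhibiting a continuously varying quasiconformal (in fact conformal on the relevant piece) uniformization of the immediate basin and pushing the continuity of that uniformization to the boundary. Fix $a\in\dis^*$ and fix a base parameter $\lambda_0\in\dis_{\mathcal{C}(a)}$. By Theorem~\ref{thmcritzeros}(a), for every $\lambda\in\dis_{\mathcal{C}(a)}$ the immediate basin $A^*_{a,\lambda}(\infty)$ is simply connected with quasicircle boundary, is invariant, is mapped onto itself with degree $4$, and contains exactly the superattracting fixed point $z=\infty$, the pole $z_\infty=1/\overline a$, and the critical point $c_+(a,\lambda)$. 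So $B_{a,\lambda}$ restricted to $A^*_{a,\lambda}(\infty)$ is a degree-$4$ proper self-map with a single (super)attracting fixed point; the first step is to show this restriction is conformally conjugate to a fixed model — concretely, to a Blaschke product of the form $z\mapsto e^{2\pi i\theta(\lambda)} z^3\,\frac{z-\beta(\lambda)}{1-\overline{\beta(\lambda)}\,z}$ on $\wcomminusdis$ (equivalently on $\dis$ after an inversion), via the Riemann map $\Phi_\lambda$ of $A^*_{a,\lambda}(\infty)$ normalized to send $\infty\mapsto\infty$ with, say, a fixed derivative condition. This is the same normal-form argument (cf.\ \cite[Lemma 15.5]{Mi1}) already used in the proof of Proposition~\ref{conjblas}.

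Next I would prove that $\Phi_\lambda$ depends continuously on $\lambda$. Since $B_{a,\lambda}(z)$ and all of $z=\infty$, $z_\infty=1/\overline a$, $c_+(a,\lambda)$ vary holomorphically (hence continuously) with $\lambda$, and the domain $A^*_{a,\lambda}(\infty)$ moves continuously in the Carathéodory / Hausdorff sense — this last point follows because the Böttcher-type coordinate near $\infty$ varies holomorphically and the basin is the ascending union of its pullbacks, which are compact pieces varying continuously — the normalized Riemann maps $\Phi_\lambda$ converge locally uniformly on compacta as $\lambda\to\lambda_0$ (Carathéodory kernel convergence). Because $\partial A^*_{a,\lambda}(\infty)$ is a quasicircle uniformly in $\lambda$ (the quasiconformal dilatation can be taken locally bounded in $\lambda$, as the surgery/estimates producing $\mathcal{C}(a)$ are uniform on compact subsets of $\dis_{\mathcal{C}(a)}$), each $\Phi_\lambda$ extends to a uniformly quasiconformal homeomorphism of $\wcom$; then local uniform convergence of $\Phi_\lambda$ on $\wcomminusdis$ together with uniform quasiconformality forces uniform convergence of the extensions on all of $\wcom$, in particular on $\cercle$. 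Consequently $\partial A^*_{a,\lambda}(\infty)=\Phi_\lambda(\cercle)$ depends continuously on $\lambda$ in the Hausdorff metric, which is the assertion.

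Concretely the order of steps is: (1) record from Theorem~\ref{thmcritzeros}(a) the invariance, degree, and critical data of $A^*_{a,\lambda}(\infty)$; (2) build the normalized Riemann map $\Phi_\lambda$ and identify the conjugated map as a Blaschke product, noting $\beta(\lambda),\theta(\lambda)$ vary continuously; (3) prove Carathéodory kernel convergence $A^*_{a,\lambda}(\infty)\to A^*_{a,\lambda_0}(\infty)$, hence $\Phi_\lambda\to\Phi_{\lambda_0}$ locally uniformly; (4) upgrade to uniform convergence up to the boundary using uniform quasiconformality of the $\Phi_\lambda$; (5) conclude Hausdorff continuity of $\partial A^*_{a,\lambda}(\infty)=\Phi_\lambda(\cercle)$. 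The main obstacle I anticipate is step (3)–(4): one needs the uniform (in $\lambda$, on compact parameter sets) control of the quasiconformal dilatation of $\partial A^*_{a,\lambda}(\infty)$, and the fact that the basin varies continuously as a Carathéodory kernel rather than merely that its defining data do. Both should be extractable from the constructions behind Theorem~\ref{thmcritzeros} in \cite{Can1} — indeed the proof there produces $\partial A^*_{a,\lambda}(\infty)$ by a surgery whose estimates are locally uniform in $\lambda$ — but making this explicit is the delicate point; alternatively one can sidestep part of it by combining the Böttcher coordinate of $\infty$ (holomorphic in $\lambda$) with a hyperbolic-metric distortion argument to get equicontinuity of the $\Phi_\lambda^{-1}$ on $\dis$ directly.
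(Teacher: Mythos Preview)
Your approach is genuinely different from the paper's, and while the outline is plausible, the obstacles you flag in steps (3)--(4) are real and not easily dispatched with the tools at hand. You need (i) Carath\'eodory kernel convergence of $A^*_{a,\lambda}(\infty)$ as $\lambda\to\lambda_0$, and (ii) a \emph{uniform} bound on the quasiconformal dilatation of the $\Phi_\lambda$ (equivalently, uniform-in-$\lambda$ quasicircle constants for $\partial A^*_{a,\lambda}(\infty)$). Theorem~\ref{thmcritzeros} only asserts that $\partial A^*_{a,\lambda}(\infty)$ is a quasicircle for each $\lambda$, not with locally uniform dilatation; extracting that from the construction in \cite{Can1} is possible but amounts to reproving a sharpened version of that theorem. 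Your proposed workaround for (i) --- pulling back the B\"ottcher domain --- is essentially circular: the pullbacks under $B_{a,\lambda}$ fill out the basin precisely because the boundary is where it is, so you cannot use them to locate the boundary without already knowing it moves continuously.

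The paper sidesteps both issues by working with polynomial-like maps rather than Riemann maps. For each $\lambda_0$ it chooses a Jordan curve $\gamma$ (in some $A_n(a,\lambda_0)$) surrounding $0$ and the critical value $v_-(a,\lambda_0)$; by Proposition~\ref{preimcorbes} the relevant preimage $\gamma^{-1}_\lambda$ is mapped $4$-to-$1$ onto $\gamma$, so $\{B_{a,\lambda},\ \rm{Ext}(\gamma^{-1}_\lambda),\ \rm{Ext}(\gamma)\}$ is a holomorphic family of degree-$4$ polynomial-like maps with $\mathcal{J}_{f_\lambda}=\partial A^*_{a,\lambda}(\infty)$. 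Since the only critical points in $\rm{Ext}(\gamma^{-1}_\lambda)$ are $c_+(a,\lambda)$ and $\infty$, both of which lie in $A^*_{a,\lambda}(\infty)$, the family is $\mathcal{J}$-stable, and Douady--Hubbard's \cite[Proposition~10]{DH1} gives the continuity of $\mathcal{J}_{f_\lambda}$ directly. A separate analogous construction handles $\lambda=0$. The payoff of this route is that $\mathcal{J}$-stability of polynomial-like families yields continuity of Julia sets from purely combinatorial control of the critical orbits, with no need for uniform quasiconformal bounds or kernel convergence of the basins; your approach, by contrast, would give a slightly stronger conclusion (an explicit continuously varying uniformization) at the cost of substantially more analytic input.
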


\proof

The idea of this proof is to build a holomorphic family of polynomial-like mappings for which the dynamics on $A_{a,\lambda}^*(\infty)$ is preserved. A polynomial-like map is a triple $\{f,U,V\}$ where $U$ and $V$ are simply connected domains bounded by analytic curves, $U$ is compactly contained in $V$, and $f:U\rightarrow V$ is holomorphic and proper of a given degree $k$  (see~\cite{DH1}). The filled Julia set $\mathcal{K}_f$ of a polynomial-like map is defined as the set of points which never escape $U$ under iteration of $f$, i.e.\ $\mathcal{K}_f=\{z\in U \; | \; f^n(z)\in U \; \forall n\in\nat\}$. Its Julia set is defined as $\mathcal{J}_f=\partial\mathcal{K}_f$. 

We begin by building a family of polynomial-like mappings $\{f_{\lambda},U_{\lambda},V\}$ depending holomorphically on $\lambda$ and such that $\mathcal{J}_{f_{\lambda}}=\partial A^*_{a,\lambda}(\infty)$.
Fix $\lambda_0\in\dis_{\mathcal{C}(a)}^*$. By Corollary~\ref{anellsacumulen}, we know that there are multiply connected Fatou components  $A_n(a,\lambda_0)$ which  accumulate on $\partial A_{a,\lambda_0}^*(\infty)$ and surround $z=0$. Hence, we can fix $n$ such that $A_n(a,\lambda_0)$  surrounds $z=0$ and the critical value $v_-(a,\lambda_0)=B_{a,\lambda_0}(c_-(a,\lambda_0))$. We can take an analytic Jordan curve $\gamma\subset A_n(a,\lambda_0)$ surrounding $z=0$ and the critical value $v_-(a,\lambda_0)$. By proposition~\ref{preimcorbes} we know that there is a preimage $\gamma^{-1}_{\lambda_0}\subset A_{n+1}(a,\lambda_0)$ of $\gamma$ which surrounds $z=0$ and is mapped with degree 4 onto $\gamma$. The curve $\gamma^{-1}_{\lambda_0}$ also surrounds the critical point $c_-(a,\lambda_0)$.

 The triple $\{f_{\lambda_0}=B_{a,\lambda_0}, \rm{Ext}(\gamma^{-1}_{\lambda_0}), \rm{Ext}(\gamma)\}$ is a polynomial-like map of degree 4. Moreover, we have that $\mathcal{K}_{f_{\lambda_0}}=\overline{A_{a,\lambda_0}^*(\infty)}$ and that $\mathcal{J}_{f_{\lambda_0}}=\partial A^*_{a,\lambda_0}(\infty)$. If we fix $\gamma$ and take $\lambda$ in a neighbourhood $\Lambda$ of $\lambda_0$, then there is a connected component $\gamma^{-1}_{\lambda}$ of $B_{a,\lambda}^{-1}(\gamma)$ which surrounds $z=0$, moves holomorphically with respect to $\lambda$ and coincides with $\gamma_{\lambda_0}^{-1}$ for $\lambda=\lambda_0$. If the neighbourhood of $\Lambda$ is small enough, then $\gamma^{-1}_{\lambda}$ is an analytic Jordan curve,   $\gamma\subset \rm{Int}(\gamma^{-1}_{\lambda})$, $\gamma^{-1}_{\lambda}$ surrounds the critical point $c_-(a,\lambda)$ and $\gamma^{-1}_{\lambda}$ is mapped onto $\gamma$ with degree $4$ under $B_{a,\lambda}$. Hence, we have a family of polynomial-like maps $\{f_\lambda=B_{a,\lambda}, \rm{Ext}(\gamma^{-1}_{\lambda}), \rm{Ext}(\gamma)\}_{\lambda\in\Lambda}$ which depend holomorphically on $\lambda$ and such that $\mathcal{J}_{f_{\lambda}}=\partial A^*_{a,\lambda}(\infty)$. Moreover, since the only critical points of $B_{a,\lambda}$ in $\rm{Ext}(\gamma^{-1}_{\lambda})$ are $c_+(a,\lambda)$ and $z=\infty$ and both of them belong to $A^*_{a,\lambda}(\infty)$, it follows that the family of polynomial-like maps is $\mathcal{J}$-stable and the sets $\mathcal{J}_{f_{\lambda}}=\partial A^*_{a,\lambda}(\infty)$ depend continuously on $\lambda$ (see \cite[Proposition~10]{DH1}).

To finish the proof we need to show that the same construction can be made in a neighbourhood of the parameter $\lambda=0$. To do so, it is enough to find  analytic Jordan curves $\gamma,\gamma_0^{-1}\subset \dis$ such that $\gamma$ surrounds $z=0$ and the critical value $v_-(a)$, that $\gamma_0^{-1}$ surrounds $z=0$ and the critical point $c_-(a)$ and is mapped with degree 4 onto $\gamma$ under $B_{a,0}=B_a$, and that $\gamma\subset\rm{Int}(\gamma_0^{-1})$. The existence of the curves $\gamma$ and $\gamma_0^{-1}$ for $B_a$ is shown in \cite[Proposition~2.2]{Can1}. Using the curve $\gamma$ we can build a family of $\mathcal{J}$-stable polynomial-like maps $\{f_\lambda=B_{a,\lambda}, \rm{Ext}(\gamma^{-1}_{\lambda}), \rm{Ext}(\gamma)\}_{\lambda\in\Lambda}$. As before, we conclude that the sets $\mathcal{J}_{f_{\lambda}}=\partial A^*_{a,\lambda}(\infty)$ depend continuously on $\lambda$  for all $\lambda\in\Lambda$.

\endproof

\begin{rem} It follows from Proposition~\ref{contboundary} that the quasicircles $\partial A^*_{a,\lambda}(\infty)$ are continuous deformations of the unit circle  $\cercle=\partial A^*_{a,0}(\infty)$.
\end{rem}

The next result is a direct corollary of Proposition~\ref{contboundary}. The proof is straightforward using that $\partial A_0(a,\lambda)\subset B_{a,\lambda}^{-2}(\partial A^*_{a,\lambda}(\infty))$ and that there is no critical point on $\partial T_0(a,\lambda) \cup\partial A_0(a,\lambda)$.  

\begin{co}\label{continuitatanells}
The annulus $A_0(a,\lambda)$ moves continuously with respect to $\lambda$ for all $\lambda\in\dis_{\mathcal{C}(a)}^*$. Moreover, fixed any $n\in\nat$, the set $B_{a,\lambda}^{-n}(A_0(a,\lambda))$ moves continuously with respect to $\lambda$.

\end{co}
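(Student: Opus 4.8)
The plan is to deduce the statement from Proposition~\ref{contboundary} together with the elementary fact that, for a holomorphic family of rational maps of fixed degree, taking preimages is a continuous operation on non-empty compact sets in the Hausdorff metric. Precisely: if $\{g_\lambda\}$ is a holomorphic family of rational maps of degree $d\ge 1$ and $\{K_\lambda\}$ is a family of non-empty compact subsets of $\wcom$ depending continuously on $\lambda$, then $\lambda\mapsto g_\lambda^{-1}(K_\lambda)$ is continuous. Upper semicontinuity is immediate from the joint continuity of $(\lambda,z)\mapsto g_\lambda(z)$ together with compactness; lower semicontinuity uses that each $g_\lambda$ is open, via a Rouch\'e/Hurwitz argument (any $z_\ast\in g_{\lambda_0}^{-1}(K_{\lambda_0})$ has, for $\lambda$ near $\lambda_0$, arbitrarily close points mapped by $g_\lambda$ into a small disk around $g_{\lambda_0}(z_\ast)$, which meets $K_\lambda$). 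I would record this once and then apply it repeatedly.

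First I would apply this fact with $g_\lambda=B_{a,\lambda}$ and $K_\lambda=\partial A^*_{a,\lambda}(\infty)$, which moves continuously by Proposition~\ref{contboundary}, and then once more, to obtain that both $B_{a,\lambda}^{-1}(\partial A^*_{a,\lambda}(\infty))$ and $B_{a,\lambda}^{-2}(\partial A^*_{a,\lambda}(\infty))$ depend continuously on $\lambda$ for $\lambda\in\dis_{\mathcal{C}(a)}^*$. By Theorem~\ref{thmcritzeros} one has $B_{a,\lambda}^{-1}(\partial A^*_{a,\lambda}(\infty))=\partial A^*_{a,\lambda}(\infty)\cup\partial T_0(a,\lambda)$ and hence $B_{a,\lambda}^{-2}(\partial A^*_{a,\lambda}(\infty))=\partial A^*_{a,\lambda}(\infty)\cup\partial T_0(a,\lambda)\cup\partial A_0(a,\lambda)\cup\partial D_0(a,\lambda)$, these being pairwise disjoint unions of Jordan curves, with no critical point of $B_{a,\lambda}$ on $\partial T_0(a,\lambda)\cup\partial A_0(a,\lambda)$.

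Next I would isolate the piece $\partial A_0(a,\lambda)$ via the following standard observation: if $X_\lambda\to X_{\lambda_0}$ in the Hausdorff metric and $X_{\lambda_0}=Y\sqcup Z$ with $Y,Z$ non-empty disjoint compacta at distance $2\delta>0$, then for $\lambda$ near $\lambda_0$ one has $X_\lambda=\bigl(X_\lambda\cap N_\delta(Y)\bigr)\sqcup\bigl(X_\lambda\cap N_\delta(Z)\bigr)$, with each piece depending continuously on $\lambda$ and equal to $Y$, resp.\ $Z$, at $\lambda_0$, where $N_\delta(\cdot)$ is the $\delta$-neighbourhood. Fixing an arbitrary $\lambda_0\in\dis_{\mathcal{C}(a)}^*$ and applying this first to split $\partial A^*_{a,\lambda}(\infty)$ off from $\partial T_0(a,\lambda)$, and then to split the four-piece decomposition above — using the characterisation of $T_0$ as the component of $B_{a,\lambda}^{-1}(A^*_{a,\lambda}(\infty))$ other than $A^*_{a,\lambda}(\infty)$, and of $A_0$ as the annular component (as opposed to the simply connected $D_0$) of $B_{a,\lambda}^{-1}(T_0(a,\lambda))$ — I obtain that $\partial A_0(a,\lambda)$, equivalently $\overline{A_0(a,\lambda)}$ (the region bounded by these two curves containing the critical points of Theorem~\ref{thmcritzeros}), depends continuously on $\lambda$; since $\lambda_0$ was arbitrary this holds on all of $\dis_{\mathcal{C}(a)}^*$. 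For the ``moreover'' part, a further $n$-fold application of the preimage-continuity fact to the continuously moving family $\overline{A_0(a,\lambda)}$ gives that $B_{a,\lambda}^{-n}(\overline{A_0(a,\lambda)})$ moves continuously with $\lambda$; no assumption on critical points is needed here, since continuity of the full preimage set holds regardless of ramification (only the combinatorial structure of the components could jump, not the set itself).

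The main (and essentially only) obstacle is the middle step: verifying that the curves $\partial A^*_{a,\lambda}(\infty)$, $\partial T_0(a,\lambda)$, $\partial A_0(a,\lambda)$, $\partial D_0(a,\lambda)$ are genuinely pairwise disjoint, so that the isolated-piece observation applies and picks out exactly $\partial A_0(a,\lambda)$. This is where the hypothesis that there is no critical point on $\partial T_0(a,\lambda)\cup\partial A_0(a,\lambda)$ — together with the quasicircle structure of $\partial A^*_{a,\lambda}(\infty)$ and the Blaschke-conjugacy of Proposition~\ref{conjblas} on the annulus between $A_0$ and $A^*_{a,\lambda}(\infty)$ — gets used: it guarantees that $\partial T_0$ and $\partial A_0$ are well-separated Jordan curves rather than touching one another or the remaining parts of the second preimage of $\partial A^*_{a,\lambda}(\infty)$.
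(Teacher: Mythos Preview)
Your proposal is correct and follows essentially the same approach as the paper: the paper's proof is the one-line remark that the result is ``straightforward using that $\partial A_0(a,\lambda)\subset B_{a,\lambda}^{-2}(\partial A^*_{a,\lambda}(\infty))$ and that there is no critical point on $\partial T_0(a,\lambda)\cup\partial A_0(a,\lambda)$,'' and you have unpacked exactly these two ingredients --- Hausdorff continuity of full preimages under a holomorphic family, plus isolation of the $\partial A_0$ piece via disjointness of the four boundary curves. Your observation that the ``moreover'' part needs no critical-point hypothesis (since only the full preimage set is claimed to move continuously, not its decomposition into components) is also the correct reading; the invocation of Proposition~\ref{conjblas} for disjointness is more than you need, as Theorem~\ref{thmcritzeros} already gives the nested structure directly, but it does no harm.
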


The previous corollary applies to the union of all Fatou components which are mapped under $n$ iterations onto $A_0(a,\lambda)$. However, we have to be more careful when talking about the continuity of a concrete preimage $A(a,\lambda)$ of $A_0(a,\lambda)$ with respect to $\lambda$. It follows from the Riemann-Hurwitz formula (see Theorem~\ref{riemannhurwitz})  that all preimages of the annulus $A_0(a,\lambda)$ are doubly connected until one of them, say $U_c(a,\lambda)$, contains the critical point $c_-(a,\lambda)$ (compare with the proof of Lemma~\ref{fatouinfinit}). In Lemma~\ref{fatouinfinit} we  show that, if such $U_c(a,\lambda)$ exists, it has connectivity 3. 
The following proposition describes how is the transition from double to triple connectivity  when $c_-(a,\lambda)\in\partial U_c(a,\lambda)$ (see Figure \ref{esquemapunxat}). 

\begin{propo}\label{confpinching}
Assume that there is a preimage $U_c(a,\lambda)$ of the annulus $A_0(a,\lambda)$ such that $c_-(a,\lambda)\in\partial U_c(a,\lambda)$. Then $U_c(a,\lambda)$ is doubly connected and exactly one of the following holds.

\begin{itemize}

\item One connected component of $\partial U_c(a,\lambda)$ is a Jordan curve. The other connected component of $\partial U_c(a,\lambda)$ consists of the union of two Jordan curves attached at the critical point $c_-(a,\lambda)$.

\item Both connected components of $\partial U_c(a,\lambda)$ are Jordan curves. There is an extra preimage $U_c'(a,\lambda)$ of $A_0(a,\lambda)$ such that $\partial U_c(a,\lambda)\cap \partial U_c'(a,\lambda)=c_-(a,\lambda)$.  Moreover, if $U_c(a,\lambda)$ surrounds $z=0$, then $U_c'(a,\lambda)$ cannot surround $z=0$.

\end{itemize}

\end{propo}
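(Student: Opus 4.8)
The plan is to obtain the connectivity of $U_c=U_c(a,\lambda)$ from the Riemann--Hurwitz formula (Theorem~\ref{riemannhurwitz}) and then to read off the boundary structure from a local analysis of $B_{a,\lambda}$ at $c_-=c_-(a,\lambda)$, transporting the picture to the Blaschke model of Proposition~\ref{conjblas} where the relevant configuration has already been treated in Proposition~\ref{preimcorbes}. First, since $c_-\in\partial U_c$ and the boundary of a Fatou component lies in the Julia set, $c_-\in\mathcal{J}(B_{a,\lambda})$; as Fatou components map to Fatou components, no forward iterate of $U_c$ contains the point $c_-$, and neither does $U_c$ itself. Hence along the backward orbit from $A_0$ ending at $U_c$ no preimage of $A_0$ contains $c_-$ in its interior, so by the reasoning of Lemma~\ref{fatouinfinit} all of them, in particular $U_c$ and $V:=B_{a,\lambda}(U_c)$, are doubly connected, and $V$ is a genuine annulus bounded by two disjoint Jordan curves $\delta_1,\delta_2$ (push forward: its forward images reach $A_0$, whose boundary consists of two Jordan curves, none hit by a critical point). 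Since $c_-$ lies on a single boundary component of $U_c$, the critical value $v_-:=B_{a,\lambda}(c_-)$ lies on exactly one of $\delta_1,\delta_2$, say $\delta_1$. This already settles that $U_c$ is doubly connected.

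For the dichotomy of boundary types, note that $B_{a,\lambda}$ has local degree $2$ at $c_-$, so $B_{a,\lambda}^{-1}(\delta_1)$ meets a small neighbourhood of $c_-$ in two analytic arcs crossing transversally at $c_-$, dividing it into four sectors on which $B_{a,\lambda}$ maps alternately to the two sides of $\delta_1$. As $B_{a,\lambda}(U_c)=V$ lies locally on one fixed side of $\delta_1$, near $c_-$ the set $U_c$ is the union of one or both of the two opposite sectors mapping to the $V$--side, and $c_-\in\partial U_c$ forces at least one of them into $U_c$. This gives two exhaustive, mutually exclusive cases: either (i) both such sectors lie in $U_c$, so that $c_-$ is a cut point of $\overline{U_c}$, or (ii) exactly one does, the opposite one lying in a different component $U_c'$ of $B_{a,\lambda}^{-1}(V)$ (equivalently, a preimage of $A_0$).

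To identify the boundary structure, observe that $\delta_1$ is a Jordan curve carrying the single simple critical value $v_-$ (the other critical values in play lie in $T_0$), so the component of $B_{a,\lambda}^{-1}(\delta_1)$ containing $c_-$ is a connected graph whose only branching is the $4$--valent vertex $c_-$; a short covering-space count — or, more transparently, transporting everything to the Blaschke model $B_{b,t}$ of Proposition~\ref{conjblas} (legitimate since $c_-,U_c\subset\mathcal{A}_{out}$) and rereading the $B_{b,t}(c_{b,t})\in\eta$ case in the proof of Proposition~\ref{preimcorbes} — shows this component is the union of exactly two Jordan curves meeting only at $c_-$. The boundary component of $U_c$ through $c_-$ lies inside this set, while the other boundary component of $U_c$ lies in $B_{a,\lambda}^{-1}(\delta_2)$ and is a Jordan curve since $\delta_2$ carries no critical value. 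In case (i), $\partial U_c$ near $c_-$ uses all four arcs, so its boundary component through $c_-$ is exactly this union of two Jordan curves attached at $c_-$: the first bullet. In case (ii), $\partial U_c$ near $c_-$ uses one arc from each crossing branch, hence is locally a single arc through $c_-$ that closes up into a Jordan curve, and likewise for $U_c'$; so both boundary components of $U_c$ (and of $U_c'$) are Jordan curves and $\partial U_c\cap\partial U_c'=\{c_-\}$: the first part of the second bullet. Finally, if in case (ii) $U_c$ surrounds $z=0$, then $V$ surrounds $z=0$ by Lemma~\ref{fatouinfinit}; both $U_c$ and $U_c'$ are connected preimages of $A_0$ touching $c_-\in\mathcal{A}_{out}$, hence are contained in $\mathcal{A}_{out}$ (a connected subset of the Fatou set meeting the open annulus $\mathcal{A}_{out}$, whose boundary lies in $\mathcal{J}(B_{a,\lambda})$, is contained in $\mathcal{A}_{out}$), and both map onto $V$; but by Lemma~\ref{deforderwell} (equivalently, the last two cases of Proposition~\ref{preimcorbes}) a Fatou component surrounding $z=0$ has exactly one preimage in $\mathcal{A}_{out}$ that surrounds $z=0$, so $U_c'$ cannot surround $z=0$.

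The step I expect to be the main obstacle is the boundary-structure one: rigorously passing from the intuitive crossing picture at $c_-$ to the statement that the component of $B_{a,\lambda}^{-1}(\delta_1)$ through $c_-$ consists of exactly two Jordan curves meeting only there, and correctly pairing the two local cases with the two bullets. Reducing to the Blaschke model of Proposition~\ref{conjblas}, where this is precisely the already-settled boundary case $B_{b,t}(c_{b,t})\in\eta$ of Proposition~\ref{preimcorbes}, is what makes it tractable.
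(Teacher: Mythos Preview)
Your proof is correct and follows essentially the same route as the paper's: use Riemann--Hurwitz to see that $U_c$ is doubly connected (since $c_-$ lies in no iterated image of $U_c$), then analyse $\partial U_c$ as preimages of the two Jordan boundary curves of $V=B_{a,\lambda}(U_c)$, with Proposition~\ref{preimcorbes} supplying both the figure-eight structure of the preimage of the curve carrying the critical value and the uniqueness of the preimage surrounding $z=0$. You spell out the local sector picture at $c_-$ and the resulting case split (i)/(ii) more explicitly than the paper, which simply asserts that the relevant preimage component is a union of two Jordan curves meeting at $c_-$ and that only the two listed configurations can occur.
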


\begin{proof}
 Since $c_-(a,\lambda)$ is not mapped under iteration into $A_0(a,\lambda)$, it follows from the Riemann-Hurwitz formula (see Theorem~\ref{riemannhurwitz}) that all preimages of $A_0(a,\lambda)$ are doubly connected, and so is $U_c(a,\lambda)$. Moreover, all connected components of $\partial B_{a,\lambda}^{-n}(A_0(a,\lambda))$ are Jordan curves until we find some $n>0$ such that $B_{a,\lambda}^n(c_-(a,\lambda))\in\partial A_0(a,\lambda)$. In particular, $\partial B_{a,\lambda}(U_c(a,\lambda))$ is the union of two Jordan curves, say $\beta_1$ and $\beta_2$. We may assume, without lost of generality, that $B_{a,\lambda}(c_-(a,\lambda))\in\beta_1$. Let $\beta_1'$ and $\beta_2'$  be the  connected components of the preimages of $\beta_1$ and $\beta_2$, respectively, which have no-empty intersection with $\partial U_c(a,\lambda)$. Then $\beta_1'$ consists of the union of two Jordan curves which intersect at $c_-(a,\lambda)$ while $\beta_2'$ is a Jordan curve. By Proposition~\ref{preimcorbes}, at most one of the two Jordan curves which form $\beta_1'$ can surround $z=0$. We  conclude that only the two possibilities of the statement can take place. 

\end{proof}

\begin{figure}[hbt!]
    \centering
    
     \subfigure{
    \def\svgwidth{100pt}
    \subimport{figures/}{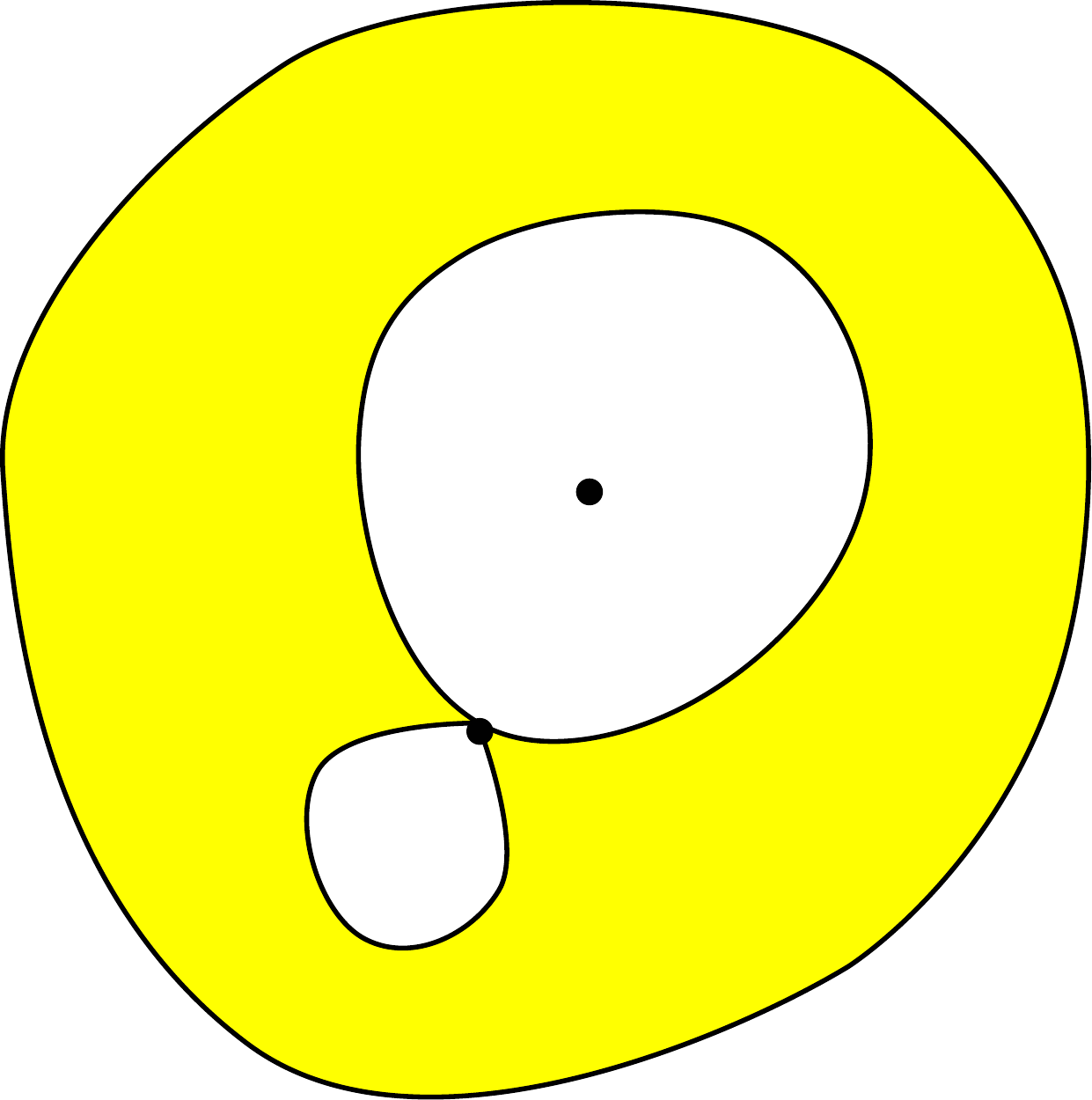_tex}}
    \hspace{0.1in}
    \subfigure{
    \def\svgwidth{150pt}
    \subimport{figures/}{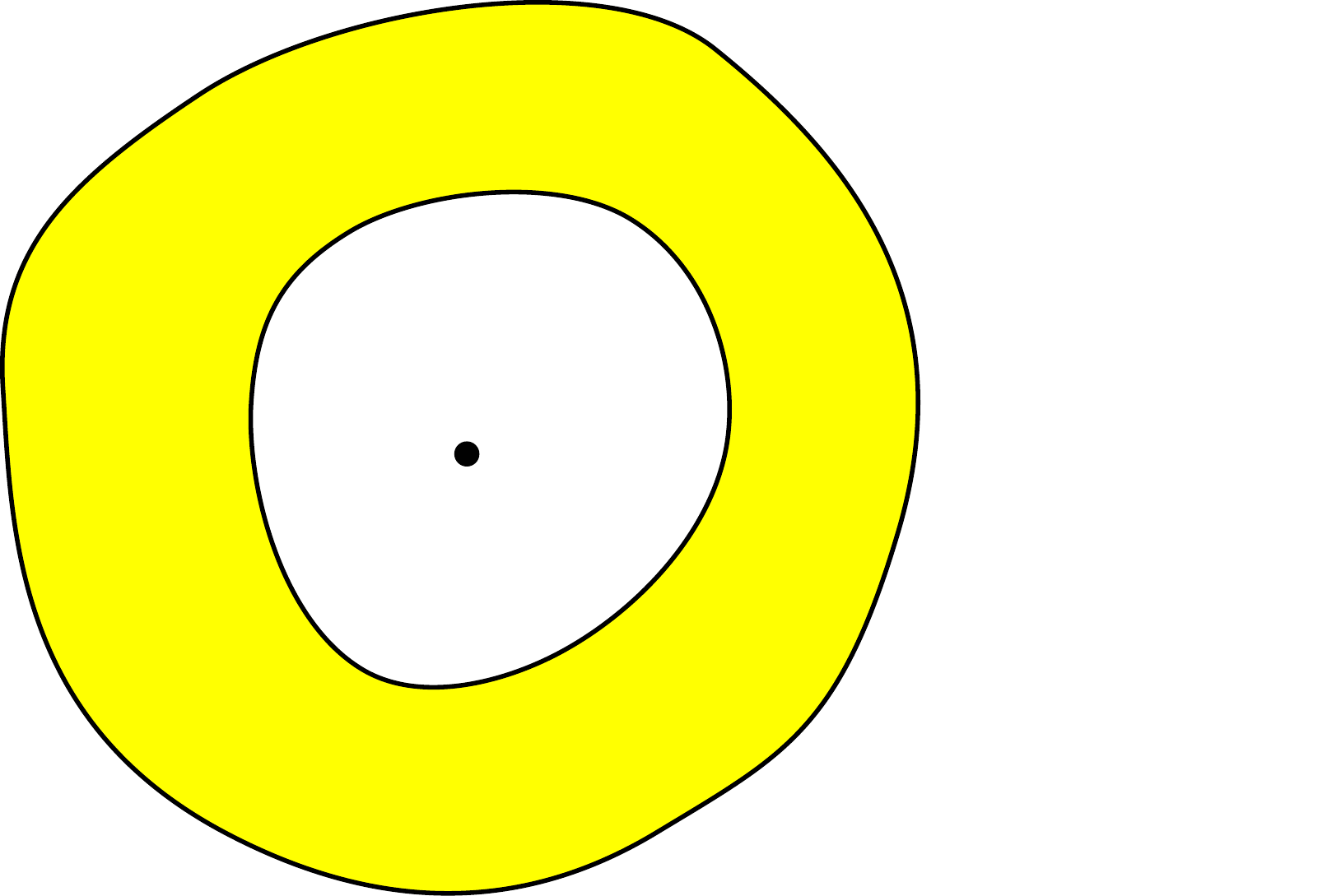_tex}}

    \caption{\small Scheme of the two possible configurations for a preimage $U_c$ of $A_0$ such that $c_-\in\partial U_c$. }

    \label{esquemapunxat}
\end{figure}

\begin{rem}\label{remconfpinching}
It follows from the two possible configurations described in Proposition~\ref{confpinching} that if a preimage $A_{\Delta}(a,\lambda)$ surrounds $z=0$, then it will permanently surround $z=0$ when we move the parameter $\lambda$. Indeed, the possible problems can only occur when the critical point $c_-(a,\lambda)$ crosses it (or an iterated image of it). If we move $\lambda$ close to a parameter which corresponds  to the second case of Proposition~\ref{confpinching}, an extra preimage $A'(a,\lambda)$, which does not surround $z=0$, may separate from $A_{\Delta}(a,\lambda)$ or merge with it. However, the set $A_{\Delta}(a,\lambda)$ surrounds continuously $z=0$ when we move $\lambda$.
\end{rem}

We finish this section investigating  the ordering of the sets $A_{\Delta}(a,\lambda)$ as in Lemma~\ref{deforderwell}. A sequence $\Delta$ denotes a unique multiply connected preimage $A_{\Delta}$ of $A_0$ with dynamics described by $\Delta$. We can stablish an order between these sets which relates  their sequences and their relative position in the dynamical plane. This ordering is introduced in the following definition. To simplify notation, it also takes into consideration the case $A_{\Delta}=A_n$ as in Corollary~\ref{anellsacumulen}.

\begin{defin}\label{defordering}
 We say that $\Delta_1\prec\Delta_2$ if the Fatou component $A_{\Delta_2}$ surrounds $A_{\Delta_1}$. We say that $\Delta\prec n$  (or  $n\prec\Delta$)  if $A_n$ surrounds $A_{\Delta}$ (or if $A_{\Delta}$ surrounds $ A_n$). We say that $n\prec m$ if $A_m$ surrounds $A_n$.
\end{defin}

\begin{rem}
From Corollary~\ref{anellsacumulen} we know that $A_{n+1}$ surrounds $A_{n}$. Therefore,  $n\prec m$ if and only if $n<m$.

\end{rem}

   It follows from Proposition~\ref{confpinching} that the order introduced in Definition~\ref{defordering} is consistent and cannot vary with $\lambda$ (c.f.~Remark~\ref{remconfpinching}). It can also be proven that it does neither depend on $a$. We can order the different sequences using their 0's and 1's.  The next lemma explains how the number of 0's at the begining of the sequence influences the order, which is the only thing that we use in this paper (see Theorem~\ref{thmb0}). The general case is  more complicated.

\begin{lemma}\label{ordering}
Let $\Delta$ be a finite sequence of 0's and 1's which contains, at least, one symbol 1. Assume that $\Delta$ begins with $n_{\Delta}$ 0's followed by a 1 (if $\Delta=\{1,\cdots\}$, then $n_{\Delta}=0$). Then, the following hold.

\begin{itemize}
\item If $n_{\Delta}=0$, then $\Delta \prec 0$.
\item If $n_{\Delta}>0$, then $n_{\Delta}-1\prec \Delta \prec n_{\Delta}$.
\end{itemize} 

\end{lemma}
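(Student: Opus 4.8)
The plan is to prove the claim by tracking a Jordan curve $\gamma\subset A_0(a,\lambda)$ surrounding $z=0$ and analysing, step by step, which connected component of its preimages surrounds $z=0$, using repeatedly Proposition~\ref{preimcorbes} together with the relative positions of $\mathcal{A}_{in}$, $\mathcal{A}_{out}$, $A_0$ and $\partial A^*_{a,\lambda}(\infty)$. The central observation is that $A_0$ itself is the ``$0$-th'' level and that the annulus $A_1=A_{\{0\}}$ sits inside $\mathcal{A}_{out}$ (hence outside $A_0$, closer to $\partial A^*(\infty)$), while $A_{\{1\}}$ sits inside $\mathcal{A}_{in}$ (hence between $\partial T_0$ and $A_0$, i.e.\ closer to $z=0$). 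More precisely, from Proposition~\ref{preimcorbes} the (unique) preimage of a curve $\gamma$ surrounding $z=0$ lying in $\mathcal{A}_{in}$ always surrounds $z=0$ and lies in the bounded-side direction (between $\partial T_0$ and $\gamma$'s preimage in $\mathcal{A}_{out}$), because $\mathcal{A}_{in}$ maps $2$-to-$1$ onto $\mathcal{A}_{0,\infty}$ with no pole, whereas the preimage in $\mathcal{A}_{out}$ that surrounds $z=0$ lies on the unbounded side. This gives the basic monotonicity: prepending a $0$ to a sequence moves the corresponding component \emph{outward} (toward $\partial A^*(\infty)$), and the component $A_{\{1,\cdots\}}$ lies strictly inside $A_0$.

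For the first bullet, $n_\Delta=0$ means $\Delta=\{1,i_1,\dots,i_{p-1}\}$, so $A_\Delta\subset\mathcal{A}_{in}$ by Lemma~\ref{deforderwell}. Since $\mathcal{A}_{in}$ is the annulus bounded by $\partial T_0$ and $\overline{A_0}$, the whole of $A_\Delta$ lies on the $z=0$ side of $A_0$; by Definition~\ref{defordering} this says precisely $\Delta\prec 0$. I would spell this out by noting that $A_\Delta$ and $A_0$ are disjoint Jordan-type sets both surrounding $z=0$, and $A_\Delta\subset\mathcal{A}_{in}$ forces $A_0$ to surround $A_\Delta$.

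For the second bullet, suppose $\Delta$ begins with $n_\Delta>0$ zeros followed by a $1$; write $\Delta=\{\,\underbrace{0,\dots,0}_{n_\Delta},1,\dots\,\}$. Consider the image $\Delta'$ obtained by deleting the first $n_\Delta$ symbols, so $\Delta'$ begins with a $1$ and $B_{a,\lambda}^{n_\Delta}(A_\Delta)=A_{\Delta'}$. By the first bullet, $A_{\Delta'}\subset\mathcal{A}_{in}$, i.e.\ $A_{\Delta'}$ lies between $\partial T_0$ and $A_0$; in the language of Corollary~\ref{anellsacumulen}, $A_{\Delta'}$ is ``inside $A_0=$level $0$'', that is $\Delta'\prec 0$. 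Now I would push this relation back through $n_\Delta$ iterations. The key auxiliary fact — which I would isolate as a short sublemma — is \emph{monotonicity of the labelling under preimages in $\mathcal{A}_{out}$}: if $U$ is a Fatou component surrounding $z=0$ with $U$ between $A_{m}$ and $A_{m+1}$ (with the convention that ``$A_m$'' for $m=0$ is $A_0$ and level $-1$ is $\partial T_0$-side), then its preimage in $\mathcal{A}_{out}$ that surrounds $z=0$ lies between $A_{m+1}$ and $A_{m+2}$, because $B_{a,\lambda}$ maps $A_{n+1}$ onto $A_n$ with $A_n\subset\mathrm{Bdd}(A_{n+1})$ (Corollary~\ref{anellsacumulen}) and maps each ``slice'' of $\mathcal{A}_{out}$ between consecutive $A_n$'s onto the next one out, monotonically with respect to the nesting order. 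Applying this sublemma $n_\Delta$ times starting from the relation ``$A_{\Delta'}$ lies between $\partial T_0$ and $A_0$'' yields ``$A_\Delta$ lies between $A_{n_\Delta-1}$ and $A_{n_\Delta}$'', which is exactly $n_\Delta-1\prec\Delta\prec n_\Delta$.

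The main obstacle is making the monotonicity sublemma rigorous: one must be sure that the ``inside $\mathcal{A}_{out}$'' preimage of a curve nested between $A_n$ and $A_{n+1}$ really lands between $A_{n+1}$ and $A_{n+2}$ and not, say, jumping over $A_{n+2}$. This is where Corollary~\ref{anellsacumulen} and Proposition~\ref{conjblas} do the real work: in the coordinates where $B_{a,\lambda}$ on $\mathcal{A}_{out}$ is conjugate to a genuine Blaschke product $B_{b,t}$ on $\dis$, the curves $A_n$ correspond to nested preimages of a fixed curve surrounding $0$ accumulating on $\cercle$, and the Blaschke dynamics is strictly monotone on the nesting order of such curves (the $3$-to-$1$, $z=0$-surrounding branch of $B_{b,t}^{-1}$ takes the annulus between two consecutive nested curves to the annulus between the next two). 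I would also need to handle the bookkeeping subtlety flagged in Remark~\ref{deforderconflict} so that ``$A_\Delta$ lies between $A_{n_\Delta-1}$ and $A_{n_\Delta}$'' is stated for the genuine $A_n$'s of Corollary~\ref{anellsacumulen} and not some spurious sibling component; Proposition~\ref{preimcorbes}'s uniqueness statement (exactly one preimage in $\mathcal{A}_{out}$ surrounding $z=0$, when $B_{a,\lambda}(c_-)\notin\mathrm{Int}(\gamma)$, and exactly one in any case that surrounds $z=0$) together with Remark~\ref{remconfpinching} guarantees the labelling is unambiguous, so this is a matter of careful phrasing rather than a genuine difficulty.
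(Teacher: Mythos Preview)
Your proposal is correct and follows essentially the same approach as the paper. The paper's proof is more concise: for the first bullet it argues exactly as you do, and for the second it simply observes that $B_{a,\lambda}^{-1}(\mathcal{A}_{in})\cap\mathcal{A}_{out}$ is contained in the annulus between $A_0$ and $A_1$ (since $A_0$ maps to $T_0$ and $A_1$ to $A_0$), which gives $0\prec\Delta_1\prec 1$ when $n_{\Delta_1}=1$, and then invokes ``analogously'' for the general inductive step. Your monotonicity sublemma is this induction spelled out; note however that the paper does not need Proposition~\ref{conjblas} here, since the nesting already follows from the topological fact that $B_{a,\lambda}$ sends the annulus between $A_n$ and $A_{n+1}$ onto the annulus between $A_{n-1}$ and $A_n$.
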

\proof
Let $\Delta=\{i_0,\cdots,i_{p_{\Delta}-1}\}$. If $n_{\Delta}=0$ then, by definition, $A_{\Delta}$ belongs to the annulus $\mathcal{A}_{in}$, which  is surrounded by $A_0$. Hence, we have that $\Delta\prec 0$. Let $\Delta_1=\{0,i_0,\cdots,i_{p_{\Delta}-1}\}$. Then $n_{\Delta_1}=1$, $A_{\Delta_1}\subset \mathcal{A}_{out}$, and $B_{a,\lambda}(A_{\Delta_1})=A_{\Delta}$. Let $A$ be the doubly connected region bounded by bounded by $A_0$ and $A_1$. Then, $B_{a,\lambda}^{-1} (\mathcal{A}_{in})\cap\mathcal{A}_{out}\subset A$. We can conclude that $A_{\Delta_1}\subset A$ and, hence, $0\prec\{0,i_0,\cdots,i_{p_{\Delta}-1}\}\prec 1$. Analogously, we can conclude that for any sequence $\Delta'$ with $n_{\Delta'}>0$ we have $n_{\Delta'}-1\prec\Delta'\prec n_{\Delta'}$. 

\endproof

\section{Escaping dynamics: proof of Theorems A and B}\label{sectionThmAB}

The goal of this section is to study the case where the critical point $c_-$ belongs to $A(\infty)$, to show that all possibilities described in Theorem~\ref{thmavell} can take place, and to prove Theorem~A and Theorem~B. If $c_-$ belongs to $A(\infty)$ then it is eventually mapped onto $D_0$ or $A_0$ (see Lemma~\ref{fatouinfinit}). If it is eventually mapped into $D_0$ then it belongs to a simply connected Fatou component and statement a) of Theorem~\ref{thmavell} holds. On the other hand, if $c_-$ is eventually mapped under iteration into $A_0$ then it belongs to a triply connected Fatou component (see Lemma~\ref{fatouinfinit}), which may surround $z=0$ or not. Theorem~A is a direct corollary of Theorem~\ref{thmb0}, Theorem~\ref{thmcaseA}, and Theorem~\ref{thmcaseB}, which stablish the existence of parameters for which these three cases take place. Theorem~B follows almost directly from Theorem~\ref{thmb0}, which establishes the existence of multiply connected hyperbolic  components of parameters $\lambda$ for which $c_-(a,\lambda)\in A_{\Delta}(a,\lambda)$  for every finite sequence $\Delta$ of $0's$ and $1's$  such that $s\prec \Delta$, where $s$ is a natural number which depends on $a$. The set $A_{\Delta}(a,\lambda)$ denotes the preimage of $A_0(a,\lambda)$  introduced in Lemma~\ref{deforderwell}. Notice that here we consider the order of sequences $\Delta$ as in Definition~\ref{defordering}. Before proving Theorem~\ref{thmb0}, Theorem~\ref{thmcaseA} and Theorem~\ref{thmcaseB} we introduce the number $s$.

Fixed $a\in \dis^*$, we can associate a natural number $r(a,\lambda)$ to each $\lambda\in\dis_{\mathcal{C}(a)}^*$. This number $r(a,\lambda)$ is  given by the minimal $n$ such that $c_-(a,\lambda)\in \rm{Bdd}(A_n(a,\lambda))$, where $\rm{Bdd}(A_n(a,\lambda))$ denotes the open region bounded by $A_n(a,\lambda)$ (including $A_n(a,\lambda)$). Since the $A_n(a,\lambda)$ accumulate on $\partial A_{a,\lambda}^*(\infty)$ (see Corollary~\ref{anellsacumulen}), we know that $r(a,\lambda)$ is finite for all $\lambda\in\dis_{\mathcal{C}(a)}^*$. 

\begin{defi}\label{naturalsaux}
Fixed $a\in\dis^*$ and $\lambda\in\dis_{\mathcal{C}(a)}^*$, we define $r(a,\lambda)$ as the minimal $n$ such that $c_-(a,\lambda)\in \rm{Bdd}(A_n(a,\lambda))$. Fixed $a\in\dis^*$ and given $0<\rho<\mathcal{C}(a)$ we define $s(a,\rho):=\rm{max}\{r(n,\lambda) \; | \; |\lambda|=\rho\}$ and $t(a,\rho):=\rm{min}\{r(a,\lambda)\; | \;  |\lambda|\leq \rho, \; \lambda\neq 0\}$.
\end{defi}

 Notice that $s(a,\rho)$ is well defined and finite since the set of parameters $\lambda$ such that $|\lambda|=\rho$ is closed. On the other hand $t(a,\rho)$ is well defined since we are taking a minimum on the set of natural numbers.

\begin{lemma}\label{convst}
If $\rho\rightarrow 0$, then  $s(a,\rho)\rightarrow \infty$ and  $t(a,\rho)\rightarrow \infty$.
\end{lemma}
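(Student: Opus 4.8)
The plan is to reduce everything to the single claim: \emph{for every $N\in\nat$ there is $\rho_0>0$ with $r(a,\lambda)>N$ whenever $0<|\lambda|\le\rho_0$.} This claim gives both halves of the lemma at once, since then $t(a,\rho)>N$ and $s(a,\rho)=\max_{|\lambda|=\rho}r(a,\lambda)>N$ for all $0<\rho\le\rho_0$. Unravelling the definition of $r(a,\lambda)$, the claim asserts exactly that $c_-(a,\lambda)\notin\bigcup_{n=0}^{N}\mathrm{Bdd}(A_n(a,\lambda))$ once $|\lambda|$ is small enough.

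I would prove this from two facts. First, $c_-(a,\lambda)$ is the analytic continuation of $c_-(a)\in\dis^*$, so $|c_-(a,\lambda)|\to|c_-(a)|>0$ as $\lambda\to0$. Second — and this is the heart of the matter — for each fixed $n$ one has $\sup\{|z|:z\in\mathrm{Bdd}(A_n(a,\lambda))\}\to0$ as $\lambda\to0$. Granting the second fact, for $|\lambda|$ small enough (finitely many conditions, one per $n\le N$) the union $\bigcup_{n=0}^{N}\mathrm{Bdd}(A_n(a,\lambda))$ is contained in $\dis_{|c_-(a)|/2}$, whereas $|c_-(a,\lambda)|>|c_-(a)|/2$; hence $c_-(a,\lambda)$ lies outside this union, which is the claim.

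To prove that $\mathrm{Bdd}(A_n(a,\lambda))$ collapses to $\{0\}$ I would induct on $n$, the base of the induction being that $T_0(a,\lambda)$ itself collapses to $\{0\}$. For the latter, fix a small $\delta\in(0,1)$ and set $M_\delta:=\max_{|z|=\delta}|B_a(z)|$, which is $<1$ because $B_a$ is holomorphic on $\dis$ with $B_a(\overline{\dis_\delta})$ a compact subset of $\dis$; put $\epsilon:=(1-M_\delta)/3$. Since $\partial A^*_{a,0}(\infty)=\cercle$ and $\partial A^*_{a,\lambda}(\infty)$ varies continuously with $\lambda$ (Proposition~\ref{contboundary}), for $|\lambda|$ small $\partial A^*_{a,\lambda}(\infty)\subset\{|z|>1-\epsilon\}$; as $\{|z|\le 1-\epsilon\}$ is connected, misses $\partial A^*_{a,\lambda}(\infty)$, and contains $0\notin A^*_{a,\lambda}(\infty)$, this forces $A^*_{a,\lambda}(\infty)\subset\{|z|>1-\epsilon\}$. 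On the other hand $B_{a,\lambda}-B_a=\lambda/z^2$, so $\max_{|z|=\delta}|B_{a,\lambda}|\le M_\delta+|\lambda|/\delta^2<1-\epsilon$ for $|\lambda|$ small. If $T_0(a,\lambda)\not\subset\dis_\delta$ then, being connected and containing $0$, it meets $\{|z|=\delta\}$ at some $z_0$; but then $B_{a,\lambda}(z_0)\in B_{a,\lambda}(T_0(a,\lambda))=A^*_{a,\lambda}(\infty)$ forces $|B_{a,\lambda}(z_0)|>1-\epsilon$, contradicting the previous bound. Hence $T_0(a,\lambda)\subset\dis_\delta$ for $|\lambda|$ small, and since $\delta$ was arbitrary, $\sup_{T_0(a,\lambda)}|z|\to0$.

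The passage $T_0\rightsquigarrow A_0$ and the inductive step $A_{n-1}\rightsquigarrow A_n$ both rely on a single local estimate near the pole: a direct computation with $B_{a,\lambda}(z)=z^3\frac{z-a}{1-\overline{a}z}+\lambda z^{-2}$ (isolating the dominant term $-az^3+\lambda z^{-2}=-a\,z^{-2}\prod_{j=0}^{4}(z-\zeta_j)$, where $|\zeta_j|=(|\lambda|/|a|)^{1/5}=:\sigma$) shows $|B_{a,\lambda}(z)|\ge c_a|z|^3$ on the annulus $\{2\sigma\le|z|\le\rho_a\}$, for constants $c_a,\rho_a>0$ depending only on $a$. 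Assume $\overline{A_n}\subset\{|B_{a,\lambda}|\le\eta(\lambda)\}$ with $\eta(\lambda)\to0$ — true for $n=0$ with $\eta=\sup_{T_0}|z|$, and propagating with $\eta=\sup_{\mathrm{Bdd}(A_{n-1})}|z|$ since $B_{a,\lambda}$ maps $A_n$ onto $A_{n-1}$. Then the estimate confines $\overline{A_n}\cap\{|z|\le\rho_a\}$ to $\dis_{\max(2\sigma,(\eta/c_a)^{1/3})}$, which shrinks to $0$; and for $|z|>\rho_a$ the perturbation $\lambda z^{-2}$ is uniformly small whereas $B_a$ vanishes there only at $z=a\ne0$, so $\{|B_{a,\lambda}|\le\eta\}\cap\{|z|>\rho_a\}$ is trapped in a small ball about $a$ that does not surround $0$. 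As $\overline{A_n}$ is connected and $A_n$ surrounds $z=0$ (Theorem~\ref{thmcritzeros}(c) and Corollary~\ref{anellsacumulen}), $\overline{A_n}$ must lie in the small disk near $0$; hence so does the region $\mathrm{Bdd}(A_n(a,\lambda))$ it encloses, closing the induction. I expect this to be the main obstacle: controlling, uniformly in $\lambda$, that the preimage component surrounding $z=0$ stays in a small neighbourhood of $z=0$, while the scales $\sigma(\lambda)$, $\sup_{T_0(a,\lambda)}|z|$, and the successive radii produced by the induction all tend to $0$ at genuinely different rates.
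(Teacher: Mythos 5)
Your argument is correct, but it takes a genuinely different route from the paper's. The paper's proof is qualitative and orbit-based: it uses the uniform convergence of $B_{a,\lambda}$ to $B_a$ on compact sets away from $z=0$ to get $c_-(a,\lambda)\to c_-(a)$ and the shrinking of $A_0(a,\lambda)$ to $z=0$ (the latter justified through the Böttcher coordinate of $B_a$ at $0$: the orbit of $c_-(a)$ tends to $0$ without ever landing on it, and no point of the Böttcher domain is mapped onto $0$), and then argues that the forward orbit of $c_-(a,\lambda)$ needs an ever larger number of iterates to enter $\mathrm{Bdd}(A_0(a,\lambda))$, which forces $r(a,\lambda)\to\infty$ uniformly on $|\lambda|\le\rho$. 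You instead work directly with the definition of $r(a,\lambda)$: you fix $N$ and show that the region $\mathrm{Bdd}(A_N(a,\lambda))$ itself collapses onto $z=0$ as $\lambda\to 0$, first collapsing $T_0(a,\lambda)$ (via the continuity of $\partial A^*_{a,\lambda}(\infty)$ from Proposition~\ref{contboundary} and the bound on $|B_{a,\lambda}|$ on a small circle), then propagating the collapse through the finitely many annuli $A_1,\dots,A_N$ with the explicit estimate $|B_{a,\lambda}(z)|\ge c_a|z|^3$ on $\{2\sigma\le|z|\le\rho_a\}$ combined with a connectedness/surrounding argument that excludes the component of the sublevel set near $z=a$. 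This buys a self-contained, quantitative proof that avoids Böttcher coordinates and avoids the translation, left implicit in the paper, between the entry time of the orbit of $c_-$ into $\mathrm{Bdd}(A_0)$ and the index $r(a,\lambda)$; the price is more computation, and two spots deserve a word more of care — the lower bound for $|B_{a,\lambda}|$ on $\{|z|>\rho_a\}$ should also cover neighbourhoods of the pole $1/\overline{a}$ and of $\infty$ (where $|B_{a,\lambda}|$ is in fact large, so no harm), and the containment of $\{|B_{a,\lambda}|\le\eta\}\cap\{|z|>\rho_a\}$ in a small ball about $a$ should be stated uniformly in $\lambda$ — but both are routine compactness remarks, not gaps.
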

\proof
The result follows directly from the fact that, as $\lambda\rightarrow 0$, the maps $B_{a,\lambda}(z)$ converge uniformly to $B_a(z)$ (Equation~(\ref{eqblas})) on compact subsets of $\com\setminus\dis_{\epsilon}$, where $\epsilon$ is arbitrarily small. Hence, as $\lambda\rightarrow 0$ the critical point $c_-(a,\lambda)$ of $B_{a,\lambda}$ converges to the critical point $c_-(a)$ of $B_a$. The critical point $c_-(a)$ belongs to the boundary of the maximal domain of definition of the Böttcher coordinate of the superattracting fixed point $z=0$ of $B_a$ (see e.g.\ \cite{Mi1}). In particular, the orbit of $c_-(a)$ converges to $z=0$ but $B_a^n(c_-(a))\neq 0$ for every natural number $n$. Moreover, no point of the maximal domain of definition of the Böttcher coordinate is eventually mapped under iteration of $B_a$ onto $z=0$. It follows from this and the uniform convergence of $B_{a,\lambda}$  to $B_a$ on compact subsets of $\com\setminus\dis_{\epsilon}$ that the annulus $A_0(a,\lambda)$ shrinks to $z=0$ as $\lambda\rightarrow 0$.
We conclude that, as $\lambda\rightarrow 0$, the critical point $c_-(a,\lambda)$ requires an increasing number of iterates to be mapped into the region $\rm{Bdd}(A_0(a,\lambda))$. Consequently, for all $\epsilon>0$ there is a natural number $n(\epsilon)$ such that if $|\lambda|\leq\epsilon$ then $r(a,\epsilon)\geq n(\epsilon)$. Moreover, $n(\epsilon)\rightarrow \infty$ as $\epsilon\rightarrow 0$. This implies that $s(a,\rho)$ and $t(a,\rho)$  converge to $\infty$ as $\rho\rightarrow 0$.
\endproof

We can now start the proof of Theorem B. To do so, we to study the parameters for which the critical point $c_-(a,\lambda)$ belongs to a preimage of $A_0(a,\lambda)$ surrounding $z=0$. It follows from Theorem~\ref{thmavell} that these parameters are precisely the ones for which the singular perturbations $B_{a,\lambda}$ have Fatou components of arbitrarily large finite connectivity.
Let $A_{\Delta}(a,\lambda)$ denote the preimage of $A_0(a,\lambda)$ surrounding $z=0$ with dynamics described by  the sequence $\Delta=\{i_0,\cdots,i_{p_{\Delta}-1}\}$ as introduced in Lemma~\ref{deforderwell}. We consider order of sequences $\Delta$ as in Definition~\ref{defordering}. By definition of $t(a,\rho)$,  there can be no parameter $\lambda$ with $|\lambda|\leq\rho$ such that $c_-(a,\lambda)\in A_{\Delta}(a,\lambda)$  if $\Delta\prec t(a,\rho)$. In the following theorem  we prove that if $s(a,\rho)\prec \Delta$ then there is a multiply connected hyperbolic component $\Omega_{\Delta}$ of parameters such that $c_-(a,\lambda)\in A_{\Delta}(a,\lambda)$. Moreover, $\Omega_{\Delta}$ is contained in the disk of parameters $\dis_{\rho}$.

\begin{thm}\label{thmb0}
Let $0<\rho<\mathcal{C}(a)$. Then, for every sequence $\Delta$ such that $s(a,\rho)\prec \Delta$, there is a multiply connected hyperbolic component $\Omega_{\Delta}\subset\dis_{\rho}^*$ which surrounds the parameter $\lambda=0$ and such that if $\lambda\in\Omega_{\Delta}$ then $c_-(a,\lambda)\in A_{\Delta}(a,\lambda)$.
\end{thm}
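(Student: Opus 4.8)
The plan is to argue by a continuity/covering argument in parameter space, exploiting the holomorphic motion of the relevant curves established in Section~\ref{annulardyn}. First I would fix a sequence $\Delta$ with $s(a,\rho)\prec\Delta$ and consider the (open) set of parameters $\lambda\in\dis_\rho^*$ for which $c_-(a,\lambda)$ lies in $A(\infty)$ and is eventually mapped onto $A_0(a,\lambda)$; restricting to the subset where the relevant Fatou component is precisely $A_\Delta(a,\lambda)$ gives a candidate for $\Omega_\Delta$. Concretely, one should define the holomorphic function $\lambda\mapsto\Phi_\lambda(c_-(a,\lambda))$, where $\Phi_\lambda$ is (a branch of) the coordinate on $A_0(a,\lambda)$ pulled back through the chain of preimages prescribed by $\Delta$ — using Corollary~\ref{continuitatanells} and the holomorphic motion of $\partial A^*_{a,\lambda}(\infty)$ from Proposition~\ref{contboundary} to see that everything moves holomorphically with $\lambda$ so long as $c_-(a,\lambda)$ stays inside the appropriate annular preimage. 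The condition $c_-(a,\lambda)\in A_\Delta(a,\lambda)$ then becomes an open condition cutting out $\Omega_\Delta$, and on $\Omega_\Delta$ the map $B_{a,\lambda}$ is hyperbolic (all critical orbits land in the basin of $\infty$), so $\Omega_\Delta$ is automatically a union of hyperbolic components.

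The heart of the argument is to show $\Omega_\Delta$ is nonempty, surrounds $\lambda=0$, and is \emph{multiply} connected. For the "surrounds $0$" part I would show that as $\lambda$ winds once around $0$ along a circle $|\lambda|=\rho'$ with $\rho'\le\rho$ small, the critical point $c_-(a,\lambda)\sim c_-(a)\ne 0$ stays essentially fixed, while the whole nested family of annuli $A_n(a,\lambda)$, and more generally the preimages $A_\Delta(a,\lambda)$, shrink toward $0$ and rotate: by Proposition~\ref{zeroscrit} and Lemma~\ref{realannulus} the annulus $A_0(a,\lambda)$ lives near radius $|\lambda/a|^{1/5}$, so as $\arg\lambda$ increases by $2\pi$ the configuration of preimages turns by a definite fractional amount and $c_-(a,\lambda)$ must sweep across $A_\Delta(a,\lambda)$. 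Said differently: consider the winding number of the loop $\lambda\mapsto B_{a,\lambda}^{p_\Delta}(c_-(a,\lambda))$ relative to $\partial A_0(a,\lambda)$ (or track $\Phi_\lambda(c_-(a,\lambda))$ around the appropriate annulus); a nonzero winding number forces the loop to cross $A_\Delta$, and together with the nesting $s(a,\rho)\prec\Delta$ guaranteeing $c_-(a,\lambda)\notin\mathrm{Bdd}(A_\Delta(a,\lambda))$ somewhere on $|\lambda|=\rho$, a standard argument (the connected component of the relevant open set separating an inner circle from an outer one) produces a component of $\Omega_\Delta$ that genuinely encircles $\lambda=0$. Finally, multiple connectivity follows because $\Omega_\Delta$ cannot contain $\lambda=0$ (there $c_-(a,0)=c_-(a)$ is not in any such preimage) yet it surrounds it, so its complement has at least two components.

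The main obstacle I expect is the bookkeeping needed to make the holomorphic-motion coordinate $\Phi_\lambda$ genuinely well-defined and single-valued along the entire parameter loop, i.e.\ controlling the chain of $p_\Delta$ preimages simultaneously for all $\lambda$ with $|\lambda|\le\rho$. The subtlety is exactly the transition described in Proposition~\ref{confpinching} and Remark~\ref{remconfpinching}: when $c_-(a,\lambda)$ (or an iterated image) crosses the boundary of one of these annular preimages, the combinatorics of $B_{a,\lambda}^{-1}(\gamma)$ changes (Proposition~\ref{preimcorbes}), and one must check that $A_\Delta(a,\lambda)$, as a set surrounding $z=0$, nonetheless persists continuously and that no monodromy obstructs closing up $\Phi_\lambda$ around $|\lambda|=\rho'$. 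I would handle this by working with an annular neighborhood of $A_\Delta$ bounded away from $c_-(a,\lambda)$ for $\lambda$ on the parameter circle (possible since, on $|\lambda|=\rho$, $r(a,\lambda)\le s(a,\rho)\prec\Delta$ forces $c_-(a,\lambda)$ to lie strictly inside a smaller preimage), pulling back an analytic Jordan curve $\gamma\subset A_0$ along the itinerary $\Delta$ as in the proof of Proposition~\ref{contboundary}, and invoking Corollary~\ref{continuitatanells} plus $\mathcal{J}$-stability of the polynomial-like family to get a genuine holomorphic motion of that curve over the whole disk $\dis_\rho$ minus the locus where $c_-$ enters $\mathrm{Int}$ of the relevant curve — which is precisely the locus we want to be $\Omega_\Delta$.
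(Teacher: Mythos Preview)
Your proposal contains the right endgame --- a component of $\Omega_\Delta$ separating an inner region of parameter space from an outer one, hence multiply connected and encircling $\lambda=0$ --- but the mechanism you propose to produce that separation is the wrong one, and the correct mechanism is missing.

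The winding/monodromy idea is a red herring. The sets $A_\Delta(a,\lambda)$ are multiply connected domains \emph{surrounding} $z=0$, so ``rotation'' of the configuration as $\arg\lambda$ varies does not make $c_-(a,\lambda)$ cross $A_\Delta(a,\lambda)$: a point is either inside the curve $\gamma_\lambda^\Delta\subset A_\Delta$ or outside it, and that does not change under rotation. On a fixed small circle $|\lambda|=\rho'$ you will in fact find that $c_-$ is on the \emph{same} side of $\gamma_\lambda^\Delta$ for every $\lambda$, so no crossing is forced. You also have the direction reversed at one point: $s(a,\rho)\prec\Delta$ gives $c_-(a,\lambda)\in\mathrm{Bdd}(A_\Delta(a,\lambda))$ for \emph{every} $\lambda$ with $|\lambda|=\rho$, not $\notin$ somewhere (you get this right later, so perhaps a slip).

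What actually drives the paper's proof is a purely radial comparison. On $|\lambda|=\rho$ the hypothesis $s(a,\rho)\prec\Delta$ puts $c_-$ in the bounded component of $\com\setminus\gamma_\lambda^\Delta$. The other side comes from Lemma~\ref{convst}: since $t(a,\epsilon)\to\infty$ as $\epsilon\to0$, one can choose $\epsilon>0$ with $\Delta\prec t(a,\epsilon)$, and then for every $|\lambda|\le\epsilon$ the critical point $c_-$ lies in the \emph{unbounded} component of $\com\setminus\gamma_\lambda^\Delta$. Continuous dependence of $B_{a,\lambda}^{-p_\Delta}(\gamma_\lambda)$ on $\lambda$ (Corollary~\ref{continuitatanells}) then forces any path in parameter space from $|\lambda|=\rho$ to $|\lambda|\le\epsilon$ to hit a parameter with $c_-\in\gamma_\lambda^\Delta\subset A_\Delta$. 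That is the whole argument; no holomorphic coordinate $\Phi_\lambda$, no winding number, and no $\mathcal{J}$-stability beyond what is already packaged in Corollary~\ref{continuitatanells} is needed. Your concern about pinching (Proposition~\ref{confpinching}) is legitimate but is handled in the paper simply by noting that even at pinching parameters the relation ``$c_-$ and $0$ lie in the same component of $\com\setminus\gamma_\lambda^\Delta$'' is stable, so the in/out dichotomy persists continuously.
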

\proof

Fix $\rho<\mathcal{C}(a)$ and $\lambda\in\com$ with $|\lambda|=\rho$. Let $\Delta$ be a finite sequence of 0's and 1's such that $s(a,\rho)\prec \Delta$. By definition of $s(a,\rho)$ we have that $c_-(a,\lambda)\in\rm{Bdd}(A_{s(a,\rho)}(a,\lambda))$. Moreover,  by definition of the ordering of the sequences $\Delta$ (see Definition~\ref{defordering}) we have that the multiply connected domain $A_{\Delta}(a,\lambda)$ is not contained in $\rm{Bdd}(A_{s(a,\rho)}(a,\lambda))$. Indeed $A_{\Delta}(a,\lambda)$ surrounds $\rm{Bdd}(A_{s(a,\rho)}(a,\lambda))$. The idea of the proof is the following.  We decrease continuously $\lambda$. Then, the set $A_{\Delta}(a,\lambda)$ shrinks with $\lambda$, obtaining parameters for which $c_-(a,\lambda)\notin \rm{Bdd}(A_{\Delta} (a,\lambda))$. Finally, by continuity there is some parameter $\lambda$ for which $c_-(a,\lambda)\in A_{\Delta}(a,\lambda)$.

Let $p_{\Delta}$ be such that $B^{p_{\Delta}}_{a,\lambda}(A_{\Delta}(a,\lambda))=A_0(a,\lambda)$. The arguments that we use are based on the fact  $A_{\Delta}(a,\lambda)$ moves continuously with respect to $\lambda$ except at the parameters for which there is an $n\leq p_{\Delta}$ such that $c_-(a,\lambda)\in\partial B_{a,\lambda}^n(A_{\Delta}(a,\lambda))$ (see Corollary~\ref{continuitatanells}, Proposition~\ref{confpinching} and Remark~\ref{remconfpinching}). At these parameters, either one of the connected components of the boundary of $A_{\Delta}(a,\lambda)$ is pinched or there are extra connected components of $B_{a,\lambda}^{-p_{\Delta}}(A_0(a,\lambda))$ which share boundary points with $A_{\Delta}(a,\lambda)$ (see Proposition~\ref{confpinching} and Figure~\ref{esquemapunxat}). To simplify the arguments it will be convenient to work with a curve surrounding $z=0$ contained in $A_{\Delta}(a,\lambda)$. The annulus $A_0(a,\lambda)$ moves continuously with respect to $\lambda$ by Corollary~\ref{continuitatanells}. Hence, we can take a family of Jordan curves $\gamma_{\lambda}\subset A_0(a,\lambda)$ which surrounds $z=0$ and depends continuously on $\lambda$.
Indeed, it follows from Lemma~\ref{realannulus}  that we can take $\gamma_{\lambda}$ to be the circle centred at $z=0$ with radius $\left(\frac{|\lambda|}{|a|}\right)^{1/5}$. Let $\gamma_{\lambda}^{\Delta}$ be the unique connected component of $B_{a,\lambda}^{-p_{\Delta}}(\gamma_{\lambda})$ which surrounds $z=0$ and is contained in $A_{\Delta}(a,\lambda)$ (see Proposition~\ref{preimcorbes}). Then, $\gamma_{\lambda}^{\Delta}$ moves continuously with respect to $\lambda$ except at the parameters $\lambda_n$  such that there is an $n$, $0\leq n<p_{\Delta}$, for which $c_-(a,\lambda_n)\in B_{a,\lambda_n}^n(\gamma_{\lambda_n}^{\Delta})$.  However, this is not a problem for the proof. 
Indeed, the set $B_{a,\lambda}^{-p_{\Delta}}(\gamma_{\lambda})$ moves continuously with respect to $\lambda$ for all $\lambda\in\dis_{\mathcal{C}(a)}^*$. Moreover, if the points $z=0$ and $c_-(a,\lambda_n)$ belong to the same connected component of $\com\setminus \gamma_{\lambda_n}^{\Delta}$, then they belong to the same connected component of $\com\setminus \gamma_{\lambda}^{\Delta}$ for all $\lambda$ in a neighbourhood of $\lambda_n$. In the remaining part of the proof we modify continuously $\lambda$ to find parameters $\lambda_{\Delta}$ for which  $c_-(a,\lambda_{\Delta})\in\gamma_{\lambda_{\Delta}}^{\Delta}$. 

If $|\lambda|=\rho$ and $s(a,\rho)\prec \Delta$ then, by definition of $s(a,\rho)$, we have  that $c_-(a,\lambda)$ belongs to the connected component of $\com\setminus \gamma_{\lambda}^{\Delta}$ which contains $z=0$. On the other hand, we have by Lemma~\ref{convst} that there is an $\epsilon>0$ such that  $\Delta \prec t(a,\epsilon)$ and, therefore, if $|\lambda|\leq \epsilon$ then $c_-(a,\lambda)$ belongs to the unbounded component of $\com\setminus \gamma_{\lambda}^{\Delta}$. The set $B_{a,\lambda}^{-p_{\Delta}}(\gamma_{\lambda})$ depends continuously on $\lambda$ and contains the curve $\gamma_{\lambda}^{\Delta}$. Moreover, fixed $\lambda_0$ with $|\lambda_0|<\mathcal{C}(a)$, we know that if a connected component of $\com\setminus \gamma_{\lambda_0}^{\Delta}$ contains both $z=0$ and $c_-(a,\lambda_0)$, then for all $\lambda$ in a neighbourhood of $\lambda_0$ we have that a connected component of $\com\setminus \gamma_{\lambda}^{\Delta}$ contains both $z=0$ and $c_-(a,\lambda)$. Since $c_-(a,\lambda)$ depends continuously on $\lambda$, if we take any continuous path $\eta$ in the parameter plane joining a parameter $\lambda_{\rho}$ with $|\lambda_{\rho}|=\rho$ and a parameter $\lambda_{\epsilon}$ with $|\lambda_{\epsilon}|\leq\epsilon$, then there is a parameter $\lambda_{\Delta}\in\eta$ such that $c_-(a,\lambda_{\Delta})\in\gamma_{\lambda_{\Delta}}^{\Delta}$. Let $\Lambda_{\Delta}$ be the set of parameters $\lambda$ such that $c_-(a,\lambda)\in\gamma_{\lambda}^{\Delta}$. Then $\Lambda_{\Delta}$ is closed (its complement is open) and separates the set of parameters $\lambda$ with $|\lambda|=\rho$ and the set of parameters $\lambda$ with $|\lambda|\leq \epsilon$. We can conclude that the complement of $\Lambda_{\Delta}$ in the parameter plane is disconnected. Finally, since $\Lambda_{\Delta}$ is a subset of the set of parameters $\Omega'_{\Delta}$ for which $c_-(a,\lambda)\in A_{\Delta}(a,\lambda)$, we conclude that there is (at least) a connected component $\Omega_{\Delta}$ of $\Omega'_{\Delta}$ which is multiply connected, is contained in $\dis_{\rho}^*$, and separates $\lambda=0$ from the set $\lambda$ with $|\lambda|= \rho$. This finishes the proof of the theorem.

\endproof

Now we can prove Theorem~B. It is a direct corollary of Lemma~\ref{convst} and Theorem~\ref{thmb0}.

\begin{proof}[Proof of Theorem B]

 Fix $0<\rho<\mathcal{C}(a)$. Then, it follows from Theorem~\ref{thmb0} that, for every finite sequence $\Delta$ of 0's and 1's  as in Lemma~\ref{deforderwell} such that $s(a,\rho)\prec \Delta$, there is a hyperbolic Fatou component $\Omega_{\Delta}$ which surrounds the parameter $\lambda=0$, is contained in the disk of parameters $\lambda$ with $|\lambda|<\rho$, and such that for all $\lambda\in\Omega_{\Delta}$ we have $c_-(a,\lambda)\in A_{\Delta}(a,\lambda)$.  These hyperbolic Fatou components accumulate on $\lambda=0$ since, by Lemma~\ref{convst}, as we decrease $\rho$ to $0$ the natural $s(a,\rho)$ tends to $\infty$.
 \end{proof}

We continue with the proof of the existence of parameters for which statement a) of Theorem~\ref{thmavell} holds.

\begin{thm}\label{thmcaseA}
Let $a\in(0,1)$. Then there exists $\lambda \in \real^+$, $\lambda<\mathcal{C}(a)$, such that $c_-\in A(\infty)$ and $B_{a,\lambda}$ has only Fatou components of connectivity 1 and 2.
\end{thm}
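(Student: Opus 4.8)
The plan is to reduce the statement to finding a real parameter $\lambda\in(0,\mathcal{C}(a))$ for which the orbit of $c_-$ is eventually captured by the disk $D_0$ instead of the annulus $A_0$. Indeed, by the first bullet of Lemma~\ref{fatouinfinit} together with Theorem~\ref{thmavell}(a), it is enough to produce $\lambda$ and $n\in\nat$ with $B_{a,\lambda}^n(c_-(a,\lambda))\in D_0(a,\lambda)$: then $c_-$ lies in a connected component of $B_{a,\lambda}^{-n}(D_0)$, which is simply connected since $c_-$ is the only free critical point (cf.\ the proof of Lemma~\ref{fatouinfinit}), and $c_-\in A(\infty)$ because $D_0$ is eventually mapped onto $A^*(\infty)$.

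First I would exploit the reality of the problem: for $a\in(0,1)$ and $\lambda\in\real^+$ the map $B_{a,\lambda}$ has real coefficients, so complex conjugation is a symmetry, $\real\cup\{\infty\}$ is forward invariant, and the analytic continuations $c_-(a,\lambda)$ and $z_0(a,\lambda)$ of the real points $c_-(a)\in(0,1)$ and $z_0(a)=a$ remain real and positive for small $\lambda$. The sets $T_0$, $A_0$, $D_0$ are symmetric about $\real$; by Lemma~\ref{realannulus}, $A_0$ shrinks to $0$, while $T_0$ (a topological disk around the pole $0$) and $D_0$ (a topological disk around $z_0$) have diameter $O(|\lambda|^{1/2})$, so $D_0\cap\real$ is a genuine interval around $z_0$. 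As $\lambda\to 0$, $B_{a,\lambda}\to B_a$ uniformly on compact subsets of $\com\setminus\dis_\epsilon$ (as in the proof of Lemma~\ref{convst}), so the real orbit $y_i(\lambda):=B_{a,\lambda}^i(c_-(a,\lambda))$ shadows the real sequence $x_i:=B_a^i(c_-(a))$; the latter stays in $\dis$, converges to $0$, and (by the fact recalled in the proof of Lemma~\ref{convst}) never meets $0$, hence never meets $a=z_0(a)$ either. Consequently, for $\lambda$ small the orbit of $c_-$ reaches the scale of $A_0$ only after an arbitrarily large number $N(\lambda)$ of iterates, all lying in $\mathcal{A}_{out}$, with $N(\lambda)\to\infty$ as $\lambda\to 0$.

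The heart of the argument is then an intermediate--value argument along a suitably chosen interval of real $\lambda$. Once the shadowed orbit comes near $0$, a single further iterate carries it --- to leading order by the pole term $z\mapsto\lambda/z^2$, whose square keeps the image on $\real^+$ --- back out to scale $\asymp 1$ on the positive real axis. I would show that, tuning $\lambda$ within an interval on which $N(\lambda)$ is constant (roughly $|\lambda|\asymp|x_{N-1}|^{6}$), this reinjected point $y_{N+1}(\lambda)$ varies continuously and sweeps across a range of $\real^+$ that contains $z_0(a,\lambda)$, while the earlier iterates $y_0,\dots,y_N$ stay clear of $A_0$ and of its iterated preimages. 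Since $y_{N+1}(\lambda)$ and $z_0(a,\lambda)$ depend continuously on $\lambda$ away from the discrete set of parameters at which $c_-$ hits a Fatou--component boundary (Corollary~\ref{continuitatanells}, Proposition~\ref{confpinching}, Remark~\ref{remconfpinching}), and $D_0\cap\real$ is an interval around $z_0$, the intermediate value theorem produces a real $\lambda\in(0,\mathcal{C}(a))$ with $y_{N+1}(\lambda)\in D_0(a,\lambda)$, as required.

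The step I expect to be the main obstacle is making the shadowing near $z=0$ quantitative: the accumulated error in approximating $y_i$ by $x_i$ is only comparable in size to $|x_i|$, so one must argue at the level of which of $T_0,\mathcal{A}_{in},A_0,\mathcal{A}_{out},D_0$ (and their preimages) each iterate falls into, and in particular separate the desired landing in $D_0$ from the two competing behaviours --- landing in $A_0$, or landing inside $T_0$ and thus escaping via an iterated preimage of $A_0$. Controlling this transition, and checking that as $\lambda$ varies the reinjected orbit genuinely crosses $D_0$ rather than jumping over it, is where the real work lies; the intermediate--value bookkeeping, modelled on the proof of Theorem~\ref{thmb0}, is then routine.
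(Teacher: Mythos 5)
Your reduction (land the orbit of $c_-$ in an iterated preimage of $D_0$, then invoke Lemma~\ref{fatouinfinit} and Theorem~\ref{thmavell}(a)) and your overall framework --- restrict to $a\in(0,1)$, $\lambda\in\real^+$, exploit the real symmetry, and conclude by an intermediate-value argument in the single real parameter $\lambda$ --- agree with the paper. The genuine gap is the central sweeping claim: that on an interval of parameters where the escape time $N(\lambda)$ is constant, the reinjected point $y_{N+1}(\lambda)$ crosses $D_0\cap\real$ while the earlier iterates are kept under control. You explicitly defer this step, and it is not routine: as you yourself note, the accumulated shadowing error is of the same order as $|x_i|$, the window $|\lambda|\asymp|x_{N-1}|^{6}$ is exactly the regime in which $N(\lambda)$ can jump and in which the competing captures (into $A_0$, or into $\mathcal{A}_{in}$ and $T_0$) occur, and nothing in the proposal actually produces the two bracketing parameters (one with the relevant iterate below $z_0$, one above) that the IVT needs. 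As written, the proof is therefore incomplete precisely at its key step, and carrying it out as stated would require genuine distortion estimates near the pole that are nowhere sketched.

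The paper's proof supplies exactly the mechanism you are missing and avoids all quantitative shadowing. On the real line it uses: (i) the repelling fixed point $x_1(a,\lambda)\in(z_0,z_{\infty})$, the continuation of the fixed point $z=1$ of $B_a$; (ii) the real backward orbit $z_{-k}(a,\lambda)$ of the zero $z_0(a,\lambda)$, which accumulates monotonically on $x_1(a,\lambda)$ and depends continuously on $\lambda$ --- each $z_{-k}$ lies in a simply connected iterated preimage of $D_0$, so it suffices to hit some $z_{-k}$ exactly rather than to sweep across $D_0$ itself; (iii) the identity $B_{a,\lambda}(\partial T_0\cap\real)=x_1(a,\lambda)$, which forces the real points of $\partial A_n$ to land on $x_1$ after $n+2$ iterates; and (iv) the crossing argument of Theorem~\ref{thmb0}: decreasing $\lambda$ until $c_-$ touches $\partial A_n$ yields an interval $[\Lambda_1,\Lambda_2]$ with $B_{a,\Lambda_2}^{n+2}(c_-)=x_1(a,\Lambda_2)$ and $B_{a,\Lambda_1}^{n+2}(c_-)<x_1(a,\Lambda_1)$. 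Since the $z_{-k}$ accumulate on $x_1$ from below, the intermediate value theorem then gives some $\lambda\in(\Lambda_1,\Lambda_2)$ and $k$ with $B_{a,\lambda}^{n+2}(c_-)=z_{-k}(a,\lambda)$, hence $c_-$ lies in a simply connected preimage of $D_0$ and $c_-\in A(\infty)$. To repair your argument, either replace the sweep across $D_0$ by this anchoring at the repelling fixed point, or actually carry out the quantitative control near $z=0$ that your sketch postpones.
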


\proof

It is enough to prove that, given $a\in(0,1)$, there exists $\lambda \in \real^+$ with $\lambda<\mathcal{C}(a)$ such that $B_{a,\lambda}^n(c_-)\in D_0$ for some $n>1$. Then, it follows from the Riemann-Hurwitz formula (see Theorem~\ref{riemannhurwitz}) that $c_-(a,\lambda)$ belongs to a simply connected preimage of $D_0$ and statement a) of Theorem~\ref{thmavell} holds.

The proof of the result is similar to the one of Theorem~\ref{thmb0}. However, in this case we  restrict to real parameters to ensure that $c_-$ is eventually mapped into $D_0$. We begin the proof of the result analysing the real dynamics of the maps $B_{a,\lambda}$ for $a\in(0,1)$ and $\lambda\in \real^+$, $\lambda<\mathcal{C}(a)$. We focus on the dynamics within the interval $(-\infty,z_{\infty})$, where $z_{\infty}=1/\overline{a}$ denotes the only pole of $B_{a,\lambda}$ other than $z=0$. We have that

$$\lim_{z\rightarrow -\infty}B_{a,\lambda}(z)=\lim_{z\rightarrow 0^-}B_{a,\lambda}(z)=\lim_{z\rightarrow 0^+}B_{a,\lambda}(z)=\lim_{z\rightarrow (z_{\infty})^{-}}B_{a,\lambda}(z)=+\infty. $$

We know from Proposition~\ref{zeroscrit} that $B_{a,\lambda}(z)$ has  5 preimages of $z=0$ of the form $\xi(\lambda/a)^{1/5}+o(\lambda^{1/5})$ and  5 critical points of $B_{a,\lambda}(z)$ of the form $-\xi(2\lambda/3a)^{1/5}+o(\lambda^{1/5})$, where $\xi$ denotes a fifth root of the unity and $o(\lambda^{1/5})$ is such that $\lim_{\lambda\rightarrow 0} |o(\lambda^{1/5})|/|\lambda^{1/5}|=0$. Due to the symmetry with respect to the real line we know that a zero $z_5=(\lambda/a)^{1/5}+o(\lambda^{1/5})\in(0,z_{\infty})$ and a critical point $c_5=-\xi(2\lambda/3a)^{1/5}+o(\lambda^{1/5})\in (-\infty,0)$ belong to the real line while the other 4 zeros and critical points which appear near $z=0$ are in $\com\setminus\real$ for $\lambda$ small. We can conclude that the map is decreasing in the intervals $(-\infty, c_5)$ and $(0,c_-)$ and increasing in the intervals $(c_5, 0)$ and $(c_-,z_{\infty})$ (see Figure~\ref{esquemareal}). There is no zero in the interval $(-\infty,0)$, while $z_5\in(0,c_-)$ and $z_0\in(c_-,z_{\infty})$. Moreover, there is a  fixed point $x_1(a,\lambda)\in(z_0,z_{\infty})$ which comes from analytic continuation of the repelling fixed point $x_1(a)=1$ of the unperturbed Blaschke products $B_a$. It follows from the uniform convergence of $B_{a,\lambda}$ to $B_a$ when $\lambda\rightarrow 0$ on compact subsets of $\com\setminus\dis_{\epsilon}$ that, for $\lambda$ small enough, the point $x_1(a,\lambda)$ is repelling and is the only fixed point in the segment $(c_-,z_\infty)$ since this is also true for the Blaschke products $B_a$. Moreover, every point in $(c_-,x_1(a,\lambda))$ converges monotonously onto $x_1(a,\lambda)$ under backwards iteration of $B_{a,\lambda}$. Even if it is not necessary to complete the proof of this result, we want to point out that the previous description holds for all $\lambda$ positive such that $\lambda<\mathcal{C}(a)$. Indeed,  in this case it is possible to adapt the construction of Proposition~\ref{conjblas} so that the conjugation $\Phi\circ\phi$ with a Blaschke products $B_{b,t}$ obtained in the construction preserves the real line.

\begin{figure}[hbt!]
\centering
\def\svgwidth{280pt}
\subimport{figures/}{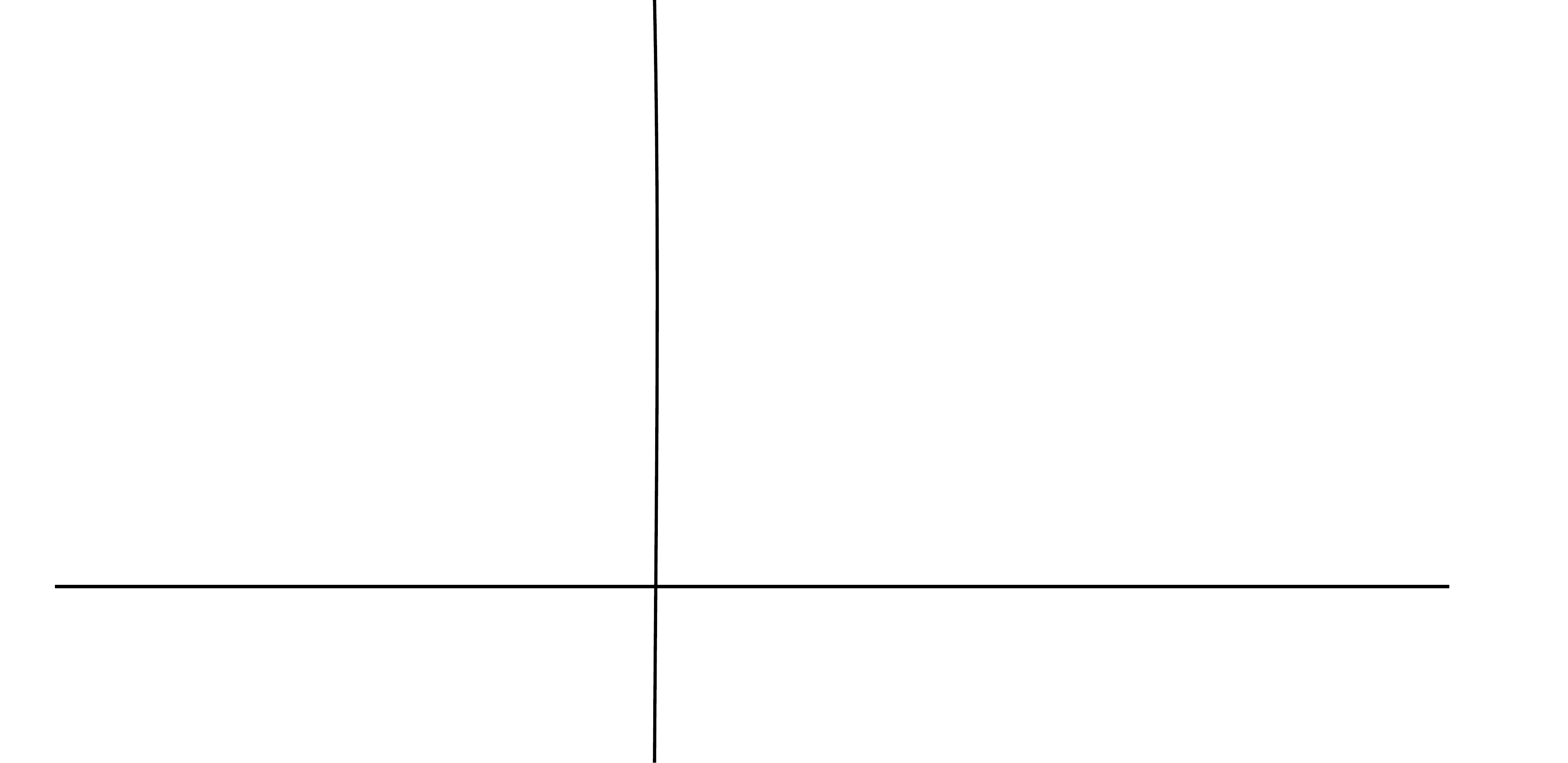_tex}
\caption{\small Summary of the real dynamics described in Theorem~\ref{thmcaseA}. }
\label{esquemareal}
\end{figure}

Now consider the zero $z_0(a,\lambda)$ of $B_{a,\lambda}$. Then, it has a sequence of preimages $z_{-n}(a,\lambda)$ such that $B_{a,\lambda}^n(z_{-n}(a,\lambda))=z_0(a,\lambda)$ and the $z_{-n}(a,\lambda)$ accumulate monotonically on $x_1(a,\lambda)$. Moreover, as $\lambda\rightarrow 0$ the zero $z_0(a,\lambda)$ depends continuously on $\lambda$ and converges to the zero $z_0(a)$ of $B_a$. Analogously,  its preimages $z_{-n}(a,\lambda)$ depend constinuously on $\lambda$ and converge uniformly onto preimages $z_{-n}(a)$ of $z_0(a)$ under $B_a$, which accumulate monotonically on the repelling fixed point $x_1(a)$. This allows us to control the sequence $\{z_{-n}(a,\lambda)\}_{n\in\nat}$ of preimages of $z_{0}(a,\lambda)$ as we move continuously $\lambda$ to 0.

To finish the prove we will find an $m\in\nat$ and an interval of parameters $[\Lambda_1,\Lambda_2]\subset\real^+$ such that if $\lambda=\Lambda_2$ then $B_{a,\lambda}^{m+2}(c_-(a,\lambda))=x_{1}(a,\lambda)$ and if $\lambda=\Lambda_1$ then $B_{a,\lambda}^{m+2}(c_-(a,\lambda))<x_1(a,\lambda)$. It would then follow from the fact that the points $z_{-n}(a,\lambda)$ accumulate on $x_1(a,\lambda)$ and depend continuously on $\lambda$ that for some parameters $\lambda\in(\Lambda_1,\Lambda_2)$  (indeed, infinitely many) the point $c_-(a,\lambda)$ is mapped under $B_{a,\lambda}^{m+2}$ onto a preimage $z_{-n}(a,\lambda)$ of $z_0$.  For these parameters the critical point $c_-(a,\lambda)$ belongs to a simply connected Fatou component which is a preimage of $D_0$ and we are done.

Before proving this we comment very briefly the dynamics of the boundary of $A_0$ and its preimages. It follows from the real dynamics described previously (see Figure~\ref{esquemareal}) that $B_{a,\lambda}(\partial T_0\cap\real) =x_1(a,\lambda)$. In particular, $\partial A_0\cap\real$ and all its preimages are also eventually mapped onto $x_1(a,\lambda)$ and move continuously with respect to $\lambda$.

Finally, let $\lambda_0\in\real^+$, $\lambda_0<\mathcal{C}(a)$. Then, there exists an $n\in\nat$ such that $B_{a,\lambda_0}^n(c_-(a,\lambda_0))\notin A_0(a,\lambda_0)$ and for some $\lambda_n<\lambda_0$ we have $B_{a,\lambda_n}^{n}(c_-(a,\lambda_n))\in A_0(a,\lambda_n)$. Indeed, it is enough to consider an $n$ such that the set $A_n(a,\lambda_0)$, as introduced in Corollary~\ref{anellsacumulen}, surrounds $c_-(a,\lambda_0)$. Then we can decrease $\lambda$ continuously until we find a parameter $\lambda_n$ such that $c_-(a,\lambda_n)\in A_n(a,\lambda_n)$ as in the proof of Theorem~\ref{thmb0}. In particular, we can find an interval of parameters $[\Lambda_1,\Lambda_2]$ such that if $\lambda=\Lambda_2$ then $c_-(a,\lambda)\in\partial A_n(a,\lambda)$ and if $\lambda\in[\Lambda_1,\Lambda_2)$ then $c_-(a,\lambda)\notin \overline{ A_n(a,\lambda)}$ but is arbitrarily close to $A_n(a,\lambda)$. In particular, $B_{a,\Lambda_1}^{n+2}(c_-(a,\Lambda_1))<x_1(a,\Lambda_1)$ and $B_{a,\Lambda_2}^{n+2}(c_-(a,\Lambda_2))=x_1(a,\Lambda_2)$. This completes the proof of the theorem.

\endproof

We finish the proof of Theorem~A proving the existence of parameters for which statement b) of Theorem~\ref{thmavell} holds.

\begin{thm}\label{thmcaseB}
Let $a\in\dis^*$ fixed and $\lambda_0\in\dis_{\mathcal{C}(a)}^*$, such that $c_-(a,\lambda_0)$ belongs to an iterated preimage of $D_0(a,\lambda_0)$. Then the parameter $\lambda_0$ is surrounded, within  $\dis_{\mathcal{C}(a)}$, by hyperbolic components of parameters $\lambda$ for which $c_-(a,\lambda)$ belongs to an iterated preimage of $A_0(a,\lambda)$ which does not surround $z=0$. Moreover, if $\lambda_0$ is surrounded by a hyperbolic component $\Omega\subset \dis_{\mathcal{C}(a)}$, then these hyperbolic components are multiply connected.
\end{thm}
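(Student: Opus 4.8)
The plan is to adapt the strategy of Theorem~\ref{thmb0}. Note first that ``$c_-(a,\lambda)$ belongs to an iterated preimage of $D_0(a,\lambda)$'' is an open condition on $\lambda$, so $\lambda_0$ lies in a hyperbolic component $\Omega\subset\dis_{\mathcal{C}(a)}^*$, throughout which $c_-(a,\lambda)$ stays in one and the same iterated preimage $V(a,\lambda)$ of $D_0(a,\lambda)$; fix $n\geq 1$ with $B_{a,\lambda}^n(V)=D_0$. By Lemma~\ref{fatouinfinit}, $V$ is simply connected and does not surround $z=0$. The dichotomy that drives the proof is that $B_{a,\lambda}^{-1}(T_0)=A_0\cup D_0$, where $D_0$ is the simply connected component, surrounding the zero $z_0$ but not $z=0$, while $A_0$ is the annulus surrounding $z=0$; correspondingly, the non-surrounding preimages of $A_0$ shadow the (non-surrounding) preimages of $D_0$.

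Concretely, using that $c_-\notin A_0\cup D_0=B_{a,\lambda}^{-1}(T_0)$ and that $c_-$ is in the Fatou set (so $B_{a,\lambda}(c_-)\notin\overline{T_0}$), together with the trichotomy of Proposition~\ref{preimcorbes} and the Blaschke conjugacy on $\mathcal{A}_{out}$ (Proposition~\ref{conjblas}), I would produce an iterated preimage $E(a,\lambda)$ of $A_0(a,\lambda)$ which does not surround $z=0$, is doubly connected for $\lambda\in\Omega$ (it does not then contain $c_-$), and which separates $V(a,\lambda)$ from $z=0$ --- in the same way that $D_0$ itself sits on the outer side of the annulus $A_0$ that separates $D_0$ from $z=0$. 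Using Lemma~\ref{realannulus} I take inside $A_0(a,\lambda)$ the circle $\sigma_\lambda$ of radius $\left(|\lambda|/|a|\right)^{1/5}$, which depends continuously on $\lambda$, and let $\sigma_\lambda^{E}\subset E(a,\lambda)$ be the component of the appropriate iterated preimage of $\sigma_\lambda$ separating $V(a,\lambda)$ from $z=0$; by Proposition~\ref{preimcorbes} this is a Jordan curve not surrounding $z=0$, and by Corollary~\ref{continuitatanells}, Proposition~\ref{confpinching} and Remark~\ref{remconfpinching} it moves continuously with $\lambda$ off the closed set $\Lambda^{E}=\{\lambda:\ c_-(a,\lambda)\in\sigma_\lambda^{E}\}$, while the full iterated preimage of $\sigma_\lambda$ moves continuously for every $\lambda\in\dis_{\mathcal{C}(a)}^*$.

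Now I let $c_-$ change sides. For $\lambda\in\Omega$, $c_-(a,\lambda)$ lies in the complementary component of $\sigma_\lambda^{E}$ not containing $z=0$, and this persists on a neighbourhood of $\lambda_0$. On the other hand, by Lemma~\ref{realannulus} the annulus $A_0(a,\lambda)$ --- and hence $E(a,\lambda)$ --- collapses as $\lambda\to0$: since $z_0(a,\lambda)\to z_0(a)$, a simple zero of $B_a$ at which $B_a$ is a local biholomorphism, the preimages of $A_0(a,\lambda)$ not surrounding $z=0$ collapse, together with $B_{a,\lambda}$, onto preimages of $z_0(a)$ under $B_a$; whereas $c_-(a,\lambda)\to c_-(a)$, whose $B_a$-orbit never meets $z=0$ (it lies on the boundary of the B\"ottcher domain of $z=0$, as in the proof of Lemma~\ref{convst}) and hence never meets a preimage of $z_0(a)$. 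Therefore, for $\rho>0$ small, every $\lambda$ with $0<|\lambda|\leq\rho$ has $c_-(a,\lambda)$ on the same side of $\sigma_\lambda^{E}$ as $z=0$. As in the proof of Theorem~\ref{thmb0}, $\Lambda^{E}$ is then closed and separates the neighbourhood of $\lambda_0$ inside $\Omega$ from $\dis_\rho^*$, and since it is contained in the open set $\Omega^{b}$ of parameters for which $c_-(a,\lambda)$ lies in an iterated preimage of $A_0(a,\lambda)$ not surrounding $z=0$ (open by Remark~\ref{remconfpinching}), and the combinatorial type of that preimage is locally constant, the component $\Omega'$ of $\Omega^{b}$ bordering the outer boundary of $\Omega$ contains a Jordan curve surrounding $\Omega$ --- hence $\lambda_0$. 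Since $\Omega$ and $\Omega'$ are disjoint, $\lambda_0\notin\Omega'$, so $\Omega'$ is multiply connected; this gives both assertions.

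The step I expect to be the main obstacle is the one in the second paragraph: producing $E$ and checking that when $\lambda$ leaves $\Omega$ through its outer boundary, $c_-(a,\lambda)$ enters a preimage of $A_0(a,\lambda)$ that does \emph{not} surround $z=0$ --- i.e.\ that $\sigma_\lambda^{E}$, rather than a $z=0$-surrounding preimage of $A_0$, is the curve separating $c_-$ from $z=0$ just outside $\Omega$. This requires combining the explicit structure $B_{a,\lambda}^{-1}(T_0)=A_0\cup D_0$, the pinching dichotomy of Proposition~\ref{confpinching}, and the Blaschke model of $\mathcal{A}_{out}$ from Proposition~\ref{conjblas} in order to exclude the alternative escape of $c_-$ into a $z=0$-surrounding preimage of $A_0$.
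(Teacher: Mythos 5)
Your overall scheme (an annular iterated preimage of $A_0$ around the component containing $c_-$, a central curve tracked in $\lambda$, and a crossing/separation argument) is the right one, but the two steps you yourself flag as ``the main obstacle'' are precisely the substance of the proof, and they are not carried out. The existence of the annulus is obtained in the paper not from Proposition~\ref{preimcorbes} alone but from Corollary~\ref{anellsacumulen}: preimages of $A_0$ accumulate on every iterated preimage of $\partial A^*_{a,\lambda}(\infty)$, in particular on $\partial D(a,\lambda_0)$, so one can choose a doubly connected preimage $A(a,\lambda_0)$ surrounding $D(a,\lambda_0)$ but not $z=0$, and (crucially) with no forward image inside $\mathrm{Bdd}(A(a,\lambda_0))$, which is what guarantees that its central curve moves continuously while $c_-$ stays inside; note that non-surrounding preimages of $A_0$ are \emph{not} covered by Lemma~\ref{deforderwell} or Remark~\ref{remconfpinching}, so your curve $\sigma^{E}_\lambda$ has no globally persistent definition and can merge with or split from other components (Proposition~\ref{confpinching}) as $\lambda$ ranges over the whole region between $\Omega$ and $\dis_\rho$. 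Likewise, at the crossing parameter you must \emph{prove} that the Fatou component containing $c_-$ does not surround $z=0$: a priori the crossing could occur in a pinched configuration in which $c_-$ sits on a figure-eight attached to a $z=0$-surrounding annulus. The paper excludes this by using that $D$ contains two preimages $w^1,w^2$ of $z_0$ and splits into $D^1,D^2$ as $c_-$ leaves it, so that at the crossing the pinched curve is $\gamma^1\cup\gamma^2$ with each loop surrounding one $D^i$ and neither surrounding $z=0$, forcing the component to be triply connected and non-surrounding. Your proposal contains no substitute for this analysis.

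The route you take to the ``far side'' condition also does not deliver the stated conclusion. Sending $\lambda\to 0$ and arguing that the non-surrounding preimages collapse onto $B_a$-preimages of $z_0(a)$ (even granting that claim, which itself needs proof and needs $E(a,\lambda)$ to be followed coherently down to small $|\lambda|$) only shows that the crossing set separates $\lambda_0$ from a punctured neighbourhood of $\lambda=0$. A relatively closed set in $\dis_{\mathcal{C}(a)}$ separating $\lambda_0$ from $\dis_\rho^*$ need not surround $\lambda_0$ at all (it can be a crosscut, or a loop around $\lambda=0$ leaving $\lambda_0$ outside), so neither the ``surrounded within $\dis_{\mathcal{C}(a)}$'' conclusion nor the multiple connectivity of $\Omega'$ follows; your claim that $\Omega'$ ``contains a Jordan curve surrounding $\Omega$'' is unjustified. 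Note also that you never use the hypothesis of the second assertion. The paper's argument instead takes an arbitrary parameter $\lambda_{out}$ with $c_-\notin\overline{D(a,\lambda_{out})}$ and runs the first-crossing argument along \emph{every} path from $\lambda_0$ to such parameters, which gives the surrounding statement; and for the multiple connectivity it takes $\lambda_{out}$ inside the hypothesized hyperbolic component $\Omega_{out}\subset\dis_{\mathcal{C}(a)}$ surrounding $\lambda_0$ (e.g.\ a component $\Omega_\Delta$ from Theorem~\ref{thmb0}), so that the crossing components must separate $\lambda_0$ from a set that wraps around it, exactly as in the proof of Theorem~\ref{thmb0}. As it stands, your proposal proves at most the existence of some parameters with $c_-$ in a non-surrounding preimage of $A_0$ between $\Omega$ and $\lambda=0$, not the theorem.
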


\begin{rem}

In Theorem~\ref{thmcaseB}  we say that $\lambda_0$ is surrounded within $\dis_{\mathcal{C}(a)}$. By this we mean the following. Let $\Omega$ be the hyperbolic component which contains $\lambda_0$ and let $\lambda_1\in\dis_{\mathcal{C}(a)}^*\setminus \overline{\Omega}$. Then,  there are hyperbolic components $\Omega'$ such that   any path contained in $\dis_{\mathcal{C}(a)}$ which joins $\lambda_0$ and $\lambda_1$   has non-empty intersection with $\Omega'$. 
\end{rem}

\begin{rem}
We  want to point out that the condition that $\lambda_0$ is surrounded by hyperbolic components $\Omega\subset\dis_{\mathcal{C}(a)}$ is essentially technical. We conjecture that any such $\lambda_0$ is surrounded by a hyperbolic component $\Omega_{\Delta}$ as in Theorem~\ref{thmb0} (see Figure~\ref{param05i}). This condition is somehow equivalent to the condition $s(a,\rho)\prec \Delta$ of Theorem~\ref{thmb0}. They guarantee that the hyperbolic components are contained in $\dis_{\mathcal{C}(a)}^*$. This is important since all proven results depend on the validity of the conclusions of Theorem~\ref{thmcritzeros}, which has as hypothesis $\lambda\in\dis_{\mathcal{C}(a)}^*$. We conjecture that doubly connected hyperbolic components $\Omega_{\Delta}$ accumulate on the boundary of the set of parameters where the conclusions of Theorem~\ref{thmcritzeros} hold.

\end{rem}

\begin{proof}[Proof of Theorem~\ref{thmcaseB}]

The proof is analogous to the one of Theorem~\ref{thmb0}.  Let $a\in\dis^*$ and  $\lambda_0\in\dis_{\mathcal{C}(a)}^*$ such that $c_-(a,\lambda_0)$ belongs to an iterated preimage $D(a,\lambda_0)$ of $D_0(a,\lambda_0)$. Since it contains a critical point, $D(a,\lambda_0)$ is mapped 2-1 onto its image. In particular, it contains 2 preimages of the zero $z_0(a,\lambda)$, say $w^1(a,\lambda)$ and $w^2(a,\lambda)$. When we move the parameter $\lambda$ so that $c_-(a,\lambda)$ exists the set $D(a,\lambda)$, this Fatou component continuously splits into the disjoint union of two preimages of $D_0(a,\lambda)$, say $D^1(a,\lambda)$ and $D^2(a,\lambda)$. The sets $D^1(a,\lambda)$ and $D^2(a,\lambda)$ contain $w^1(a,\lambda)$ and $w^2(a,\lambda)$, respectively.

We know from Corollary~\ref{anellsacumulen} that there are preimages $A_n(a,\lambda_0)$ of $A_0(a,\lambda_0)$ which accumulate on $\partial A_{a,\lambda_0}^*(\infty)$. As a consequence, there are preimages of $A_0(a,\lambda_0)$ which accumulate on any preimage of $\partial A_{a,\lambda_0}^*(\infty)$. In particular, we can take a  multiply connected Fatou component $A(a,\lambda_0)$  which surrounds $D(a,\lambda_0)$ but does not surround $z=0$ (see Figure~\ref{esquemabounded} (left)). Since the free critical point $c_-(a,\lambda_0)$ does not belong to any preimage of $A_0(a,\lambda_0)$, it follows from the Riemann-Hurwitz formula (see Theorem~\ref{riemannhurwitz}) that  $A(a,\lambda_0)$ is indeed doubly connected (compare with the proof of Lemma~\ref{fatouinfinit}). Let $\rm{Bdd(A(a,\lambda_0))}$ denote the set of points bounded by $A(a,\lambda_0)$.  Since $\partial D(a,\lambda_0)$ is preperiodic, we may choose $A(a,\lambda_0)$ so that there is no annulus $A'(a,\lambda_0)\subset \rm{Bdd(A(a,\lambda_0))}$ such that $B^n_{a,\lambda_0}(A(a,\lambda_0))=A'(a,\lambda_0)$ for some $n>0$.

Let $\gamma_{\lambda}\subset A_0(a,\lambda)$ be any Jordan curve which surrounds $z=0$ and depends continuously on $\lambda$ (we can take it since, by Proposition~\ref{contboundary},  $A_0(a,\lambda)$ depends continuously on $\lambda$). Then $\gamma_{\lambda_0}$ has a unique preimage in $A(a,\lambda_0)$, say $\gamma'_{\lambda_0}$. Moreover $\gamma'_{\lambda_0}$ is a Jordan curve. Assume that we can take a closed curve in the parameter plane $\eta:\left[0,1\right]\rightarrow \dis_{\mathcal{C}(a)}$ such that $\eta(0)=\lambda_0$, that $c_-(a,\eta(y))\in\rm{Int}(\gamma'_{\eta(y)})$ for all $y\in (0,1)$, and that $c_-(a,\eta(1))\in\gamma'_{\eta(1)}$.
It follows from the fact that there is no image of $A'(a,\lambda_0)$ in $ \rm{Bdd(A(a,\lambda_0))}$ that $\gamma'_{\eta(y)}$ is a Jordan curve which moves continuously with respect to $y$ for all  $y\in\left[0,1\right)$. At the parameter $\eta(1)$, the curve $\gamma'_{\eta(1)}$ consists of the union of two Jordan curves, $\gamma^1$ and $\gamma^2$, which have the common point $c_-(a,\eta(1))$. Each of the curves $\gamma^1$ and $\gamma^2$ surrounds one of the two Fatou components $D^1(a,\lambda)$ and $D^2(a,\lambda)$ in which the set $D(a,\lambda)$ splits after $c_-(a,\lambda)$ exists it since $B_{a,\lambda}(\rm{Int}(\gamma^1))=B_{a,\lambda}(\rm{Int}(\gamma^2))$.
 Moreover, for all $y\in\left[ 0, 1\right]$,  the curve $\gamma'_{\eta(y)}$ does not surround $z=0$. By Lemma~\ref{fatouinfinit} we know that $A(a,\eta(1))$ is triply connected. The curves $\gamma^1$ and $\gamma^2$ bound two connected components of $\partial A(a,\eta(1))$. Since they do not surround $z=0$, the third connected component of  $\partial A(a,\eta(1))$ is necessarily a Jordan curve which does not surround $z=0$.
 We conclude that $A(a,\eta(1))$ is a triply connected Fatou component which contains the point $c_-(a,\eta(1))$ and does not surround $z=0$ (see Figure~\ref{esquemabounded} (right), c.f.\ Figure~\ref{dynamfigureABC} (d)).

\begin{figure}[hbt!]
    \centering
   
    \def\svgwidth{250pt}
    \subimport{figures/}{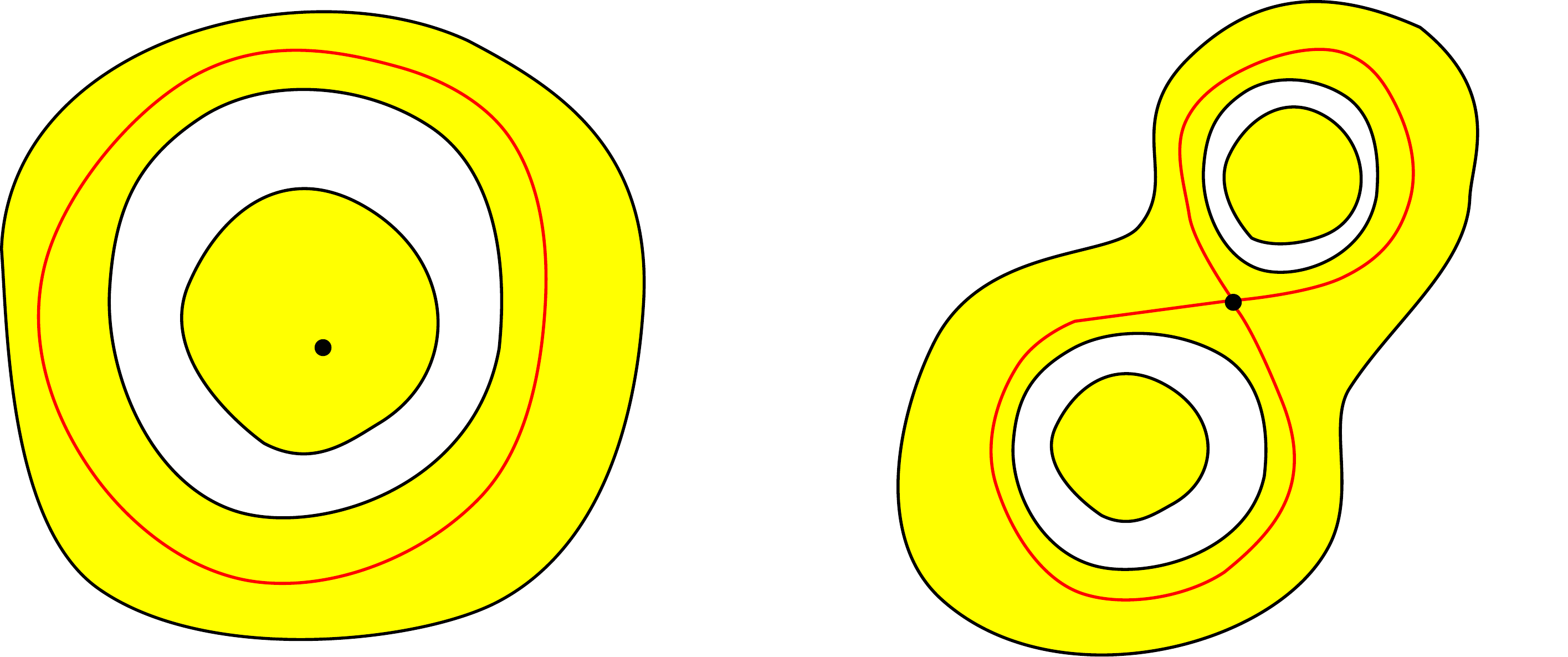_tex}

    \caption{\small Scheme of the  situation described in the proof of Theorem~\ref{thmcaseB}. We draw in red the curve $\gamma_{\eta(y)}^1$ for $\eta(0)=\lambda_0$ (left) and $\eta(1)$ (right). }

    \label{esquemabounded}
\end{figure}

To finish the proof  we need to find the curve of parameters $\eta$. Take a parameter $\lambda_{out}$  such that $c_-\notin \overline{ D(a,\lambda_{out})}$ (for example, take $\lambda_{out}$ in some hyperbolic component $\Omega_{\Delta}$ as in Theorem~\ref{thmb0}). By redefining the set $A(a,\lambda)$, we may assume that $A(a,\lambda_{out})$ does not bound the critical point $c_-(a,\lambda_{out})$. Then given any curve $\zeta$ on the parameter plane joining $\lambda_0$ and $\lambda_{out}$,  there is a curve $\eta\subset \zeta$ satisfying the previous conditions. Finally, following the same reasoning used in the proof of Theorem~\ref{thmb0}, we can conclude that there is a hyperbolic component $\Omega$ which surrounds $\lambda_0$ in $\dis_{\mathcal{C}(a)}$ such that if $\lambda\in\Omega$ then $c_-(a,\lambda)\in A(a,\lambda)$ and the multiply connected Fatou component $A(a,\lambda)$ does not surround $z=0$. Moreover, if $\lambda_{out}$ belongs to a multiply connected hyperbolic component $\Omega_{out}\subset \dis_{\mathcal{C}(a)}$ which surrounds $\lambda_0$, then $\Omega$ is a multiply connected hyperbolic component which separates $\Omega_{out}$ and $\lambda_0$.

\end{proof}

\bibliography{bibliografia}
\bibliographystyle{AIMS}

\end{document}